\numberwithin{table}{section}
\definecolor{liens}{rgb}{1,0,0}
\definecolor{test}{rgb}{1,0,1}
\definecolor{armygreen}{rgb}{0.2, .8, 0.6}
\definecolor{burntumber}{rgb}{0.54, 0.2, 0.14}
\definecolor{dartmouthgreen}{rgb}{0, 0.7, 0.8}
\definecolor{deeppink}{rgb}{1.0, 0.08, 0.58}
\newcommand{\mfs}{\color{black}}
\newcommand{\mfss}{\color{black}}
\newtheorem{thm}{Theorem}[section]
\newtheorem{lemma}[thm]{Lemma}
\newtheorem{lem}[thm]{Lemma}
\newtheorem{prop}[thm]{Proposition}
\newtheorem{defn}[thm]{Definition}
\newtheorem{define}[thm]{Definition} 
\newenvironment{prf}[1]{\trivlist
\item[\hskip \labelsep{\bf #1.\hspace*{.3em}}]}{~\hspace{\fill}~$\square$\endtrivlist}
\newtheorem{rmk}[thm]{Remark}
\newtheorem{rem}[thm]{Remark} 
\theoremstyle{remark}
\newtheorem{exa}[thm]{Example}
\numberwithin{equation}{section}
\def\Res{\operatorname{Res}}
\def\Z{\mathbb{Z}}
\def\C{\mathbb{C}}
\def\R{\mathbb{R}}
\def\Q{\mathbb{Q}}
\def\P1{\mathbb{P}^{1}}
\def\beq{\begin{equation}}
\def\eeq{\end{equation}}
\def\Etproj{\overline{E_t}}
\def\Scal{\mathcal{S}}
\def\Pcal{\mathcal{P}}
\def\Qcal{\mathcal{Q}}
\def\Ocal{\mathcal{O}}
\def\CX{{\mathbb C}}
\def\QX{{\mathbb Q}}
\def\NX{{\mathbb N}}
\def\ZX{{\mathbb Z}}
\def\PX{{\mathbb P}}
\def\ores{{\rm ores}}
\def\calD{{\mathcal{D}}}
\def\calL{{\mathcal{L}}}
\def\ty{{\tilde{y}}}
\def\hh{{\widehat{h}}}
\def\ita{{\iota_1}}
\def\itb{{\iota_2}}
\def\htilde{{\tilde{h}}}
\def\gtilde{{\tilde{g}}}
\def\hhat{{\hat{h}}}
\newcommand{\walk}[1]{w_{\textrm{#1}}}
\def\tEtproj{{\widetilde{E_t}}}
\newcommand{\charl}[1]{\textcolor{blue}{ #1}}
\def\tx{\widetilde{x}}
\def\ty{\widetilde{y}}
\def\a{\alpha}
\def\b{\beta}
\def\c{\gamma}
\tikzset{>=stealth}
\tikzset{map/.style={row sep=0em, column sep=0em}}
\tikzset{dot/.style={circle,fill=black,minimum size=5pt,inner sep=0pt}}
\tikzstyle directed=[postaction={decorate,decoration={markings, mark=at position .5 with {\arrow{stealth}}}}] 
\begin{document}

\title{On differentially algebraic generating series for walks in the quarter plane}

\author{Charlotte Hardouin}
\address{Universit\'e Paul Sabatier - Institut de Math\'ematiques de Toulouse, 118 route de Narbonne, 31062 Toulouse.}
\email{hardouin@math.univ-toulouse.fr}
\author{Michael F. Singer }
\address{Department of Mathematics, North Carolina State University,
Box 8205, Raleigh, NC 27695-8205, USA}
\email{singer@math.ncsu.edu}

\keywords{Lattice Walks, {\mfs Mordell-Weil Lattices, Kodaira-N\'eron Model,} Random Walks, Generating Functions, Difference Galois theory, Elliptic functions, Transcendence, {\mfs N\'eron-Tate Height}}

\thanks{This project has received funding from the European Research Council (ERC) under the European Union's Horizon 2020 research and innovation programme under the Grant Agreement No 648132. The {first} author would like to thank the ANR-11-LABX-0040-CIMI within
the program ANR-11-IDEX-0002-0 for its partial support. The {first} author's work is also supported by ANR De rerum natura (ANR-19-CE40-0018) . The work of the second author was partially supported by a grant from the Simons Foundation (\#349357, Michael Singer). {\mfss Both authors would like to thank the Mathematical Sciences Research Institute for sponsoring a visit during which significant progress on this paer was made.}
 The first author would like to thank Marcello Bernardara, Thomas Dedieu and  Stephane Lamy  for many discussions and references on elliptic surfaces.
 }

 \subjclass[2010]{05A15, 11G05, 30D05, 39A06}
\date{October 2, 2020}

\bibliographystyle{amsalpha}

\begin{abstract} We refine necessary and sufficient conditions for the generating series of a weighted model of a quarter plane walk to be differentially algebraic. In addition, we give algorithms  based on the theory of Mordell-Weil lattices,  that, for each weighted model, yield polynomial conditions on the weights  determining this property of  the associated generating series.
\end{abstract}
\maketitle
\setcounter{tocdepth}{1}
\tableofcontents
\pagestyle{myheadings}
\markboth{C.~HARDOUIN, M.F.~SINGER}{DIFFERENTIAL ALGEBRAIC GENERATING SERIES}


\section{Introduction}
The enumeration of planar lattice walks confined to the {\mfs first} quadrant has {\mfs attracted a  considerable amount}  of interest over the past fifteen years.  For the lattice $\Z^2$, a  lattice path model is comprised of a finite set $\mathcal{D}$ of lattice vectors called the \emph{step set} together with a starting point $P \in \Z^2$. The combinatorial question boils down to the count $q_{i,j}(n)$ of $n$-step walks, i.e., of  polygonal chains, that remain in the first  quadrant,  starting from $P$, 
ending at  $(i,j)$ and  consisting of $n$ oriented  line segments whose associated translation vectors belong to $\mathcal{D}$. This question is ubiquitous since lattice walks encode several classes of mathematical objects, in discrete mathematics (permutations, trees, planar maps), in probability theory (lucky games, sums of discrete random variables), statistics (non-parametric tests). We refer to the introduction of \cite{BanderierFlajolet} for more details on these applications {\mfs as well as \cite{KH2010} for applications in other scientific areas.}

{\mfs Many algebraic  and analytic} properties of the  combinatorial sequence of a  lattice  walk  are  embodied in the algebraic nature of the associated generating function. For instance, 
for the lattice $\Z^2$, the linear recurrences satisfied by the sequence $(q_{i,j}(n))_{i,j,n}$ corresponds to the fact that the generating function \begin{equation} \label{genfnc} Q(x,y,t)= \sum_{i,j,n \geq 0} q_{i,j}(n) x^iy^jt^n \end{equation} is $D$-finite, that is, satisfies linear differential equations in the derivation with respect to $x,y$ and $t$. This correspondence yields a classification of the generating series {\mfs as to:} algebraic functions over $\Q(x,y,t)$, $D$-finite functions, differentially algebraic  functions ({\mfs those satisfying} a polynomial relations with their derivatives) and differentially transcendental functions. Recently, the works of many authors  led to a complete classification {\mfs of generating series associated to} lattice walks  with small steps,  that is, with step set $\calD \subset \{-1,0,1\}^2 \backslash \{(0,0)\}$. These works  combine a wide 
variety of technics: singularity analysis via the Kernel Method, probabilistic method, guess and proof strategies and Galois theory of functional equations. Many researchers have contributed answers to these questions and our brief exposition below does not do justice to these contributions.  Nonetheless, since detailed descriptions of these various contributions exist elsewhere (see for example \cite{BBMR17, DHRS, DHRS19}) we will limit ourselves to a brief summary. 

Of the $2^8-1$ possible choices of step sets it is shown in \cite{BMM} that taking symetries into account and eliminating trivial sets, one need only consider  $79$ of these \charl{models}.  Of these, $23$   models have $D$-finite (in all variables)  generating series (\cite{BMM, BostanKauersTheCompleteGenerating}) of which $4$ are algebraic. The remaining $56$  models were shown to have non-$D$-finite generating series with respect to various variables in  \cite{KurkRasch,MR09,MelcMish, BRS}. In \cite{BBMR17, DHRS, DHRS19, DreyRasc}, the more general question of differential transcendence is addressed.  In \cite{BBMR17}, the authors give new uniform proofs of the $4$  algebraic cases  and also show that $9$ (see Figure~\ref{figure:ex})  of the $56$  non-$D$-finite models in fact have differentially algebraic generating functions. Using criteria from the Galois theory of difference equations, the authors  of \cite{DHRS,DHRS19} show that $47$ of the $56$ non-$D$-finite  models  have differentially transcendental generating functions  and reproved the fact from \cite{BBMR17} that the remaining  $9$  are differentially algebraic. (Figure~\ref{figure:ex} below reproduces Figure 2 of \cite{DHRS} with a table comparing the notations of \cite[Table 2]{BBMR17} and \cite[Figure 2]{DHRS}).
\begin{figure}[h!]
$$
\underset{\walk{IIB.1}}{\begin{tikzpicture}[scale=.4, baseline=(current bounding box.center)]
\foreach \x in {-1,0,1} \foreach \y in {-1,0,1} \fill(\x,\y) circle[radius=2pt];
\draw[thick,->](0,0)--(0,1);
\draw[thick,->](0,0)--(1,0);
\draw[thick,->](0,0)--(-1,-1);
\draw[thick,->](0,0)--(0,-1);
\end{tikzpicture}}
\quad 
\underset{\walk{IIB.2}}{\begin{tikzpicture}[scale=.4, baseline=(current bounding box.center)]
\foreach \x in {-1,0,1} \foreach \y in {-1,0,1} \fill(\x,\y) circle[radius=2pt];
\draw[thick,->](0,0)--(0,1);
\draw[thick,->](0,0)--(1,0);
\draw[thick,->](0,0)--(-1,-1);
\draw[thick,->](0,0)--(1,-1);
\end{tikzpicture}
}
\quad
\underset{\walk{IIC.1}}{\begin{tikzpicture}[scale=.4, baseline=(current bounding box.center)]
\foreach \x in {-1,0,1} \foreach \y in {-1,0,1} \fill(\x,\y) circle[radius=2pt];
\draw[thick,->](0,0)--(0,1);
\draw[thick,->](0,0)--(1,1);
\draw[thick,->](0,0)--(-1,0);
\draw[thick,->](0,0)--(0,-1);
\end{tikzpicture}
}
\quad
\underset{\walk{IIB.3}}{\begin{tikzpicture}[scale=.4, baseline=(current bounding box.center)]
\foreach \x in {-1,0,1} \foreach \y in {-1,0,1} \fill(\x,\y) circle[radius=2pt];
\draw[thick,->](0,0)--(0,1);
\draw[thick,->](0,0)--(-1,0);
\draw[thick,->](0,0)--(1,0);
\draw[thick,->](0,0)--(1,-1);
\end{tikzpicture}}
\quad
\underset{\walk{IIC.4}}{\begin{tikzpicture}[scale=.4, baseline=(current bounding box.center)]
\foreach \x in {-1,0,1} \foreach \y in {-1,0,1} \fill(\x,\y) circle[radius=2pt];
\draw[thick,->](0,0)--(0,1);
\draw[thick,->](0,0)--(1,1);
\draw[thick,->](0,0)--(-1,0);
\draw[thick,->](0,0)--(1,0);
\draw[thick,->](0,0)--(-1,-1);
\end{tikzpicture}}
\quad
\underset{\walk{IIC.2}}{\begin{tikzpicture}[scale=.4, baseline=(current bounding box.center)]
\foreach \x in {-1,0,1} \foreach \y in {-1,0,1} \fill(\x,\y) circle[radius=2pt];
\draw[thick,->](0,0)--(0,1);
\draw[thick,->](0,0)--(1,1);
\draw[thick,->](0,0)--(-1,0);
\draw[thick,->](0,0)--(-1,-1);
\draw[thick,->](0,0)--(0,-1);
\end{tikzpicture}}
\quad
\underset{\walk{IIB.6}}{\begin{tikzpicture}[scale=.4, baseline=(current bounding box.center)]
\foreach \x in {-1,0,1} \foreach \y in {-1,0,1} \fill(\x,\y) circle[radius=2pt];
\draw[thick,->](0,0)--(0,1);
\draw[thick,->](0,0)--(1,0);
\draw[thick,->](0,0)--(-1,-1);
\draw[thick,->](0,0)--(0,-1);
\draw[thick,->](0,0)--(1,-1);
\end{tikzpicture}}
\quad 
\underset{\walk{IIC.5}}{\begin{tikzpicture}[scale=.4, baseline=(current bounding box.center)]
\foreach \x in {-1,0,1} \foreach \y in {-1,0,1} \fill(\x,\y) circle[radius=2pt];
\draw[thick,->](0,0)--(0,1);
\draw[thick,->](0,0)--(1,1);
\draw[thick,->](0,0)--(-1,0);
\draw[thick,->](0,0)--(1,0);
\draw[thick,->](0,0)--(0,-1);
\end{tikzpicture}}
\quad
\underset{\walk{IIB.7}}{\begin{tikzpicture}[scale=.4, baseline=(current bounding box.center)]
\foreach \x in {-1,0,1} \foreach \y in {-1,0,1} \fill(\x,\y) circle[radius=2pt];
\draw[thick,->](0,0)--(-1,1);
\draw[thick,->](0,0)--(0,1);
\draw[thick,->](0,0)--(1,0);
\draw[thick,->](0,0)--(0,-1);
\draw[thick,->](0,0)--(1,-1);
\end{tikzpicture}}
$$
\vspace{.2in}
$$
\begin{tabular}{|c|c|}
  \hline
\cite[Tab 2]{BBMR17} & \cite[Fig. 2]{DHRS} \\
  \hline
  1 & $\walk{IIB.1}$ (after $x\leftrightarrow y$)  \\
  2 & $\walk{IIB.2}$ (after $x\leftrightarrow y$)  \\
  3 & $\walk{IIC.1}$  \\
  4 & $\walk{IIB.3}$  \\
  5 & $\walk{IIC.4}$ \\
  6 & $\walk{IIC.2}$  \\
  7 & $\walk{IIB.6}$ (after $x\leftrightarrow y$)  \\
  8 & $\walk{IIC.5}$  \\
  9 & $\walk{IIB.7}$  \\
  \hline
\end{tabular}
$$
\caption{The 9 non-$D$-finite  models that have $D$-algebraic generating series together with a table comparing notations of \cite{BBMR17} and \cite{DHRS}}
\label{figure:ex}
\end{figure}

At the core of all these works, one finds two geometric objects: an algebraic curve defined over $\Q(t)$ called the \emph{kernel curve} of genus $0$ or $1$ and  a group of automorphisms of the curve called \emph{the group of the walk}. Though the finiteness of the group had been clearly 
related to the $D$-finiteness of the generating function, no combinatorial as well as geometric criteria had been proposed
to characterize the differential  algebraicity of the generating function.  \cite{DHRS} proposed  a criteria  based on the computation of residues of elliptic functions  and \cite{BBMR17} discovered the more algebraic notion of \emph{decoupled model} by a case-by-case analysis of the nine models of Figure~\ref{figure:ex}.The  notion of decoupled model allowed the authors of  \cite{BBMR17} to give an explicit expression of the generating function, which led to an explicit differential algebraic equation.

 {\mfs The study of walks with multiple steps or weighted walks (that is, lattice walks whose steps have been endowed with weights) yielded a more fecund understanding of these criteria.}  When all these weights are equal, a rescaling allows one to consider them all equal to $1$ whence the terminology \emph{unweighted model} to denote now  the $2^8-1$ models  introduced in the above paragraphs. The need for a classification of weighted walks confined in the quadrant arose in the classification of three dimensional walks confined in the octant. As shown in \cite{BoBMKaMe-16}, some of these three dimensional models can be reduced by projection to two-dimensional models with weights. Similarly to unweighted models,  one attaches to a weighted model a kernel  curve of genus zero or one and a group of automorphisms of this curve. When the group is finite, \cite[Cor.43]{DreyRasc} proves that the generating function is $D$-finite. When  the kernel curve is  of genus zero, the generating function is differentially transcendental by \cite{DHRS19}. The case of a kernel curve of genus one remained  open untill now and only some partial cases were treated. In \cite{DreyRasc}, the authors  proved the differential transcendence of the generating function for some classes of walks. In  \cite{KauersYatchak} and \cite{CourtielMelczerMishnaRaschel}, the authors study families of weighted models with finite group and the algebraicity of their generating functions.

 In this paper, we focus on weighted models with small steps. For these models, we  unify the approaches of \cite{DHRS} and \cite{BBMR17} and to show that a weighted model is decoupled if and only if its generating function is differentially algebraic. Moreover we  translate  the combinatorial question of the differential algebraicity   of the generating function in the purely arithmetic question of the linear dependence of two given points of the Mordell-Weil group of the kernel curve. Previous works had considered the kernel curve as a fixed elliptic curve by choosing a value of $t$, even transcendental over $\Q$. The novelty of our strategy is that we allow $t$ to vary so that we work with a pencil of elliptic  curves or equivalently with a rational surface whose general fiber is the kernel curve.  Relying on the theory of Mordell-Weil lattices and their classification for rational elliptic surfaces (see for instance \cite{SchuttShiodaBook}), we  construct an algorithm which given a weighted model determines the polynomial relations between the weights that correspond to a differentially algebraic generating function. For instance, for  the weighted  model 
$$
{\begin{tikzpicture}[scale=.4, baseline=(current bounding box.center)]
\foreach \x in {-1,0,1} \foreach \y in {-1,0,1} \fill(\x,\y) circle[radius=2pt];
\draw[thick,->](0,0)--(0,1);
\draw[thick,->](0,0)--(1,1);
\draw[thick,->](0,0)--(-1,0);
\draw[thick,->](0,0)--(-1,-1);
\draw[thick,->](0,0)--(0,-1);
\end{tikzpicture}}
$$
with nonzero weights $d_{1,1}, d_{0.-1}, d_{-1,-1}, d_{-1,0}, d_{0,1},d_{0,0}$,  the associated generating series is differentially algebraic if and only if 
$$ d_{0,1}d_{0,-1} - d_{1,1} d_{-1,-1} = 0.$$
 This relation is automatically satisfied when all the weights are equal to one so that the corresponding model $w_{IIC2}$ is one of the nine differentially algebraic models of Figure~\ref{figure:ex}. This shows that these nine cases are  {\mfs coincidences; they are just the only weighted models for which the weights equal to one satisfy the polynomial equations guaranteeing the differential algebraicity.}
 
 This geometric strategy  has therefore a combinatorial interest since it builds a bridge between the combinatorics of the walks and the combinatorics of the Mordell-Weil lattices. For walks in the first quadrant, the nature of the Mordell-Weil lattice is controlled by the relative position of the base points of the pencil of kernel curves.   This arithmetic point of view  might be also  well suited to attack the question of the specialization of the variable $t$ since it might be translated in terms of specialization of  independent points of the general fiber of an elliptic surface to linearly dependent points in a specialized fiber.
 
 The rest of the paper is organized as follows. In Section ~\ref{sec:ker} we review the notions of the kernel of a walk and the group of a walk. In Section ~\ref{certificate} we show that the criteria of  \cite{BBMR17} and \cite{DHRS}  are equivalent.  In Section~\ref{sec:rescrit} we simplify the latter criteria of \cite{DHRS}  by showing they are equivalent to showing that two poles of a certain function lie in the same orbit under an action already considered in \cite{DHRS}. Combining this with ideas from the theory of elliptic surfaces, we give, in Section~\ref{sec:Dalg} an algorithm  and some refinements which allow one to characterize in terms of polynomial relations those weights for which the generating series are  differentially  algebraic. {\mfs In Section~\ref{sec:algo}, we present some basic facts concerning the Kodaira-N\'eron Model of our family of elliptic curves, its Mordell-Weill lattice and N\'eron-Tate heights and present an algorithm which, once these are facts are accepted, reduces the computation of these polynomial conditions to  the calculation of an associated Weierstass equation and simple arithmetic. In Section~\ref{sec:refine}, we give a more detailed description of these objects and concepts, yielding a significant refinement of the algorithm.}  In Appendix~\ref{appendix:PR} we recall some facts concerning local parameters, poles and the notion of orbit residue introduced in \cite{DHRS}.

\section{Kernel curve and group of the walk}\label{sec:ker} From now on, we will fix a set of steps $\calD$ and weights $\{d_{i,j}\}$.We also  fix once and for all a value of $t$, transcendental over $\QX$ and occasionally suppress the symbol $t$ in  our notation. 
All studies concerning the behavior of the generating series (\ref{genfnc}) begin with the functional equation it satisfies (c.f.,~\cite{BMM}).  One first defines a Laurent polynomial called the {\it inventory} of the step set $\calD $
\begin{align}\label{inventory}
S(x,y) &:= \sum_{(i,j) \in \calD} d_{i,j} x^iy^j
\end{align} 
and a polynomial called the {\it kernel} of the walk
\begin{align}\label{kernel}
K(x,y,t) &:= xy(1-tS(x,y)).
\end{align} 
One then has that $Q(x,y,t)$ satisfies
\begin{align}\label{fnceqn}
K(x,y,t) Q(x,y,t) &= xy - F^1(x,t) - F^2(y,t) +td_{-1,-1}Q(0,0,t)
\end{align}
where 
\begin{align} \label{F1}
F^1(x,t)  &:= -K(x,0,t)Q(x,0,t) \ \ \ \ \   \mbox{and}  &F^2(y,t):= -K(0,y,t)Q(0,y,t).
\end{align}

\subsection{The Curve} The equation $K(x,y) = 0$ defines an affine curve $E_t$ in $\CX \times \CX$. As  in \cite{DHRS, DHRS19}, it is useful to consider a compactification $\Etproj$ of this curve in $\PX^1(\CX)\times\PX^1(\CX)$. This curve is defined by homogenizing each variable separately in $K(x,y)$, that is, 
\begin{defn} The {\rm kernel curve} associated to a quadrant model  is the curve 
\[ \Etproj = \{ ([x_0:x_1], [y_0:y_1] ) \in \PX^1(\CX)\times\PX^1(\CX) \ | \ \overline{K}(x_0,x_1,y_0,y_1,t) = 0\}\]

where $ \overline{K}(x_0,x_1,y_0,y_1,t)$ is the following bihomogeneous polynomial
\begin{equation}\label{eq:kernelwalk}
\overline{K}(x_0,x_1,y_0,y_1,t)={x_1^2y_1^2K(\frac{x_0}{x_1},\frac{y_0}{y_1},t)}= x_0x_1y_0y_1 -t \sum_{i,j=0}^2 d_{i-1,j-1} x_0^{i} x_1^{2-i}y_0^j y_1^{2-j}. 
 \end{equation}
 \end{defn}

  {The reducibility of $K(x,y,t)$  as an element of $\CX[x,y]$ can be expressed as a condition on  the set of steps of the model (see \cite[Lemma 2.3.2]{FIM} for $t=1$ and \cite[Proposition 1.2]{DHRS20}). The walks having reducible
kernel polynomials are called degenerate and their generating series is algebraic.  Thus, we will discard these cases  and  will assume that $K(x,y,t)$ is irreducible.}  In this case, the polynomial $K(x,y,t)$ has degree $2$ in each of its variables $x$ and $y$ and if $\Etproj$ is nonsingular it is of genus $1$, otherwise it has genus $0$.  The genus zero curves correspond to $28$ sets of steps  \cite[Cor. 2.6]{DHRS20}. Up to symmetry and discarding the sets of steps which do not enter the first quadrant, one can only focus on the five following set of steps 
$$\begin{tikzpicture}[scale=.2, baseline=(current bounding box.center)]
\foreach \x in {-1,0,1} \foreach \y in {-1,0,1} \fill(\x,\y) circle[radius=2pt];
\draw[thick,->](0,0)--(-1,1);
\draw[thick,->](0,0)--(0,1);
\draw[thick,->](0,0)--(1,-1);
\end{tikzpicture}
\quad 
 \begin{tikzpicture}[scale=.2, baseline=(current bounding box.center)]
\foreach \x in {-1,0,1} \foreach \y in {-1,0,1} \fill(\x,\y) circle[radius=2pt];
\draw[thick,->](0,0)--(-1,1);
\draw[thick,->](0,0)--(1,0);
\draw[thick,->](0,0)--(1,-1);
\end{tikzpicture}
\quad 
\begin{tikzpicture}[scale=.2, baseline=(current bounding box.center)]
\foreach \x in {-1,0,1} \foreach \y in {-1,0,1} \fill(\x,\y) circle[radius=2pt];
\draw[thick,->](0,0)--(-1,1);
\draw[thick,->](0,0)--(1,1);
\draw[thick,->](0,0)--(1,-1);
\end{tikzpicture}
\quad 
 \begin{tikzpicture}[scale=.2, baseline=(current bounding box.center)]
\foreach \x in {-1,0,1} \foreach \y in {-1,0,1} \fill(\x,\y) circle[radius=2pt];
\draw[thick,->](0,0)--(-1,1);
\draw[thick,->](0,0)--(0,1);
\draw[thick,->](0,0)--(1,1);
\draw[thick,->](0,0)--(1,-1);
\end{tikzpicture}
\quad 
\begin{tikzpicture}[scale=.2, baseline=(current bounding box.center)]
\foreach \x in {-1,0,1} \foreach \y in {-1,0,1} \fill(\x,\y) circle[radius=2pt];
\draw[thick,->](0,0)--(-1,1);
\draw[thick,->](0,0)--(1,0);
\draw[thick,->](0,0)--(1,1);
\draw[thick,->](0,0)--(1,-1);
\end{tikzpicture}
\quad 
\begin{tikzpicture}[scale=.2, baseline=(current bounding box.center)]
\foreach \x in {-1,0,1} \foreach \y in {-1,0,1} \fill(\x,\y) circle[radius=2pt];
\draw[thick,->](0,0)--(-1,1);
\draw[thick,->](0,0)--(0,1);
\draw[thick,->](0,0)--(1,1);
\draw[thick,->](0,0)--(1,0);
\draw[thick,->](0,0)--(1,-1);
\end{tikzpicture}
\quad 
\begin{tikzpicture}[scale=.2, baseline=(current bounding box.center)]
\foreach \x in {-1,0,1} \foreach \y in {-1,0,1} \fill(\x,\y) circle[radius=2pt];
\draw[thick,->](0,0)--(-1,1);
\draw[thick,->](0,0)--(0,1);
\draw[thick,->](0,0)--(1,0);
\draw[thick,->](0,0)--(1,-1);
\end{tikzpicture}$$
{The main result of \cite{DHRS19} is to show that the generating series of any  weighted model attached to one of the above set of steps is differentially transcendental. Thus in the whole paper, we will always assume that the model of our walk corresponds to a genus one curve, that is according to \cite[Cor.2.6]{DHRS20}, we will focus on the weighted models of the following set of steps.}

\begin{equation}\label{stepgenus1}\tag{G1}
\begin{array}{llllllllllllllll}
\begin{tikzpicture}[scale=.2, baseline=(current bounding box.center)]
\foreach \x in {-1,0,1} \foreach \y in {-1,0,1} \fill(\x,\y) circle[radius=2pt];
\draw[thick,->](0,0)--(1,1);
\draw[thick,->](0,0)--(-1,0);
\draw[thick,->](0,0)--(0,-1);
\end{tikzpicture}
& 
\begin{tikzpicture}[scale=.2, baseline=(current bounding box.center)]
\foreach \x in {-1,0,1} \foreach \y in {-1,0,1} \fill(\x,\y) circle[radius=2pt];
\draw[thick,->](0,0)--(0,1);
\draw[thick,->](0,0)--(1,0);
\draw[thick,->](0,0)--(-1,-1);
\end{tikzpicture} 
&
\begin{tikzpicture}[scale=.2, baseline=(current bounding box.center)]
\foreach \x in {-1,0,1} \foreach \y in {-1,0,1} \fill(\x,\y) circle[radius=2pt];
\draw[thick,->](0,0)--(0,1);
\draw[thick,->](0,0)--(1,1);
\draw[thick,->](0,0)--(-1,0);
\draw[thick,->](0,0)--(1,0);
\draw[thick,->](0,0)--(-1,-1);
\draw[thick,->](0,0)--(0,-1);
\end{tikzpicture}
&
\begin{tikzpicture}[scale=.2, baseline=(current bounding box.center)]
\foreach \x in {-1,0,1} \foreach \y in {-1,0,1} \fill(\x,\y) circle[radius=2pt];
\draw[thick,->](0,0)--(1,1);
\draw[thick,->](0,0)--(-1,-1);
\draw[thick,->](0,0)--(1,0);
\draw[thick,->](0,0)--(-1,0);
\end{tikzpicture}
&
\begin{tikzpicture}[scale=.2, baseline=(current bounding box.center)]
\foreach \x in {-1,0,1} \foreach \y in {-1,0,1} \fill(\x,\y) circle[radius=2pt];
\draw[thick,->](0,0)--(1,1);
\draw[thick,->](0,0)--(-1,-1);
\draw[thick,->](0,0)--(0,1);
\draw[thick,->](0,0)--(0,-1);
\end{tikzpicture}
&
\begin{tikzpicture}[scale=.2, baseline=(current bounding box.center)]
\foreach \x in {-1,0,1} \foreach \y in {-1,0,1} \fill(\x,\y) circle[radius=2pt];
\draw[thick,->](0,0)--(0,1);
\draw[thick,->](0,0)--(-1,0);
\draw[thick,->](0,0)--(1,0);
\draw[thick,->](0,0)--(0,-1);
\end{tikzpicture}
&
\begin{tikzpicture}[scale=.2, baseline=(current bounding box.center)]
\foreach \x in {-1,0,1} \foreach \y in {-1,0,1} \fill(\x,\y) circle[radius=2pt];
\draw[thick,->](0,0)--(-1,1);
\draw[thick,->](0,0)--(1,1);
\draw[thick,->](0,0)--(-1,-1);
\draw[thick,->](0,0)--(1,-1);
\end{tikzpicture}
&
\begin{tikzpicture}[scale=.2, baseline=(current bounding box.center)]
\foreach \x in {-1,0,1} \foreach \y in {-1,0,1} \fill(\x,\y) circle[radius=2pt];
\draw[thick,->](0,0)--(-1,1);
\draw[thick,->](0,0)--(1,0);
\draw[thick,->](0,0)--(1,1);
\draw[thick,->](0,0)--(-1,-1);
\draw[thick,->](0,0)--(-1,0);
\draw[thick,->](0,0)--(1,-1);
\end{tikzpicture}
&
\begin{tikzpicture}[scale=.2, baseline=(current bounding box.center)]
\foreach \x in {-1,0,1} \foreach \y in {-1,0,1} \fill(\x,\y) circle[radius=2pt];
\draw[thick,->](0,0)--(-1,1);
\draw[thick,->](0,0)--(0,1);
\draw[thick,->](0,0)--(1,1);
\draw[thick,->](0,0)--(-1,-1);
\draw[thick,->](0,0)--(0,-1);
\draw[thick,->](0,0)--(1,-1);
\end{tikzpicture}
&
\begin{tikzpicture}[scale=.2, baseline=(current bounding box.center)]
\foreach \x in {-1,0,1} \foreach \y in {-1,0,1} \fill(\x,\y) circle[radius=2pt];
\draw[thick,->](0,0)--(-1,1);
\draw[thick,->](0,0)--(0,1);
\draw[thick,->](0,0)--(1,1);
\draw[thick,->](0,0)--(-1,0);
\draw[thick,->](0,0)--(1,0);
\draw[thick,->](0,0)--(-1,-1);
\draw[thick,->](0,0)--(0,-1);
\draw[thick,->](0,0)--(1,-1);
\end{tikzpicture}
&
\begin{tikzpicture}[scale=.2, baseline=(current bounding box.center)]
\foreach \x in {-1,0,1} \foreach \y in {-1,0,1} \fill(\x,\y) circle[radius=2pt];
\draw[thick,->](0,0)--(-1,1);
\draw[thick,->](0,0)--(1,1);
\draw[thick,->](0,0)--(0,-1);
\end{tikzpicture}
&
\begin{tikzpicture}[scale=.2, baseline=(current bounding box.center)]
\foreach \x in {-1,0,1} \foreach \y in {-1,0,1} \fill(\x,\y) circle[radius=2pt];
\draw[thick,->](0,0)--(1,-1);
\draw[thick,->](0,0)--(1,1);
\draw[thick,->](0,0)--(-1,0);
\end{tikzpicture}
&
\begin{tikzpicture}[scale=.2, baseline=(current bounding box.center)]
\foreach \x in {-1,0,1} \foreach \y in {-1,0,1} \fill(\x,\y) circle[radius=2pt];
\draw[thick,->](0,0)--(-1,1);
\draw[thick,->](0,0)--(1,1);
\draw[thick,->](0,0)--(-1,0);
\draw[thick,->](0,0)--(1,0);
\draw[thick,->](0,0)--(0,-1);
\end{tikzpicture}
&
\begin{tikzpicture}[scale=.2, baseline=(current bounding box.center)]
\foreach \x in {-1,0,1} \foreach \y in {-1,0,1} \fill(\x,\y) circle[radius=2pt];
\draw[thick,->](0,0)--(1,-1);
\draw[thick,->](0,0)--(1,1);
\draw[thick,->](0,0)--(-1,0);
\draw[thick,->](0,0)--(0,1);
\draw[thick,->](0,0)--(0,-1);
\end{tikzpicture}
&
\begin{tikzpicture}[scale=.2, baseline=(current bounding box.center)]
\foreach \x in {-1,0,1} \foreach \y in {-1,0,1} \fill(\x,\y) circle[radius=2pt];
\draw[thick,->](0,0)--(-1,1);
\draw[thick,->](0,0)--(0,1);
\draw[thick,->](0,0)--(1,1);
\draw[thick,->](0,0)--(0,-1);
\end{tikzpicture}
&
\begin{tikzpicture}[scale=.2, baseline=(current bounding box.center)]
\foreach \x in {-1,0,1} \foreach \y in {-1,0,1} \fill(\x,\y) circle[radius=2pt];
\draw[thick,->](0,0)--(1,1);
\draw[thick,->](0,0)--(1,0);
\draw[thick,->](0,0)--(-1,0);
\draw[thick,->](0,0)--(1,-1);
\end{tikzpicture}
\\
\begin{tikzpicture}[scale=.2, baseline=(current bounding box.center)]
\foreach \x in {-1,0,1} \foreach \y in {-1,0,1} \fill(\x,\y) circle[radius=2pt];
\draw[thick,->](0,0)--(-1,1);
\draw[thick,->](0,0)--(0,1);
\draw[thick,->](0,0)--(1,1);
\draw[thick,->](0,0)--(-1,0);
\draw[thick,->](0,0)--(1,0);
\draw[thick,->](0,0)--(0,-1);
\end{tikzpicture}
&
\begin{tikzpicture}[scale=.2, baseline=(current bounding box.center)]
\foreach \x in {-1,0,1} \foreach \y in {-1,0,1} \fill(\x,\y) circle[radius=2pt];
\draw[thick,->](0,0)--(1,-1);
\draw[thick,->](0,0)--(0,1);
\draw[thick,->](0,0)--(1,1);
\draw[thick,->](0,0)--(-1,0);
\draw[thick,->](0,0)--(1,0);
\draw[thick,->](0,0)--(0,-1);
\end{tikzpicture}
&
\begin{tikzpicture}[scale=.2, baseline=(current bounding box.center)]
\foreach \x in {-1,0,1} \foreach \y in {-1,0,1} \fill(\x,\y) circle[radius=2pt];
\draw[thick,->](0,0)--(-1,1);
\draw[thick,->](0,0)--(0,1);
\draw[thick,->](0,0)--(1,1);
\draw[thick,->](0,0)--(-1,-1);
\draw[thick,->](0,0)--(1,-1);
\end{tikzpicture}
& 
\begin{tikzpicture}[scale=.2, baseline=(current bounding box.center)]
\foreach \x in {-1,0,1} \foreach \y in {-1,0,1} \fill(\x,\y) circle[radius=2pt];
\draw[thick,->](0,0)--(-1,1);
\draw[thick,->](0,0)--(1,0);
\draw[thick,->](0,0)--(1,1);
\draw[thick,->](0,0)--(-1,-1);
\draw[thick,->](0,0)--(1,-1);
\end{tikzpicture}
& 
\begin{tikzpicture}[scale=.2, baseline=(current bounding box.center)]
\foreach \x in {-1,0,1} \foreach \y in {-1,0,1} \fill(\x,\y) circle[radius=2pt];
\draw[thick,->](0,0)--(-1,1);
\draw[thick,->](0,0)--(0,1);
\draw[thick,->](0,0)--(1,1);
\draw[thick,->](0,0)--(-1,0);
\draw[thick,->](0,0)--(1,0);
\draw[thick,->](0,0)--(-1,-1);
\draw[thick,->](0,0)--(1,-1);
\end{tikzpicture}
&
\begin{tikzpicture}[scale=.2, baseline=(current bounding box.center)]
\foreach \x in {-1,0,1} \foreach \y in {-1,0,1} \fill(\x,\y) circle[radius=2pt];
\draw[thick,->](0,0)--(-1,1);
\draw[thick,->](0,0)--(0,1);
\draw[thick,->](0,0)--(1,1);
\draw[thick,->](0,0)--(0,-1);
\draw[thick,->](0,0)--(1,0);
\draw[thick,->](0,0)--(-1,-1);
\draw[thick,->](0,0)--(1,-1);
\end{tikzpicture}
&
\begin{tikzpicture}[scale=.2, baseline=(current bounding box.center)]
\foreach \x in {-1,0,1} \foreach \y in {-1,0,1} \fill(\x,\y) circle[radius=2pt];
\draw[thick,->](0,0)--(0,1);
\draw[thick,->](0,0)--(-1,-1);
\draw[thick,->](0,0)--(0,-1);
\draw[thick,->](0,0)--(1,-1);
\end{tikzpicture}
&
\begin{tikzpicture}[scale=.2, baseline=(current bounding box.center)]
\foreach \x in {-1,0,1} \foreach \y in {-1,0,1} \fill(\x,\y) circle[radius=2pt];
\draw[thick,->](0,0)--(1,0);
\draw[thick,->](0,0)--(-1,-1);
\draw[thick,->](0,0)--(-1,0);
\draw[thick,->](0,0)--(-1,1);
\end{tikzpicture}
&
\begin{tikzpicture}[scale=.2, baseline=(current bounding box.center)]
\foreach \x in {-1,0,1} \foreach \y in {-1,0,1} \fill(\x,\y) circle[radius=2pt];
\draw[thick,->](0,0)--(0,1);
\draw[thick,->](0,0)--(-1,0);
\draw[thick,->](0,0)--(1,0);
\draw[thick,->](0,0)--(-1,-1);red
\draw[thick,->](0,0)--(0,-1);
\draw[thick,->](0,0)--(1,-1);
\end{tikzpicture}
&
\begin{tikzpicture}[scale=.2, baseline=(current bounding box.center)]
\foreach \x in {-1,0,1} \foreach \y in {-1,0,1} \fill(\x,\y) circle[radius=2pt];
\draw[thick,->](0,0)--(0,1);
\draw[thick,->](0,0)--(-1,0);
\draw[thick,->](0,0)--(1,0);
\draw[thick,->](0,0)--(-1,-1);red
\draw[thick,->](0,0)--(0,-1);
\draw[thick,->](0,0)--(-1,1);
\end{tikzpicture}
&
\begin{tikzpicture}[scale=.2, baseline=(current bounding box.center)]
\foreach \x in {-1,0,1} \foreach \y in {-1,0,1} \fill(\x,\y) circle[radius=2pt];
\draw[thick,->](0,0)--(-1,1);
\draw[thick,->](0,0)--(1,1);
\draw[thick,->](0,0)--(-1,-1);
\draw[thick,->](0,0)--(0,-1);
\draw[thick,->](0,0)--(1,-1);
\end{tikzpicture}
&
\begin{tikzpicture}[scale=.2, baseline=(current bounding box.center)]
\foreach \x in {-1,0,1} \foreach \y in {-1,0,1} \fill(\x,\y) circle[radius=2pt];
\draw[thick,->](0,0)--(-1,1);
\draw[thick,->](0,0)--(1,1);
\draw[thick,->](0,0)--(-1,-1);
\draw[thick,->](0,0)--(-1,0);
\draw[thick,->](0,0)--(1,-1);
\end{tikzpicture}
&
\begin{tikzpicture}[scale=.2, baseline=(current bounding box.center)]
\foreach \x in {-1,0,1} \foreach \y in {-1,0,1} \fill(\x,\y) circle[radius=2pt];
\draw[thick,->](0,0)--(-1,1);
\draw[thick,->](0,0)--(1,1);
\draw[thick,->](0,0)--(-1,0);
\draw[thick,->](0,0)--(1,0);
\draw[thick,->](0,0)--(-1,-1);
\draw[thick,->](0,0)--(0,-1);
\draw[thick,->](0,0)--(1,-1);
\end{tikzpicture}
&
\begin{tikzpicture}[scale=.2, baseline=(current bounding box.center)]
\foreach \x in {-1,0,1} \foreach \y in {-1,0,1} \fill(\x,\y) circle[radius=2pt];
\draw[thick,->](0,0)--(-1,1);
\draw[thick,->](0,0)--(1,1);
\draw[thick,->](0,0)--(-1,0);
\draw[thick,->](0,0)--(0,1);
\draw[thick,->](0,0)--(-1,-1);
\draw[thick,->](0,0)--(0,-1);
\draw[thick,->](0,0)--(1,-1);
\end{tikzpicture}
&
\begin{tikzpicture}[scale=.2, baseline=(current bounding box.center)]
\foreach \x in {-1,0,1} \foreach \y in {-1,0,1} \fill(\x,\y) circle[radius=2pt];
\draw[thick,->](0,0)--(0,1);
\draw[thick,->](0,0)--(1,-1);
\draw[thick,->](0,0)--(-1,-1);
\end{tikzpicture} 
&
\begin{tikzpicture}[scale=.2, baseline=(current bounding box.center)]
\foreach \x in {-1,0,1} \foreach \y in {-1,0,1} \fill(\x,\y) circle[radius=2pt];
\draw[thick,->](0,0)--(1,0);
\draw[thick,->](0,0)--(-1,1);
\draw[thick,->](0,0)--(-1,-1);
\end{tikzpicture} 
\\
\begin{tikzpicture}[scale=.2, baseline=(current bounding box.center)]
\foreach \x in {-1,0,1} \foreach \y in {-1,0,1} \fill(\x,\y) circle[radius=2pt];
\draw[thick,->](0,0)--(0,1);
\draw[thick,->](0,0)--(1,-1);
\draw[thick,->](0,0)--(-1,-1);
\draw[thick,->](0,0)--(1,0);
\draw[thick,->](0,0)--(-1,0);
\end{tikzpicture}
&
\begin{tikzpicture}[scale=.2, baseline=(current bounding box.center)]
\foreach \x in {-1,0,1} \foreach \y in {-1,0,1} \fill(\x,\y) circle[radius=2pt];
\draw[thick,->](0,0)--(0,1);
\draw[thick,->](0,0)--(-1,1);
\draw[thick,->](0,0)--(-1,-1);
\draw[thick,->](0,0)--(1,0);
\draw[thick,->](0,0)--(0,-1);
\end{tikzpicture}
&
\begin{tikzpicture}[scale=.2, baseline=(current bounding box.center)]
\foreach \x in {-1,0,1} \foreach \y in {-1,0,1} \fill(\x,\y) circle[radius=2pt];
\draw[thick,->](0,0)--(0,1);
\draw[thick,->](0,0)--(-1,0);
\draw[thick,->](0,0)--(1,-1);
\end{tikzpicture}
&
\begin{tikzpicture}[scale=.2, baseline=(current bounding box.center)]
\foreach \x in {-1,0,1} \foreach \y in {-1,0,1} \fill(\x,\y) circle[radius=2pt];
\draw[thick,->](0,0)--(1,0);
\draw[thick,->](0,0)--(0,-1);
\draw[thick,->](0,0)--(-1,1);
\end{tikzpicture}
&
\begin{tikzpicture}[scale=.2, baseline=(current bounding box.center)]
\foreach \x in {-1,0,1} \foreach \y in {-1,0,1} \fill(\x,\y) circle[radius=2pt];
\draw[thick,->](0,0)--(-1,1);
\draw[thick,->](0,0)--(0,1);
\draw[thick,->](0,0)--(-1,0);
\draw[thick,->](0,0)--(1,0);
\draw[thick,->](0,0)--(0,-1);
\draw[thick,->](0,0)--(1,-1);
\end{tikzpicture}
&
\begin{tikzpicture}[scale=.2, baseline=(current bounding box.center)]
\foreach \x in {-1,0,1} \foreach \y in {-1,0,1} \fill(\x,\y) circle[radius=2pt];
\draw[thick,->](0,0)--(-1,1);
\draw[thick,->](0,0)--(1,-1);
\draw[thick,->](0,0)--(1,0);
\draw[thick,->](0,0)--(-1,0);
\end{tikzpicture}
&
\begin{tikzpicture}[scale=.2, baseline=(current bounding box.center)]
\foreach \x in {-1,0,1} \foreach \y in {-1,0,1} \fill(\x,\y) circle[radius=2pt];
\draw[thick,->](0,0)--(-1,1);
\draw[thick,->](0,0)--(1,-1);
\draw[thick,->](0,0)--(0,1);
\draw[thick,->](0,0)--(0,-1);
\end{tikzpicture}
&
\begin{tikzpicture}[scale=.2, baseline=(current bounding box.center)]
\foreach \x in {-1,0,1} \foreach \y in {-1,0,1} \fill(\x,\y) circle[radius=2pt];
\draw[thick,->](0,0)--(0,1);
\draw[thick,->](0,0)--(1,0);
\draw[thick,->](0,0)--(-1,-1);
\draw[thick,->](0,0)--(0,-1);
\end{tikzpicture}
&
\begin{tikzpicture}[scale=.2, baseline=(current bounding box.center)]
\foreach \x in {-1,0,1} \foreach \y in {-1,0,1} \fill(\x,\y) circle[radius=2pt];
\draw[thick,->](0,0)--(0,1);
\draw[thick,->](0,0)--(1,0);
\draw[thick,->](0,0)--(-1,-1);
\draw[thick,->](0,0)--(-1,0);
\end{tikzpicture}
&
\begin{tikzpicture}[scale=.2, baseline=(current bounding box.center)]
\foreach \x in {-1,0,1} \foreach \y in {-1,0,1} \fill(\x,\y) circle[radius=2pt];
\draw[thick,->](0,0)--(0,1);
\draw[thick,->](0,0)--(1,0);
\draw[thick,->](0,0)--(-1,-1);
\draw[thick,->](0,0)--(1,-1);
\end{tikzpicture}
&
\begin{tikzpicture}[scale=.2, baseline=(current bounding box.center)]
\foreach \x in {-1,0,1} \foreach \y in {-1,0,1} \fill(\x,\y) circle[radius=2pt];
\draw[thick,->](0,0)--(0,1);
\draw[thick,->](0,0)--(1,0);
\draw[thick,->](0,0)--(-1,-1);
\draw[thick,->](0,0)--(-1,1);
\end{tikzpicture}
&
\begin{tikzpicture}[scale=.2, baseline=(current bounding box.center)]
\foreach \x in {-1,0,1} \foreach \y in {-1,0,1} \fill(\x,\y) circle[radius=2pt];
\draw[thick,->](0,0)--(0,1);
\draw[thick,->](0,0)--(-1,0);
\draw[thick,->](0,0)--(1,0);
\draw[thick,->](0,0)--(1,-1);
\end{tikzpicture}
&
\begin{tikzpicture}[scale=.2, baseline=(current bounding box.center)]
\foreach \x in {-1,0,1} \foreach \y in {-1,0,1} \fill(\x,\y) circle[radius=2pt];
\draw[thick,->](0,0)--(0,1);
\draw[thick,->](0,0)--(0,-1);
\draw[thick,->](0,0)--(1,0);
\draw[thick,->](0,0)--(-1,1);
\end{tikzpicture}
&
\begin{tikzpicture}[scale=.2, baseline=(current bounding box.center)]
\foreach \x in {-1,0,1} \foreach \y in {-1,0,1} \fill(\x,\y) circle[radius=2pt];
\draw[thick,->](0,0)--(0,1);
\draw[thick,->](0,0)--(1,0);
\draw[thick,->](0,0)--(-1,-1);
\draw[thick,->](0,0)--(0,-1);
\draw[thick,->](0,0)--(1,-1);
\end{tikzpicture}
&
\begin{tikzpicture}[scale=.2, baseline=(current bounding box.center)]
\foreach \x in {-1,0,1} \foreach \y in {-1,0,1} \fill(\x,\y) circle[radius=2pt];
\draw[thick,->](0,0)--(1,0);
\draw[thick,->](0,0)--(0,1);
\draw[thick,->](0,0)--(-1,-1);
\draw[thick,->](0,0)--(-1,0);
\draw[thick,->](0,0)--(-1,1);
\end{tikzpicture}
&
\begin{tikzpicture}[scale=.2, baseline=(current bounding box.center)]
\foreach \x in {-1,0,1} \foreach \y in {-1,0,1} \fill(\x,\y) circle[radius=2pt];
\draw[thick,->](0,0)--(-1,1);
\draw[thick,->](0,0)--(0,1);
\draw[thick,->](0,0)--(1,0);
\draw[thick,->](0,0)--(0,-1);
\draw[thick,->](0,0)--(1,-1);
\end{tikzpicture}
\\
\begin{tikzpicture}[scale=.2, baseline=(current bounding box.center)]
\foreach \x in {-1,0,1} \foreach \y in {-1,0,1} \fill(\x,\y) circle[radius=2pt];
\draw[thick,->](0,0)--(-1,1);
\draw[thick,->](0,0)--(0,1);
\draw[thick,->](0,0)--(1,0);
\draw[thick,->](0,0)--(-1,0);
\draw[thick,->](0,0)--(1,-1);
\end{tikzpicture}
&
\begin{tikzpicture}[scale=.2, baseline=(current bounding box.center)]
\foreach \x in {-1,0,1} \foreach \y in {-1,0,1} \fill(\x,\y) circle[radius=2pt];
\draw[thick,->](0,0)--(0,1);
\draw[thick,->](0,0)--(1,1);
\draw[thick,->](0,0)--(-1,0);
\draw[thick,->](0,0)--(0,-1);
\end{tikzpicture}
&
\begin{tikzpicture}[scale=.2, baseline=(current bounding box.center)]
\foreach \x in {-1,0,1} \foreach \y in {-1,0,1} \fill(\x,\y) circle[radius=2pt];
\draw[thick,->](0,0)--(1,0);
\draw[thick,->](0,0)--(1,1);
\draw[thick,->](0,0)--(-1,0);
\draw[thick,->](0,0)--(0,-1);
\end{tikzpicture}
&
\begin{tikzpicture}[scale=.2, baseline=(current bounding box.center)]
\foreach \x in {-1,0,1} \foreach \y in {-1,0,1} \fill(\x,\y) circle[radius=2pt];
\draw[thick,->](0,0)--(0,1);
\draw[thick,->](0,0)--(1,1);
\draw[thick,->](0,0)--(-1,0);
\draw[thick,->](0,0)--(-1,-1);
\draw[thick,->](0,0)--(0,-1);
\end{tikzpicture}
&
\begin{tikzpicture}[scale=.2, baseline=(current bounding box.center)]
\foreach \x in {-1,0,1} \foreach \y in {-1,0,1} \fill(\x,\y) circle[radius=2pt];
\draw[thick,->](0,0)--(1,0);
\draw[thick,->](0,0)--(1,1);
\draw[thick,->](0,0)--(-1,0);
\draw[thick,->](0,0)--(-1,-1);
\draw[thick,->](0,0)--(0,-1);
\end{tikzpicture}
&
\begin{tikzpicture}[scale=.2, baseline=(current bounding box.center)]
\foreach \x in {-1,0,1} \foreach \y in {-1,0,1} \fill(\x,\y) circle[radius=2pt];
\draw[thick,->](0,0)--(0,1);
\draw[thick,->](0,0)--(1,1);
\draw[thick,->](0,0)--(-1,0);
\draw[thick,->](0,0)--(1,0);
\draw[thick,->](0,0)--(-1,-1);
\end{tikzpicture}
& 
\begin{tikzpicture}[scale=.2, baseline=(current bounding box.center)]
\foreach \x in {-1,0,1} \foreach \y in {-1,0,1} \fill(\x,\y) circle[radius=2pt];
\draw[thick,->](0,0)--(0,1);
\draw[thick,->](0,0)--(1,1);
\draw[thick,->](0,0)--(0,-1);
\draw[thick,->](0,0)--(1,0);
\draw[thick,->](0,0)--(-1,-1);
\end{tikzpicture}
& 
\begin{tikzpicture}[scale=.2, baseline=(current bounding box.center)]
\foreach \x in {-1,0,1} \foreach \y in {-1,0,1} \fill(\x,\y) circle[radius=2pt];
\draw[thick,->](0,0)--(0,1);
\draw[thick,->](0,0)--(1,1);
\draw[thick,->](0,0)--(-1,0);
\draw[thick,->](0,0)--(1,0);
\draw[thick,->](0,0)--(0,-1);
\end{tikzpicture}
&
\begin{tikzpicture}[scale=.2, baseline=(current bounding box.center)]
\foreach \x in {-1,0,1} \foreach \y in {-1,0,1} \fill(\x,\y) circle[radius=2pt];
\draw[thick,->](0,0)--(-1,1);
\draw[thick,->](0,0)--(1,1);
\draw[thick,->](0,0)--(0,-1);
\draw[thick,->](0,0)--(1,-1);
\end{tikzpicture}
& 
\begin{tikzpicture}[scale=.2, baseline=(current bounding box.center)]
\foreach \x in {-1,0,1} \foreach \y in {-1,0,1} \fill(\x,\y) circle[radius=2pt];
\draw[thick,->](0,0)--(-1,1);
\draw[thick,->](0,0)--(1,1);
\draw[thick,->](0,0)--(-1,0);
\draw[thick,->](0,0)--(1,-1);
\end{tikzpicture}
& 
\begin{tikzpicture}[scale=.2, baseline=(current bounding box.center)]
\foreach \x in {-1,0,1} \foreach \y in {-1,0,1} \fill(\x,\y) circle[radius=2pt];
\draw[thick,->](0,0)--(-1,1);
\draw[thick,->](0,0)--(0,1);
\draw[thick,->](0,0)--(1,1);
\draw[thick,->](0,0)--(0,-1);
\draw[thick,->](0,0)--(1,-1);
\end{tikzpicture}
&
\begin{tikzpicture}[scale=.2, baseline=(current bounding box.center)]
\foreach \x in {-1,0,1} \foreach \y in {-1,0,1} \fill(\x,\y) circle[radius=2pt];
\draw[thick,->](0,0)--(-1,1);
\draw[thick,->](0,0)--(1,0);
\draw[thick,->](0,0)--(1,1);
\draw[thick,->](0,0)--(-1,0);
\draw[thick,->](0,0)--(1,-1);
\end{tikzpicture}
&
\begin{tikzpicture}[scale=.2, baseline=(current bounding box.center)]
\foreach \x in {-1,0,1} \foreach \y in {-1,0,1} \fill(\x,\y) circle[radius=2pt];
\draw[thick,->](0,0)--(-1,1);
\draw[thick,->](0,0)--(0,1);
\draw[thick,->](0,0)--(1,1);
\draw[thick,->](0,0)--(-1,0);
\draw[thick,->](0,0)--(1,-1);
\end{tikzpicture}
&
\begin{tikzpicture}[scale=.2, baseline=(current bounding box.center)]
\foreach \x in {-1,0,1} \foreach \y in {-1,0,1} \fill(\x,\y) circle[radius=2pt];
\draw[thick,->](0,0)--(1,-1);
\draw[thick,->](0,0)--(1,0);
\draw[thick,->](0,0)--(1,1);
\draw[thick,->](0,0)--(0,-1);
\draw[thick,->](0,0)--(-1,1);
\end{tikzpicture}
&
\begin{tikzpicture}[scale=.2, baseline=(current bounding box.center)]
\foreach \x in {-1,0,1} \foreach \y in {-1,0,1} \fill(\x,\y) circle[radius=2pt];
\draw[thick,->](0,0)--(-1,1);
\draw[thick,->](0,0)--(1,1);
\draw[thick,->](0,0)--(-1,0);
\draw[thick,->](0,0)--(0,-1);
\draw[thick,->](0,0)--(1,-1);
\end{tikzpicture}
&
\begin{tikzpicture}[scale=.2, baseline=(current bounding box.center)]
\foreach \x in {-1,0,1} \foreach \y in {-1,0,1} \fill(\x,\y) circle[radius=2pt];
\draw[thick,->](0,0)--(-1,1);
\draw[thick,->](0,0)--(0,1);
\draw[thick,->](0,0)--(1,1);
\draw[thick,->](0,0)--(-1,0);
\draw[thick,->](0,0)--(1,0);
\draw[thick,->](0,0)--(1,-1);
\end{tikzpicture}
\\
\begin{tikzpicture}[scale=.2, baseline=(current bounding box.center)]
\foreach \x in {-1,0,1} \foreach \y in {-1,0,1} \fill(\x,\y) circle[radius=2pt];
\draw[thick,->](0,0)--(-1,1);
\draw[thick,->](0,0)--(0,1);
\draw[thick,->](0,0)--(1,1);
\draw[thick,->](0,0)--(0,-1);
\draw[thick,->](0,0)--(1,0);
\draw[thick,->](0,0)--(1,-1);
\end{tikzpicture}
&
\begin{tikzpicture}[scale=.2, baseline=(current bounding box.center)]
\foreach \x in {-1,0,1} \foreach \y in {-1,0,1} \fill(\x,\y) circle[radius=2pt];
\draw[thick,->](0,0)--(-1,1);
\draw[thick,->](0,0)--(0,1);
\draw[thick,->](0,0)--(1,1);
\draw[thick,->](0,0)--(1,0);
\draw[thick,->](0,0)--(-1,-1);
\draw[thick,->](0,0)--(1,-1);
\end{tikzpicture}
&
\begin{tikzpicture}[scale=.2, baseline=(current bounding box.center)]
\foreach \x in {-1,0,1} \foreach \y in {-1,0,1} \fill(\x,\y) circle[radius=2pt];
\draw[thick,->](0,0)--(-1,1);
\draw[thick,->](0,0)--(0,1);
\draw[thick,->](0,0)--(1,1);
\draw[thick,->](0,0)--(-1,0);
\draw[thick,->](0,0)--(0,-1);
\draw[thick,->](0,0)--(1,-1);
\end{tikzpicture}
&
\begin{tikzpicture}[scale=.2, baseline=(current bounding box.center)]
\foreach \x in {-1,0,1} \foreach \y in {-1,0,1} \fill(\x,\y) circle[radius=2pt];
\draw[thick,->](0,0)--(-1,1);
\draw[thick,->](0,0)--(1,0);
\draw[thick,->](0,0)--(1,1);
\draw[thick,->](0,0)--(-1,0);
\draw[thick,->](0,0)--(0,-1);
\draw[thick,->](0,0)--(1,-1);
\end{tikzpicture}
&
\begin{tikzpicture}[scale=.2, baseline=(current bounding box.center)]
\foreach \x in {-1,0,1} \foreach \y in {-1,0,1} \fill(\x,\y) circle[radius=2pt];
\draw[thick,->](0,0)--(-1,1);
\draw[thick,->](0,0)--(0,1);
\draw[thick,->](0,0)--(1,1);
\draw[thick,->](0,0)--(-1,0);
\draw[thick,->](0,0)--(-1,-1);
\draw[thick,->](0,0)--(1,-1);
\end{tikzpicture}
&
\begin{tikzpicture}[scale=.2, baseline=(current bounding box.center)]
\foreach \x in {-1,0,1} \foreach \y in {-1,0,1} \fill(\x,\y) circle[radius=2pt];
\draw[thick,->](0,0)--(-1,1);
\draw[thick,->](0,0)--(1,0);
\draw[thick,->](0,0)--(1,1);
\draw[thick,->](0,0)--(0,-1);
\draw[thick,->](0,0)--(-1,-1);
\draw[thick,->](0,0)--(1,-1);
\end{tikzpicture}
&
\begin{tikzpicture}[scale=.2, baseline=(current bounding box.center)]
\foreach \x in {-1,0,1} \foreach \y in {-1,0,1} \fill(\x,\y) circle[radius=2pt];
\draw[thick,->](0,0)--(-1,1);
\draw[thick,->](0,0)--(1,1);
\draw[thick,->](0,0)--(-1,0);
\draw[thick,->](0,0)--(-1,-1);
\draw[thick,->](0,0)--(0,-1);
\draw[thick,->](0,0)--(1,-1);
\end{tikzpicture}
&
\begin{tikzpicture}[scale=.2, baseline=(current bounding box.center)]
\foreach \x in {-1,0,1} \foreach \y in {-1,0,1} \fill(\x,\y) circle[radius=2pt];
\draw[thick,->](0,0)--(-1,1);
\draw[thick,->](0,0)--(0,1);
\draw[thick,->](0,0)--(1,1);
\draw[thick,->](0,0)--(-1,0);
\draw[thick,->](0,0)--(1,0);
\draw[thick,->](0,0)--(0,-1);
\draw[thick,->](0,0)--(1,-1);
\end{tikzpicture}
&
\begin{tikzpicture}[scale=.2, baseline=(current bounding box.center)]
\foreach \x in {-1,0,1} \foreach \y in {-1,0,1} \fill(\x,\y) circle[radius=2pt];
\draw[thick,->](0,0)--(-1,1);
\draw[thick,->](0,0)--(1,1);
\draw[thick,->](0,0)--(-1,-1);
\draw[thick,->](0,0)--(0,-1);
\end{tikzpicture}
&
\begin{tikzpicture}[scale=.2, baseline=(current bounding box.center)]
\foreach \x in {-1,0,1} \foreach \y in {-1,0,1} \fill(\x,\y) circle[radius=2pt];
\draw[thick,->](0,0)--(1,-1);
\draw[thick,->](0,0)--(1,1);
\draw[thick,->](0,0)--(-1,-1);
\draw[thick,->](0,0)--(-1,0);
\end{tikzpicture}
&
\begin{tikzpicture}[scale=.2, baseline=(current bounding box.center)]
\foreach \x in {-1,0,1} \foreach \y in {-1,0,1} \fill(\x,\y) circle[radius=2pt];
\draw[thick,->](0,0)--(-1,1);
\draw[thick,->](0,0)--(1,1);
\draw[thick,->](0,0)--(-1,0);
\draw[thick,->](0,0)--(0,-1);
\end{tikzpicture}
&
\begin{tikzpicture}[scale=.2, baseline=(current bounding box.center)]
\foreach \x in {-1,0,1} \foreach \y in {-1,0,1} \fill(\x,\y) circle[radius=2pt];
\draw[thick,->](0,0)--(1,-1);
\draw[thick,->](0,0)--(1,1);
\draw[thick,->](0,0)--(-1,0);
\draw[thick,->](0,0)--(0,-1);
\end{tikzpicture}
&
\begin{tikzpicture}[scale=.2, baseline=(current bounding box.center)]
\foreach \x in {-1,0,1} \foreach \y in {-1,0,1} \fill(\x,\y) circle[radius=2pt];
\draw[thick,->](0,0)--(-1,1);
\draw[thick,->](0,0)--(0,1);
\draw[thick,->](0,0)--(1,1);
\draw[thick,->](0,0)--(-1,-1);
\draw[thick,->](0,0)--(0,-1);
\end{tikzpicture}
&
\begin{tikzpicture}[scale=.2, baseline=(current bounding box.center)]
\foreach \x in {-1,0,1} \foreach \y in {-1,0,1} \fill(\x,\y) circle[radius=2pt];
\draw[thick,->](0,0)--(1,-1);
\draw[thick,->](0,0)--(1,0);
\draw[thick,->](0,0)--(1,1);
\draw[thick,->](0,0)--(-1,-1);
\draw[thick,->](0,0)--(-1,0);
\end{tikzpicture}
&
\begin{tikzpicture}[scale=.2, baseline=(current bounding box.center)]
\foreach \x in {-1,0,1} \foreach \y in {-1,0,1} \fill(\x,\y) circle[radius=2pt];
\draw[thick,->](0,0)--(-1,1);
\draw[thick,->](0,0)--(0,1);
\draw[thick,->](0,0)--(1,1);
\draw[thick,->](0,0)--(-1,0);
\draw[thick,->](0,0)--(0,-1);
\end{tikzpicture}
&
\begin{tikzpicture}[scale=.2, baseline=(current bounding box.center)]
\foreach \x in {-1,0,1} \foreach \y in {-1,0,1} \fill(\x,\y) circle[radius=2pt];
\draw[thick,->](0,0)--(1,-1);
\draw[thick,->](0,0)--(1,0);
\draw[thick,->](0,0)--(1,1);
\draw[thick,->](0,0)--(-1,0);
\draw[thick,->](0,0)--(0,-1);
\end{tikzpicture}
\\
\begin{tikzpicture}[scale=.2, baseline=(current bounding box.center)]
\foreach \x in {-1,0,1} \foreach \y in {-1,0,1} \fill(\x,\y) circle[radius=2pt];
\draw[thick,->](0,0)--(-1,1);
\draw[thick,->](0,0)--(1,1);
\draw[thick,->](0,0)--(-1,0);
\draw[thick,->](0,0)--(-1,-1);
\draw[thick,->](0,0)--(0,-1);
\end{tikzpicture}
&
\begin{tikzpicture}[scale=.2, baseline=(current bounding box.center)]
\foreach \x in {-1,0,1} \foreach \y in {-1,0,1} \fill(\x,\y) circle[radius=2pt];
\draw[thick,->](0,0)--(1,-1);
\draw[thick,->](0,0)--(1,1);
\draw[thick,->](0,0)--(-1,0);
\draw[thick,->](0,0)--(-1,-1);
\draw[thick,->](0,0)--(0,-1);
\end{tikzpicture}
&
\begin{tikzpicture}[scale=.2, baseline=(current bounding box.center)]
\foreach \x in {-1,0,1} \foreach \y in {-1,0,1} \fill(\x,\y) circle[radius=2pt];
\draw[thick,->](0,0)--(-1,1);
\draw[thick,->](0,0)--(0,1);
\draw[thick,->](0,0)--(1,1);
\draw[thick,->](0,0)--(-1,0);
\draw[thick,->](0,0)--(-1,-1);
\draw[thick,->](0,0)--(0,-1);
\end{tikzpicture}
&
\begin{tikzpicture}[scale=.2, baseline=(current bounding box.center)]
\foreach \x in {-1,0,1} \foreach \y in {-1,0,1} \fill(\x,\y) circle[radius=2pt];
\draw[thick,->](0,0)--(1,-1);
\draw[thick,->](0,0)--(1,0);
\draw[thick,->](0,0)--(1,1);
\draw[thick,->](0,0)--(-1,0);
\draw[thick,->](0,0)--(-1,-1);
\draw[thick,->](0,0)--(0,-1);
\end{tikzpicture}
&
\begin{tikzpicture}[scale=.2, baseline=(current bounding box.center)]
\foreach \x in {-1,0,1} \foreach \y in {-1,0,1} \fill(\x,\y) circle[radius=2pt];
\draw[thick,->](0,0)--(-1,1);
\draw[thick,->](0,0)--(0,1);
\draw[thick,->](0,0)--(1,1);
\draw[thick,->](0,0)--(1,0);
\draw[thick,->](0,0)--(-1,-1);
\end{tikzpicture}
&
\begin{tikzpicture}[scale=.2, baseline=(current bounding box.center)]
\foreach \x in {-1,0,1} \foreach \y in {-1,0,1} \fill(\x,\y) circle[radius=2pt];
\draw[thick,->](0,0)--(1,-1);
\draw[thick,->](0,0)--(0,1);
\draw[thick,->](0,0)--(1,1);
\draw[thick,->](0,0)--(1,0);
\draw[thick,->](0,0)--(-1,-1);
\end{tikzpicture}
&
\begin{tikzpicture}[scale=.2, baseline=(current bounding box.center)]
\foreach \x in {-1,0,1} \foreach \y in {-1,0,1} \fill(\x,\y) circle[radius=2pt];
\draw[thick,->](0,0)--(-1,1);
\draw[thick,->](0,0)--(0,1);
\draw[thick,->](0,0)--(1,1);
\draw[thick,->](0,0)--(1,0);
\draw[thick,->](0,0)--(0,-1);
\end{tikzpicture}
&
\begin{tikzpicture}[scale=.2, baseline=(current bounding box.center)]
\foreach \x in {-1,0,1} \foreach \y in {-1,0,1} \fill(\x,\y) circle[radius=2pt];
\draw[thick,->](0,0)--(1,-1);
\draw[thick,->](0,0)--(0,1);
\draw[thick,->](0,0)--(1,1);
\draw[thick,->](0,0)--(1,0);
\draw[thick,->](0,0)--(-1,0);
\end{tikzpicture}
&
\begin{tikzpicture}[scale=.2, baseline=(current bounding box.center)]
\foreach \x in {-1,0,1} \foreach \y in {-1,0,1} \fill(\x,\y) circle[radius=2pt];
\draw[thick,->](0,0)--(-1,1);
\draw[thick,->](0,0)--(0,1);
\draw[thick,->](0,0)--(1,1);
\draw[thick,->](0,0)--(-1,0);
\draw[thick,->](0,0)--(1,0);
\draw[thick,->](0,0)--(-1,-1);
\draw[thick,->](0,0)--(0,-1);
\end{tikzpicture}
&
\begin{tikzpicture}[scale=.2, baseline=(current bounding box.center)]
\foreach \x in {-1,0,1} \foreach \y in {-1,0,1} \fill(\x,\y) circle[radius=2pt];
\draw[thick,->](0,0)--(1,-1);
\draw[thick,->](0,0)--(0,1);
\draw[thick,->](0,0)--(1,1);
\draw[thick,->](0,0)--(-1,0);
\draw[thick,->](0,0)--(1,0);
\draw[thick,->](0,0)--(-1,-1);
\draw[thick,->](0,0)--(0,-1);
\end{tikzpicture}
&
\begin{tikzpicture}[scale=.2, baseline=(current bounding box.center)]
\foreach \x in {-1,0,1} \foreach \y in {-1,0,1} \fill(\x,\y) circle[radius=2pt];
\draw[thick,->](0,0)--(0,1);
\draw[thick,->](0,0)--(1,1);
\draw[thick,->](0,0)--(-1,-1);
\draw[thick,->](0,0)--(1,-1);
\end{tikzpicture}
&
\begin{tikzpicture}[scale=.2, baseline=(current bounding box.center)]
\foreach \x in {-1,0,1} \foreach \y in {-1,0,1} \fill(\x,\y) circle[radius=2pt];
\draw[thick,->](0,0)--(1,0);
\draw[thick,->](0,0)--(1,1);
\draw[thick,->](0,0)--(-1,-1);
\draw[thick,->](0,0)--(-1,1);
\end{tikzpicture}
&
\begin{tikzpicture}[scale=.2, baseline=(current bounding box.center)]
\foreach \x in {-1,0,1} \foreach \y in {-1,0,1} \fill(\x,\y) circle[radius=2pt];
\draw[thick,->](0,0)--(0,1);
\draw[thick,->](0,0)--(1,1);
\draw[thick,->](0,0)--(-1,0);
\draw[thick,->](0,0)--(1,-1);
\end{tikzpicture}
&
\begin{tikzpicture}[scale=.2, baseline=(current bounding box.center)]
\foreach \x in {-1,0,1} \foreach \y in {-1,0,1} \fill(\x,\y) circle[radius=2pt];
\draw[thick,->](0,0)--(1,0);
\draw[thick,->](0,0)--(1,1);
\draw[thick,->](0,0)--(0,-1);
\draw[thick,->](0,0)--(-1,1);
\end{tikzpicture}
&
\begin{tikzpicture}[scale=.2, baseline=(current bounding box.center)]
\foreach \x in {-1,0,1} \foreach \y in {-1,0,1} \fill(\x,\y) circle[radius=2pt];
\draw[thick,->](0,0)--(0,1);
\draw[thick,->](0,0)--(1,1);
\draw[thick,->](0,0)--(-1,0);
\draw[thick,->](0,0)--(-1,-1);
\draw[thick,->](0,0)--(1,-1);
\end{tikzpicture}
&
\begin{tikzpicture}[scale=.2, baseline=(current bounding box.center)]
\foreach \x in {-1,0,1} \foreach \y in {-1,0,1} \fill(\x,\y) circle[radius=2pt];
\draw[thick,->](0,0)--(1,0);
\draw[thick,->](0,0)--(1,1);
\draw[thick,->](0,0)--(0,-1);
\draw[thick,->](0,0)--(-1,-1);
\draw[thick,->](0,0)--(-1,1);
\end{tikzpicture}
\\
\begin{tikzpicture}[scale=.2, baseline=(current bounding box.center)]
\foreach \x in {-1,0,1} \foreach \y in {-1,0,1} \fill(\x,\y) circle[radius=2pt];
\draw[thick,->](0,0)--(0,1);
\draw[thick,->](0,0)--(1,1);
\draw[thick,->](0,0)--(-1,-1);
\draw[thick,->](0,0)--(0,-1);
\draw[thick,->](0,0)--(1,-1);
\end{tikzpicture}
&
\begin{tikzpicture}[scale=.2, baseline=(current bounding box.center)]
\foreach \x in {-1,0,1} \foreach \y in {-1,0,1} \fill(\x,\y) circle[radius=2pt];
\draw[thick,->](0,0)--(1,0);
\draw[thick,->](0,0)--(1,1);
\draw[thick,->](0,0)--(-1,-1);
\draw[thick,->](0,0)--(-1,0);
\draw[thick,->](0,0)--(-1,1);
\end{tikzpicture}
&
\begin{tikzpicture}[scale=.2, baseline=(current bounding box.center)]
\foreach \x in {-1,0,1} \foreach \y in {-1,0,1} \fill(\x,\y) circle[radius=2pt];
\draw[thick,->](0,0)--(0,1);
\draw[thick,->](0,0)--(1,1);
\draw[thick,->](0,0)--(1,0);
\draw[thick,->](0,0)--(-1,-1);
\draw[thick,->](0,0)--(0,-1);
\draw[thick,->](0,0)--(1,-1);
\end{tikzpicture}
&
\begin{tikzpicture}[scale=.2, baseline=(current bounding box.center)]
\foreach \x in {-1,0,1} \foreach \y in {-1,0,1} \fill(\x,\y) circle[radius=2pt];
\draw[thick,->](0,0)--(0,1);
\draw[thick,->](0,0)--(1,1);
\draw[thick,->](0,0)--(1,0);
\draw[thick,->](0,0)--(-1,-1);
\draw[thick,->](0,0)--(-1,0);
\draw[thick,->](0,0)--(-1,1);
\end{tikzpicture}
&
\begin{tikzpicture}[scale=.2, baseline=(current bounding box.center)]
\foreach \x in {-1,0,1} \foreach \y in {-1,0,1} \fill(\x,\y) circle[radius=2pt];
\draw[thick,->](0,0)--(0,1);
\draw[thick,->](0,0)--(1,1);
\draw[thick,->](0,0)--(-1,0);
\draw[thick,->](0,0)--(1,0);
\draw[thick,->](0,0)--(-1,-1);
\draw[thick,->](0,0)--(1,-1);
\end{tikzpicture}
&
\begin{tikzpicture}[scale=.2, baseline=(current bounding box.center)]
\foreach \x in {-1,0,1} \foreach \y in {-1,0,1} \fill(\x,\y) circle[radius=2pt];
\draw[thick,->](0,0)--(0,1);
\draw[thick,->](0,0)--(1,1);
\draw[thick,->](0,0)--(0,-1);
\draw[thick,->](0,0)--(1,0);
\draw[thick,->](0,0)--(-1,-1);
\draw[thick,->](0,0)--(-1,1);
\end{tikzpicture}
&
 \begin{tikzpicture}[scale=.2, baseline=(current bounding box.center)]
\foreach \x in {-1,0,1} \foreach \y in {-1,0,1} \fill(\x,\y) circle[radius=2pt];
\draw[thick,->](0,0)--(0,1);
\draw[thick,->](0,0)--(1,1);
\draw[thick,->](0,0)--(-1,0);
\draw[thick,->](0,0)--(-1,-1);
\draw[thick,->](0,0)--(0,-1);
\draw[thick,->](0,0)--(1,-1);
\end{tikzpicture}
&
 \begin{tikzpicture}[scale=.2, baseline=(current bounding box.center)]
\foreach \x in {-1,0,1} \foreach \y in {-1,0,1} \fill(\x,\y) circle[radius=2pt];
\draw[thick,->](0,0)--(1,0);
\draw[thick,->](0,0)--(1,1);
\draw[thick,->](0,0)--(-1,0);
\draw[thick,->](0,0)--(-1,-1);
\draw[thick,->](0,0)--(0,-1);
\draw[thick,->](0,0)--(-1,1);
\end{tikzpicture}
&
\begin{tikzpicture}[scale=.2, baseline=(current bounding box.center)]
\foreach \x in {-1,0,1} \foreach \y in {-1,0,1} \fill(\x,\y) circle[radius=2pt];
\draw[thick,->](0,0)--(0,1);
\draw[thick,->](0,0)--(-1,0);
\draw[thick,->](0,0)--(1,0);
\draw[thick,->](0,0)--(0,-1);
\draw[thick,->](0,0)--(1,-1);
\end{tikzpicture}
&
\begin{tikzpicture}[scale=.2, baseline=(current bounding box.center)]
\foreach \x in {-1,0,1} \foreach \y in {-1,0,1} \fill(\x,\y) circle[radius=2pt];
\draw[thick,->](0,0)--(0,1);
\draw[thick,->](0,0)--(-1,0);
\draw[thick,->](0,0)--(1,0);
\draw[thick,->](0,0)--(0,-1);
\draw[thick,->](0,0)--(-1,1);
\end{tikzpicture}
&
\begin{tikzpicture}[scale=.2, baseline=(current bounding box.center)]
\foreach \x in {-1,0,1} \foreach \y in {-1,0,1} \fill(\x,\y) circle[radius=2pt];
\draw[thick,->](0,0)--(0,1);
\draw[thick,->](0,0)--(-1,0);
\draw[thick,->](0,0)--(1,0);
\draw[thick,->](0,0)--(-1,-1);
\draw[thick,->](0,0)--(0,-1);
\end{tikzpicture}
&
\begin{tikzpicture}[scale=.2, baseline=(current bounding box.center)]
\foreach \x in {-1,0,1} \foreach \y in {-1,0,1} \fill(\x,\y) circle[radius=2pt];
\draw[thick,->](0,0)--(-1,1);
\draw[thick,->](0,0)--(0,1);
\draw[thick,->](0,0)--(1,0);
\draw[thick,->](0,0)--(-1,-1);
\draw[thick,->](0,0)--(1,-1);
\end{tikzpicture}
&
\begin{tikzpicture}[scale=.2, baseline=(current bounding box.center)]
\foreach \x in {-1,0,1} \foreach \y in {-1,0,1} \fill(\x,\y) circle[radius=2pt];
\draw[thick,->](0,0)--(-1,1);
\draw[thick,->](0,0)--(0,1);
\draw[thick,->](0,0)--(-1,0);
\draw[thick,->](0,0)--(1,0);
\draw[thick,->](0,0)--(-1,-1);
\draw[thick,->](0,0)--(1,-1);
\end{tikzpicture}
&
\begin{tikzpicture}[scale=.2, baseline=(current bounding box.center)]
\foreach \x in {-1,0,1} \foreach \y in {-1,0,1} \fill(\x,\y) circle[radius=2pt];
\draw[thick,->](0,0)--(-1,1);
\draw[thick,->](0,0)--(0,1);
\draw[thick,->](0,0)--(0,-1);
\draw[thick,->](0,0)--(1,0);
\draw[thick,->](0,0)--(-1,-1);
\draw[thick,->](0,0)--(1,-1);
\end{tikzpicture}
&
\begin{tikzpicture}[scale=.2, baseline=(current bounding box.center)]
\foreach \x in {-1,0,1} \foreach \y in {-1,0,1} \fill(\x,\y) circle[radius=2pt];
\draw[thick,->](0,0)--(-1,1);
\draw[thick,->](0,0)--(0,1);
\draw[thick,->](0,0)--(-1,0);
\draw[thick,->](0,0)--(1,0);
\draw[thick,->](0,0)--(-1,-1);
\draw[thick,->](0,0)--(0,-1);
\draw[thick,->](0,0)--(1,-1);
\end{tikzpicture}
&
\begin{tikzpicture}[scale=.2, baseline=(current bounding box.center)]
\foreach \x in {-1,0,1} \foreach \y in {-1,0,1} \fill(\x,\y) circle[radius=2pt];
\draw[thick,->](0,0)--(0,1);
\draw[thick,->](0,0)--(-1,0);
\draw[thick,->](0,0)--(0,-1);
\draw[thick,->](0,0)--(1,-1);
\end{tikzpicture}
\\
\begin{tikzpicture}[scale=.2, baseline=(current bounding box.center)]
\foreach \x in {-1,0,1} \foreach \y in {-1,0,1} \fill(\x,\y) circle[radius=2pt];
\draw[thick,->](0,0)--(1,0);
\draw[thick,->](0,0)--(-1,0);
\draw[thick,->](0,0)--(0,-1);
\draw[thick,->](0,0)--(-1,1);
\end{tikzpicture}
&
\begin{tikzpicture}[scale=.2, baseline=(current bounding box.center)]
\foreach \x in {-1,0,1} \foreach \y in {-1,0,1} \fill(\x,\y) circle[radius=2pt];
\draw[thick,->](0,0)--(0,1);
\draw[thick,->](0,0)--(-1,0);
\draw[thick,->](0,0)--(-1,-1);
\draw[thick,->](0,0)--(1,-1);
\end{tikzpicture}
&
\begin{tikzpicture}[scale=.2, baseline=(current bounding box.center)]
\foreach \x in {-1,0,1} \foreach \y in {-1,0,1} \fill(\x,\y) circle[radius=2pt];
\draw[thick,->](0,0)--(1,0);
\draw[thick,->](0,0)--(0,-1);
\draw[thick,->](0,0)--(-1,-1);
\draw[thick,->](0,0)--(-1,1);
\end{tikzpicture}
&
\begin{tikzpicture}[scale=.2, baseline=(current bounding box.center)]
\foreach \x in {-1,0,1} \foreach \y in {-1,0,1} \fill(\x,\y) circle[radius=2pt];
\draw[thick,->](0,0)--(-1,1);
\draw[thick,->](0,0)--(0,1);
\draw[thick,->](0,0)--(-1,-1);
\draw[thick,->](0,0)--(1,-1);
\end{tikzpicture}
&
\begin{tikzpicture}[scale=.2, baseline=(current bounding box.center)]
\foreach \x in {-1,0,1} \foreach \y in {-1,0,1} \fill(\x,\y) circle[radius=2pt];
\draw[thick,->](0,0)--(-1,1);
\draw[thick,->](0,0)--(1,0);
\draw[thick,->](0,0)--(-1,-1);
\draw[thick,->](0,0)--(1,-1);
\end{tikzpicture}
&
\begin{tikzpicture}[scale=.2, baseline=(current bounding box.center)]
\foreach \x in {-1,0,1} \foreach \y in {-1,0,1} \fill(\x,\y) circle[radius=2pt];
\draw[thick,->](0,0)--(-1,1);
\draw[thick,->](0,0)--(0,1);
\draw[thick,->](0,0)--(-1,0);
\draw[thick,->](0,0)--(1,-1);
\end{tikzpicture}
&
\begin{tikzpicture}[scale=.2, baseline=(current bounding box.center)]
\foreach \x in {-1,0,1} \foreach \y in {-1,0,1} \fill(\x,\y) circle[radius=2pt];
\draw[thick,->](0,0)--(-1,1);
\draw[thick,->](0,0)--(1,0);
\draw[thick,->](0,0)--(0,-1);
\draw[thick,->](0,0)--(1,-1);
\end{tikzpicture}
&
\begin{tikzpicture}[scale=.2, baseline=(current bounding box.center)]
\foreach \x in {-1,0,1} \foreach \y in {-1,0,1} \fill(\x,\y) circle[radius=2pt];
\draw[thick,->](0,0)--(0,1);
\draw[thick,->](0,0)--(-1,0);
\draw[thick,->](0,0)--(-1,-1);
\draw[thick,->](0,0)--(0,-1);
\draw[thick,->](0,0)--(1,-1);
\end{tikzpicture}
&
\begin{tikzpicture}[scale=.2, baseline=(current bounding box.center)]
\foreach \x in {-1,0,1} \foreach \y in {-1,0,1} \fill(\x,\y) circle[radius=2pt];
\draw[thick,->](0,0)--(1,0);
\draw[thick,->](0,0)--(0,-1);
\draw[thick,->](0,0)--(-1,-1);
\draw[thick,->](0,0)--(-1,0);
\draw[thick,->](0,0)--(-1,1);
\end{tikzpicture}
&
\begin{tikzpicture}[scale=.2, baseline=(current bounding box.center)]
\foreach \x in {-1,0,1} \foreach \y in {-1,0,1} \fill(\x,\y) circle[radius=2pt];
\draw[thick,->](0,0)--(-1,1);
\draw[thick,->](0,0)--(0,1);
\draw[thick,->](0,0)--(-1,-1);
\draw[thick,->](0,0)--(0,-1);
\draw[thick,->](0,0)--(1,-1);
\end{tikzpicture}
&
\begin{tikzpicture}[scale=.2, baseline=(current bounding box.center)]
\foreach \x in {-1,0,1} \foreach \y in {-1,0,1} \fill(\x,\y) circle[radius=2pt];
\draw[thick,->](0,0)--(-1,1);
\draw[thick,->](0,0)--(1,0);
\draw[thick,->](0,0)--(-1,-1);
\draw[thick,->](0,0)--(-1,0);
\draw[thick,->](0,0)--(1,-1);
\end{tikzpicture}
&
 \begin{tikzpicture}[scale=.2, baseline=(current bounding box.center)]
\foreach \x in {-1,0,1} \foreach \y in {-1,0,1} \fill(\x,\y) circle[radius=2pt];
\draw[thick,->](0,0)--(-1,1);
\draw[thick,->](0,0)--(0,1);
\draw[thick,->](0,0)--(-1,0);
\draw[thick,->](0,0)--(0,-1);
\draw[thick,->](0,0)--(1,-1);
\end{tikzpicture}
&
 \begin{tikzpicture}[scale=.2, baseline=(current bounding box.center)]
\foreach \x in {-1,0,1} \foreach \y in {-1,0,1} \fill(\x,\y) circle[radius=2pt];
\draw[thick,->](0,0)--(-1,1);
\draw[thick,->](0,0)--(1,0);
\draw[thick,->](0,0)--(-1,0);
\draw[thick,->](0,0)--(0,-1);
\draw[thick,->](0,0)--(1,-1);
\end{tikzpicture}
&
 \begin{tikzpicture}[scale=.2, baseline=(current bounding box.center)]
\foreach \x in {-1,0,1} \foreach \y in {-1,0,1} \fill(\x,\y) circle[radius=2pt];
\draw[thick,->](0,0)--(-1,1);
\draw[thick,->](0,0)--(0,1);
\draw[thick,->](0,0)--(-1,0);
\draw[thick,->](0,0)--(-1,-1);
\draw[thick,->](0,0)--(1,-1);
\end{tikzpicture}
&
 \begin{tikzpicture}[scale=.2, baseline=(current bounding box.center)]
\foreach \x in {-1,0,1} \foreach \y in {-1,0,1} \fill(\x,\y) circle[radius=2pt];
\draw[thick,->](0,0)--(-1,1);
\draw[thick,->](0,0)--(1,0);
\draw[thick,->](0,0)--(0,-1);
\draw[thick,->](0,0)--(-1,-1);
\draw[thick,->](0,0)--(1,-1);
\end{tikzpicture}
&
\begin{tikzpicture}[scale=.2, baseline=(current bounding box.center)]
\foreach \x in {-1,0,1} \foreach \y in {-1,0,1} \fill(\x,\y) circle[radius=2pt];
\draw[thick,->](0,0)--(-1,1);
\draw[thick,->](0,0)--(0,1);
\draw[thick,->](0,0)--(-1,0);
\draw[thick,->](0,0)--(-1,-1);
\draw[thick,->](0,0)--(0,-1);
\draw[thick,->](0,0)--(1,-1);
\end{tikzpicture}
\\
\begin{tikzpicture}[scale=.2, baseline=(current bounding box.center)]
\foreach \x in {-1,0,1} \foreach \y in {-1,0,1} \fill(\x,\y) circle[radius=2pt];
\draw[thick,->](0,0)--(-1,1);
\draw[thick,->](0,0)--(1,0);
\draw[thick,->](0,0)--(-1,0);
\draw[thick,->](0,0)--(-1,-1);
\draw[thick,->](0,0)--(0,-1);
\draw[thick,->](0,0)--(1,-1);
\end{tikzpicture}
&
\begin{tikzpicture}[scale=.2, baseline=(current bounding box.center)]
\foreach \x in {-1,0,1} \foreach \y in {-1,0,1} \fill(\x,\y) circle[radius=2pt];
\draw[thick,->](0,0)--(1,1);
\draw[thick,->](0,0)--(-1,0);
\draw[thick,->](0,0)--(-1,-1);
\draw[thick,->](0,0)--(0,-1);
\end{tikzpicture}
&
\begin{tikzpicture}[scale=.2, baseline=(current bounding box.center)]
\foreach \x in {-1,0,1} \foreach \y in {-1,0,1} \fill(\x,\y) circle[radius=2pt];
\draw[thick,->](0,0)--(0,1);
\draw[thick,->](0,0)--(1,1);
\draw[thick,->](0,0)--(1,0);
\draw[thick,->](0,0)--(-1,-1);
\end{tikzpicture}
& 
&
& 
&
&
&
&
&
&
&
&
&
&
\end{array}
\end{equation}

The ring $\CX[x,y]/(K(x,y,t))$ is an integral domain and we will denote its quotient field by $\CX(\Etproj)$. We will abuse notation and use $x$ and $y$ to denote the image of these variables in this field as well.  From the context it will be clear which sense is being used. 
\subsection{The Group} Since the polynomial $K(x,y)$  has degree $2$ in each variable, we can define two automorphisms of its zero set. Let $P = (a,b)$ satisfy $K(a,b) = 0$. The polynomial $K(a, y)$ has at most two roots $b, \tilde{b}$ (possibly $b = \tilde{b}$). We define $\iota_1(P) = (a,\tilde{b})$. Similarly, one can define $\iota_2(P) = (\tilde{a}, b)$ where $a, \tilde{a}$ are the roots of $K(x,b) = 0$. The maps $\iota_1, \iota_2$ are involutions which are induced by rational maps on $\CX \times \CX$ (formulas are given in \cite{BMM} and \cite{DHRS}) and so can be extended to involutions of $\Etproj(\CX)$, i.e., for any $P \in \Etproj$ we have
$$
\{P,\iota_1(P)\} = \Etproj \cap (\{x\} \times \P1(\C))
\text{ and }
\{P,\iota_2(P)\} = \Etproj \cap (\P1(\C) \times \{y\}).
$$
We furthermore define an an automorphism $\tau : \Etproj \rightarrow \Etproj$ by the formula
\[\tau := \iota_2\circ\iota_1. \]
\begin{defn} The {\rm group of the walk} is the group generated by $\iota_1, \iota_2$.\end{defn}
\begin{rmk}\label{rmk:automorphismonfunctionfield} The map $\iota_1$  induces an automorphism  of $\CX(\Etproj)$   via $\iota_1(f(Q)) = f(\iota_1(Q))$ for $Q \in \Etproj$ (we are abusing notation and using the same symbol for the map on $\Etproj$ and $\CX(\Etproj))$.  Similarly, $\iota_2$ and $\tau$ induce automorphisms of $\CX(\Etproj))$. One needs to be careful of the context when using these symbols.  In particular, $\tau = \iota_2\circ\iota_1$ on $\Etproj$ but $\tau = \iota_1\circ\iota_2$ on $\CX(\Etproj)$.\

The subfields $\CX(x)$ and $\CX(y)$ of $\CX(\Etproj)$ are pure transcendental extensions and are the fixed fields of $\ita$ and $\itb$ respectively. 
\end{rmk}

In {\cite{BBMR17}, the authors show that the group of the walk is finite if and only if there exists a nonconstant $g \in (\CX(x) \cap \CX(y))\subset \CX(\Etproj)$. When such a $g$ exists one says that the walk admits {\it invariants}. We give an equivalent property.

\begin{lem}\label{lem:inforder} 1.~The group $G$ of the walk is finite if and only if $\tau$ has finite order.\\
2.~The element $\tau$ has finite order if and only if there exists $f \in \CX(\Etproj)\backslash \CX$ such that $\tau(f) = f$.\\
3.~There exists a nonconstant $g \in (\CX(x) \cap \CX(y))\subset \CX(\Etproj)$ if and only if there exists $f \in \CX(\Etproj)\backslash \CX$ such that $\tau(f) = f$ \end{lem}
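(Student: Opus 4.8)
The three assertions are all facts about the group generated by the two involutions $\iota_1,\iota_2$, and I would prove them in the order (1), (2), (3). For (1) the argument is purely group‑theoretic: $G=\langle\iota_1,\iota_2\rangle$ is a quotient of the infinite dihedral group. If $G$ is finite then every element, in particular $\tau=\iota_2\circ\iota_1$, has finite order. Conversely, if $\tau$ has order $n$, I would note that $\iota_2=\iota_1\circ\tau^{-1}$, so $G=\langle\iota_1,\tau\rangle$, and that $\iota_1\tau\iota_1^{-1}=\iota_1\iota_2=\tau^{-1}$, whence $\langle\tau\rangle$ is normal in $G$ with cyclic quotient generated by the class of $\iota_1$, of order at most $2$; hence $|G|\le 2n<\infty$. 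One should also record that $\tau$ has the same order whether regarded as an automorphism of $\Etproj$ or of $\CX(\Etproj)$, since the two actions differ only by an anti‑automorphism of the group (this is precisely the content of the warning in Remark~\ref{rmk:automorphismonfunctionfield} that $\tau=\iota_2\circ\iota_1$ on $\Etproj$ but $\tau=\iota_1\circ\iota_2$ on $\CX(\Etproj)$), so (1) may be freely combined with (2) and (3).

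For (2), the implication ``$\tau$ of finite order $\Rightarrow$ an invariant exists'' is an averaging argument: if $\tau^n=\mathrm{id}$, form $\prod_{i=0}^{n-1}\bigl(T-\tau^i(x)\bigr)\in\CX(\Etproj)[T]$; since $\tau$ permutes $x,\tau(x),\dots,\tau^{n-1}(x)$ cyclically, all its coefficients are $\tau$‑invariant, and they cannot all lie in $\CX$, for then $x$ would be algebraic, hence constant, over the algebraically closed field $\CX$. So some coefficient is a nonconstant $\tau$‑invariant $f$. Conversely, given a nonconstant $\tau$‑invariant $f$: since $f$ is transcendental over $\CX$ and $\CX(\Etproj)$ is finitely generated of transcendence degree $1$, the extension $\CX(\Etproj)/\CX(f)$ is finite, of some degree $d$; as $\tau$ fixes $\CX(f)$ pointwise it lies in $\Aut(\CX(\Etproj)/\CX(f))$, a group of order at most $d$, so $\tau$ has finite order.

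For (3), I would use from Remark~\ref{rmk:automorphismonfunctionfield} that $\CX(x)$ and $\CX(y)$ are the fixed fields of $\iota_1$ and $\iota_2$, so $\CX(x)\cap\CX(y)$ is the fixed field of $\langle\iota_1,\iota_2\rangle$. If $g\in\CX(x)\cap\CX(y)$ is nonconstant it is fixed by $\iota_1$ and by $\iota_2$, hence by $\tau$, and I take $f=g$. Conversely, given nonconstant $f$ with $\tau(f)=f$: on the function field $\tau=\iota_1\circ\iota_2$, so $\tau(f)=f$ rewrites as $\iota_2(f)=\iota_1(f)$; then the trace $g_1:=f+\iota_1(f)$ and the norm $g_2:=f\,\iota_1(f)$ of the degree‑$2$ extension $\CX(\Etproj)/\CX(x)$ are visibly $\iota_1$‑invariant, and the relation $\iota_2(f)=\iota_1(f)$ gives $\iota_2(g_1)=g_1$ and $\iota_2(g_2)=g_2$, so $g_1,g_2\in\CX(x)\cap\CX(y)$; since $f$ is a root of $T^2-g_1T+g_2$, both $g_1,g_2$ being constant would force $f\in\CX$, so one of $g_1,g_2$ is the desired nonconstant invariant. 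Alternatively, once (1) and (2) are available, $\langle\iota_1,\iota_2\rangle$ is a finite group of automorphisms of $\CX(\Etproj)$ and Artin's theorem forces its fixed field to have transcendence degree $1$ over $\CX$.

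The only points demanding care, and the closest thing to an obstacle, are the bookkeeping of the two anti‑isomorphic group actions (the Remark's warning), and, at each ``$\Leftarrow$'' step, upgrading ``some invariant exists'' to ``some \emph{nonconstant} invariant exists''. The uniform device for the latter is that a simultaneously constant trace and norm would make the relevant element algebraic, hence constant, over the algebraically closed field $\CX$.
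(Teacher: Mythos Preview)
Your proof is correct and follows essentially the same strategy as the paper for parts (1) and (2): the index-$2$ observation for (1), and the characteristic-polynomial averaging plus the finite-degree/automorphism-group bound for (2). For the nontrivial implication in (3) you take a slightly different (and arguably cleaner) route: the paper considers the minimal polynomial of $x$ over the $\tau$-fixed field $\CX(\Etproj)^\tau$, checks that $\iota_1$ preserves $\CX(\Etproj)^\tau$ (using $\iota_1\tau=\tau^{-1}\iota_1$), deduces that the coefficients of this minimal polynomial are $\iota_1$-invariant and hence lie in $\CX(x)$, and then shows they are also $\iota_2$-invariant. You instead take the trace and norm of $f$ itself for the degree-$2$ extension $\CX(\Etproj)/\CX(x)$ and use the identity $\iota_2(f)=\iota_1(f)$ directly; this avoids having to verify that $\iota_1$ stabilizes $\CX(\Etproj)^\tau$, and the nonconstancy argument is the same in spirit. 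Both approaches are short and equivalent in strength.
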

\begin{proof} 1.~This follows from the fact that the group generated by $\tau$ has index $2$ in the group of the walk.\\
2.~Assume $\tau$ has finite order $n$. Let $\CX(\Etproj)^\tau$ be the field of invariants of $\tau$.  For any $f \in \CX(\Etproj)$, the polynomial 
\[P_f(X) = \prod_{i=0}^{n-1} (X-\tau^i(f))\]
has coefficients in $\CX(\Etproj)^\tau$ and therefore any element of $\CX(\Etproj)$ is algebraic over $\CX(\Etproj)^\tau$. Since $\CX(\Etproj)$ has transcendence degree $1$ over $\CX$, there must be an element in $ \CX(\Etproj)^\tau\backslash \CX$.

Now assume that there exists an $f \in \CX(\Etproj)\backslash \CX$ such that $\tau(f) = f$. Since $\CX(\Etproj)$ has transcendence degree $1$ over $\CX$, $x$ and $y$ must be algebraic over $\CX(f)$. Let  $P_x(X)\in \CX(f)[X]$ (resp. $P_y(X)\in \CX(f)[X]$) be the {monic} minimal polynomial of $x$ (resp. $y$) over $\CX(f)$ and let $S_x =\{\alpha \in \CX(\Etproj) \ | \ P_x(\alpha) = 0\}$ and $S_y =\{\alpha \in \CX(\Etproj) \ | \ P_y(\alpha) = 0\}$. The automorphism  $\tau$ permutes the elements of $S_x$ and the elements of $S_y$. Since these sets are finite sets, there is some positive integer $n$ such that $\tau^n$ leaves all the elements of these sets fixed.  In particular, $\tau^n$ leaves $x$ and $y$ fixed and so must be the identity.\\
3.~ Of course, \cite[Theorem 7]{BBMR17} and 2.~above yield this equivalence but we give a direct proof. If $g \in (\CX(x)\cap \CX(y))\backslash \CX$ then $\ita(g) = g$ and $\itb(g) = g$, so $\tau(g) = \ita\itb(g) = \ita(g) =  g$. Conversely assume that $f \in \CX(\Etproj)\backslash \CX$ such that $\tau(f) = f$. We then have that $f$ is transcendental over $\CX$ so $x$ is algebraic over $\CX(f) \subset \CX(\Etproj)^\tau$. Let $P(X)\in \CX(\Etproj)^\tau[X]$ be the minimal polynomial of $x$ and denote by $P^{\ita}(X)$ the polynomial resulting from applying $\ita$ to the coefficients of $P(X)$. One sees that the coefficients of $P^{\ita}(X)$ again lie in $\CX(\Etproj)^\tau$ so we must have that $P^{\ita}(X) = P(X)$ since they both have $x$ as a root. Therefore the coefficients of $P(x)$ are left fixed by $\ita$ (as well as by $\tau$)  {and thus} lie in $\CX(x)$. Not all of these coefficients lie in $\CX$ since $x$ is not algebraic over $\CX$ so  there exists $g \in \CX(x)$ such that $\tau(g) = g$.  We then have that $g = \ita(g) = \ita(\tau(g)) = \itb(g)$ so $g\in (\CX(x) \cap \CX(y))\backslash \CX$. \end{proof}

We have the following additional facts concerning the group of the walk  and its relation to the kernel curve.

\begin{itemize}
\item For a dense set of values of $t \in [0,1]$, this group is  finite for $23$ unweighted models  (as well as some of these models with weights). These have been shown to have generating series that are holonomic (or even algebraic).  \cite{BMM, BostanKauersTheCompleteGenerating,BBMR17}. 
\item For a dense set of values of $t \in [0,1]$, this group is infinite for the remaining $56$ weighted models. Furthermore,
\begin{itemize}
\item for the $51$  models with associated curve of genus $1$, there exists a point $P \in \Etproj$ such that the element $\tau$ of the group is given by 
\[ \tau(Q) = Q  \oplus P\]
where $\oplus$ denotes addition on the elliptic cuve $\Etproj$ {[Proposition 2.5.2 in \cite{DuistQRT}]. If $\tau^n(Q)=Q$ for some point $Q \in \Etproj$ and some integer $n \in \Z$, the automorphism $\tau^n$ is the identity. The fact that the group is infinite is also equivalent to  the point $P$  having infinite order in the group structure on $\Etproj$.  }
\item  for the $5$ weighted models with associated curve of genus $0$, there exists a rational map $\phi : \PX^1(\CX) \rightarrow \Etproj$ such that the pullback of $\tau$ is a $q$-dilation $z\mapsto qz$ for some $q\in \CX, |q| \neq1$.  
\end{itemize}
\end{itemize}

 A remaining question is: for which values of the weights are the models attached to the set of steps \ref{stepgenus1} differentially algebraic or $D$-algebraic for short.  If the group of the walk is finite, \cite[Theorem 42]{DreyRasc} shows that the generating series is holonomic. 
When the group is infinite and the models unweighted, the question was solved  case by case in \cite{BBMR17} and \cite{DHRS}. In the next sections of this paper, we will show that the $D$-algebraicity of weighted models with  genus one kernel curve is encoded by the position of the  \emph{base points} of a pencil of elliptic curves. This  gives a more geometric understanding of the differential behavior of the weighted models  and allows one to  produce an algorithm to test their $D$-algebraicity. 
\section{Decoupling pairs and certificates}\label{certificate}  In this section we compare the criteria  presented in \cite{BBMR17} and \cite{DHRS} ensuring that the generating series of a quadrant model is $D$-algebraic. We shall assume that the curve $\Etproj$ defined by $K(x,y,t) = 0$ is  an irreducible curve. 


\subsection{Decoupling pairs}In \cite[Definition 8]{BBMR17}, the authors  introduce the notion of a  {\it decoupling}.

\begin{defn} A quadrant model is {\rm decoupled} if there exist $f(x) \in k(x)$ and $g(x) \in k(y)$ such that $xy = f(x) + g(y)$ in $k(\Etproj)$. The functions $f(x)$ and $g(y)$ are said to form a {\rm decoupling pair} for $h(x,y)$. \end{defn}  

A main result  of \cite{BBMR17} is that, of the $79$ relevant unweighted quadrant models,  precisely $13$ are decoupled.  Of these, $9$, as in Figure~\ref{figure:ex},  correspond to models with infinite group and an additional $4$ have finite group.   The authors further  show that  those  models admitting an invariant and  having a decoupling pair   are precisely  the models having algebraic generating series.   For the $9$ decoupled unweighted models with infinite group, the authors give explicit expressions for the generating series and show that these series are $D$-algebraic.

  {\mfs The strategy of \cite{BBMR17} is to give an explicit expression of the  generating series in terms of a certain \emph{weak invariant}, which is written in terms of the  elliptic functions. This explicit expression allows one to find explicit differential algebraic equation for the  generating series. The approach of \cite{BBMR17}  should also work   for decoupled weighted model.}
}

 Without being as explicit as \cite{BBMR17}, we can indicate why these expressions exist. 
  Note that when the kernel curve has genus one, {the elliptic curve $\Etproj$ admits an uniformization of the form $\{ (x(\omega),y(\omega)) \mbox{ with } \omega \in \C/(\Z\omega_1 +\Z \omega_2) \}$ where $\omega_1, \omega_2$ are two  $\Z$-linearly independent  complex numbers. The functions $x(\omega),y(\omega)$ are rational functions of the Weierstrass functions $\wp_{1,2}, \wp_{1,2}'$ attached to the elliptic curve $\C/(\Z\omega_1 +\Z \omega_2)$.  The automorphism $\tau$ then lifts to $\C$ as a translation $\omega_3$. By \cite{DreyRasc}, the   generating series $F^1(x,t)$ and $F^2(x,t)$ can be lifted to the universal cover of $\Etproj$ as meromorphic function denoted by $r_x(\omega)$ and $r_y(\omega)$.  When the model is decoupled, one can express
$r_x(\omega)$  in terms of elliptic functions as follows. }
\begin{lemma}\label{lem:analyticcontinuationdecoupledexpression}
Assume that the weighted model is decoupled and has  a genus one kernel curve and infinite group of the walk. Let $f(x)$ and $g(y)$ be a decoupling pair for $xy$. Then, there exist a unique rational function $G(X,Y) \in \C(X,Y)$ such that 
$$r_x(\omega)= f(x(\omega)) + G(\wp_{1,3}(\omega), \wp'_{1,3}(\omega)),$$
where $\wp_{1,3}$ is the Weierstrass function attached to the elliptic curve $\C/(\Z\omega_1 +\Z \omega_3)$. 
\end{lemma}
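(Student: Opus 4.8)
The strategy is to exploit the functional equation~\eqref{fnceqn} lifted to the universal cover, together with the decoupling relation, to isolate a $\tau$-invariant-type condition satisfied by $r_x(\omega) - f(x(\omega))$, and then invoke the classification of functions on the elliptic curve $\C/(\Z\omega_1+\Z\omega_3)$. First I would recall from \cite{DreyRasc} that, after lifting to $\omega$, the functional equation relating $F^1$ and $F^2$ takes the form $r_x(\tau^{-1}\omega) - r_x(\omega) = $ (something explicit in $x(\omega), y(\omega)$), or more precisely that the function $b(\omega):= x(\omega)y(\omega) - r_x(\omega) - r_y(\tau^{-1}\omega)$ (up to the precise normalization and the $\tau$-shift conventions fixed in the earlier sections) is $\tau$-invariant on $\C$. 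Here $\tau$ acts as translation by $\omega_3$. Substituting the decoupling identity $x(\omega)y(\omega) = f(x(\omega)) + g(y(\omega))$ and using that $f(x(\omega))$ is $\iota_1$-invariant while $g(y(\omega))$ is $\iota_2$-invariant, one rearranges so that $h(\omega) := r_x(\omega) - f(x(\omega))$ satisfies $h(\omega + \omega_3) - h(\omega) = $ (an explicit elliptic function for the lattice $\Z\omega_1+\Z\omega_3$, or more simply is seen to be itself $\omega_1$-periodic and $\omega_3$-periodic after the cancellation). The key point is that the decoupling precisely removes the obstruction coming from $y(\omega)$, so that what remains is a genuinely meromorphic elliptic function for the sublattice $\Z\omega_1 + \Z\omega_3$.

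Concretely, the steps are: (1) write down the lifted functional equation from \cite{DreyRasc}, keeping careful track that $\tau$ lifts to $+\omega_3$ and that $x,y$ are $\omega_1$-periodic; (2) use decoupling to replace $xy$ and split the equation into an ``$x$-part'' and a ``$y$-part''; (3) deduce that $h(\omega) = r_x(\omega) - f(x(\omega))$ is $\Z\omega_1$-periodic (this uses $\iota_1$-invariance of $f(x)$ and the fact that $r_x$ already has the right periodicity in the $\omega_1$-direction coming from its origin as $F^1(x,t)$, a function of $x$ alone); (4) show $h$ is also $\Z\omega_3$-periodic by feeding the decoupling identity into the $\tau$-difference equation and checking the $y$-dependent terms telescope away; (5) conclude $h$ is meromorphic on $\C/(\Z\omega_1+\Z\omega_3)$, hence a rational function of $\wp_{1,3}$ and $\wp_{1,3}'$, which gives the existence of $G(X,Y)\in\C(X,Y)$ with $h(\omega) = G(\wp_{1,3}(\omega),\wp'_{1,3}(\omega))$. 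Uniqueness of $G$ is immediate since $(\wp_{1,3},\wp'_{1,3})$ generates the function field of that elliptic curve, so $G$ is determined as an element of that field.

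The main obstacle I anticipate is step (4): verifying that after substituting the decoupling relation, the terms involving $y(\omega)$ genuinely cancel in the $\omega_3$-difference of $h$, rather than merely simplifying. This requires the precise form of the lifted functional equation — in particular which of $r_x, r_y$ carries the $\tau$-shift and with what sign — and the compatibility between the $\iota_1,\iota_2$-invariance properties of $f,g$ on $k(\Etproj)$ and their lifts to $\C$. One must also be slightly careful that $r_x(\omega)$ and $f(x(\omega))$ have compatible pole structures so that $h$ is genuinely meromorphic (no essential singularities or accumulation of poles), but this follows because $r_x$ is meromorphic on the universal cover by \cite{DreyRasc} and $f(x(\omega))$ is a rational function of the Weierstrass data, hence meromorphic; their difference being doubly periodic for the sublattice then forces it to descend. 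A secondary bookkeeping point is keeping the two lattices $\Z\omega_1+\Z\omega_2$ (on which $x,y$ live) and $\Z\omega_1+\Z\omega_3$ (on which $G$ lives) distinct throughout; the function $h$ is periodic for the larger-index sublattice containing $\omega_1$ and $\omega_3$, which is exactly what makes $\wp_{1,3}$ the right uniformizer.
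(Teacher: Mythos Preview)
Your proposal is correct and follows essentially the same route as the paper: define $h(\omega)=r_x(\omega)-f(x(\omega))$, use the decoupling identity together with the lifted functional equation for $r_x$ to show that $h$ is $(\omega_1,\omega_3)$-periodic, and then invoke the representation of elliptic functions as rational functions of $\wp_{1,3},\wp'_{1,3}$. The paper dispatches your anticipated obstacle in step~(4) by the one-line computation $y(\omega+\omega_3)\bigl(x(\omega+\omega_3)-x(\omega)\bigr)=f(x(\omega+\omega_3))-f(x(\omega))$, which is exactly the cancellation you were looking for and follows directly from subtracting the decoupling relation at $\omega$ and at $\omega+\omega_3$.
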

\begin{proof}
Since the group of the walk is infinite, the automorphism $\tau$ had infinite order and the complex number $\omega_3$  is $\Z$-linearly independent with $\omega_1$ so that they both form a  $\Z$-lattice in $\C$. An easy computation shows that 
$y(\omega +\omega_3)(x(\omega +\omega_3) -x (\omega))= f(x(\omega+\omega_3)) -f(x(\omega)).$ Since $f(x(\omega)$ is $\omega_1$-periodic, we deduce from the functional equations satisfied by $r_x(\omega)$ that $r_x (\omega) - f(x(\omega))$ is a meromorphic functions that is $\omega_1,\omega_3$-periodic. It is therefore an elliptic function with respect to the elliptic curve $\C//(\Z\omega_1 +\Z \omega_3)$. We conclude the proof via the characterization of elliptic functions in terms of Weierstrass functions.
\end{proof}


\subsection{Certificates}\label{sec:certificates} 
\begin{defn}\label{def:certificate} Let $K$ be a field, $\tau$ an automorphism of $K$ and $f \in K$.  We say that $g$ is a {\rm certificate} for $f$ if
\[ f = \tau(g) - g.\]
\end{defn}
This terminology comes from a similar term used in the theory of telescopers and certificates for deriving and verifying combinatorial identities \cite{BCCL10,WZ90}.
In \cite[Section 2.2]{DHRS19} the authors, using a result of Ishizaki \cite{Ishi}, show
\begin{prop} \label{cor:genus0} Assume that the kernel curve of a weighted quadrant model  $\Etproj$ has genus $0$ and has infinite group. $F^1(x,t) = -K(x,0,t)Q(x,0,t)$ and $F^2(y,t) = -K(0,y,t)Q(0,y,t)$ are $D$-algebraic if and only if   the element $b = x(\iota_1(y) - y) \in \CX(\Etproj)$  has a certificate in $\CX(\Etproj)$, i.e., there exists  $g\in \CX(\Etproj)$ such that 
\begin{align}
b &= \tau(g) - g.
\end{align}
\end{prop}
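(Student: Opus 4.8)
The plan is to restrict the functional equation \eqref{fnceqn} to the kernel curve, exhibit $F^2(y,t)$ (suitably analytically continued) as a meromorphic certificate for $b$, and then show that the existence of a genuinely \emph{rational} certificate is equivalent to the $D$-algebraicity of $F^1,F^2$; the two implications will be handled respectively by the classical theory of elliptic functions and by Ishizaki's theorem.

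First I would set $c:=td_{-1,-1}Q(0,0,t)\in\CX$ and evaluate \eqref{fnceqn} along $\Etproj$, where $K(x,y,t)=0$. Using the analytic continuation of $F^1,F^2$ to (the universal cover of) $\Etproj$ furnished by \cite{DreyRasc} (cf.\ the discussion preceding Lemma~\ref{lem:analyticcontinuationdecoupledexpression}), this gives the identity $xy = F^1(x,t)+F^2(y,t)-c$ of meromorphic functions on $\Etproj$, in which $F^1(x,t)$ (resp.\ $F^2(y,t)$) depends only on the $x$- (resp.\ $y$-) coordinate and is hence fixed by $\ita$ (resp.\ $\itb$). Applying $\ita$ and subtracting the original identity yields
\[ b = x\bigl(\ita(y)-y\bigr) = \ita\bigl(F^2(y,t)\bigr)-F^2(y,t) = \tau\bigl(F^2(y,t)\bigr)-F^2(y,t), \]
the last equality because $\itb$ fixes $F^2(y,t)$ while $\tau=\ita\circ\itb$ on $\CX(\Etproj)$ (Remark~\ref{rmk:automorphismonfunctionfield}). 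Thus $b$ always admits the meromorphic certificate $F^2(y,t)$, and the real issue is whether a certificate exists \emph{inside} $\CX(\Etproj)$.

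For the ``if'' direction, suppose $b=\tau(g)-g$ with $g\in\CX(\Etproj)$. Then $F^2(y,t)-g$ is $\tau$-invariant. Via the rational parametrization $\phi\colon\PX^1(\CX)\to\Etproj$, which identifies $\CX(\Etproj)$ with $\CX(z)$ and under which $\tau$ corresponds to the $q$-dilation $z\mapsto qz$ with $|q|\neq1$, and then setting $z=e^{2\pi i u}$, the function $F^2(y,t)-g$ becomes an elliptic function of $u$ with respect to the lattice generated by $1$ and $\tfrac{1}{2\pi i}\log q$; it is therefore a rational function of the associated Weierstrass $\wp,\wp'$, which are $D$-algebraic over $\CX(z)$, and hence $F^2(y,t)-g$ is $D$-algebraic over $\CX(\Etproj)$. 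Since $g$ is rational, $F^2(y,t)$ and then $F^1(x,t)=xy-F^2(y,t)+c$ are $D$-algebraic over $\CX(\Etproj)$; propagating this along the finite maps $z\mapsto x(z)$ and $z\mapsto y(z)$ shows $F^1(x,t)$ and $F^2(y,t)$ are $D$-algebraic (equivalently, so are $Q(x,0,t)$ and $Q(0,y,t)$).

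For the converse, if $F^1,F^2$ are $D$-algebraic then, transporting back to $\Etproj$ and using that $\Etproj$ has genus $0$ (so $\CX(\Etproj)=\CX(z)$), $F^2(y,t)$ is a $D$-algebraic meromorphic solution of the first-order $q$-difference equation $h(qz)-h(z)=b(z)$ with $b\in\CX(z)$. Here I would invoke Ishizaki's transcendence theorem \cite{Ishi}: such an equation admits a $D$-algebraic meromorphic solution only when $b$ is itself of the form $g(qz)-g(z)$ for some $g\in\CX(z)$, which on $\Etproj$ reads $b=\tau(g)-g$ with $g\in\CX(\Etproj)$. This converse is the heart of the matter; the remaining ingredients --- the restriction of \eqref{fnceqn}, the classical description of elliptic functions, and the routine propagation of $D$-algebraicity through finite extensions --- are comparatively soft, and the genuine obstacle is Ishizaki's result excluding $D$-algebraic solutions whenever the inhomogeneous term fails to be a rational coboundary.
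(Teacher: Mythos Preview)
Your sketch is correct and mirrors the argument of \cite[\S2.2]{DHRS19}, which the paper simply cites in place of a proof; the derivation of $b=\tau(F^2)-F^2$ from the functional equation, the elliptic-function argument for the ``if'' direction via $z=e^{2\pi i u}$, and the appeal to Ishizaki for the converse are exactly the ingredients used there. Two minor caveats worth tightening: the analytic continuation you need in genus~$0$ is set up in \cite{DHRS19} itself (the reference \cite{DreyRasc} and the discussion around Lemma~\ref{lem:analyticcontinuationdecoupledexpression} concern the genus~$1$ uniformization), and your paraphrase of Ishizaki's theorem---``a $D$-algebraic meromorphic solution forces $b$ to be a rational $\tau$-coboundary''---is slightly stronger than what \cite{Ishi} states directly; the clean equivalence is obtained in \cite{DHRS19} by feeding Ishizaki's hypertranscendence result into the parametrized difference Galois machinery, so the attribution should be to that combination rather than to \cite{Ishi} alone.
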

In the genus $1$ case and for unweighted models, the authors of \cite{DHRS}  proved a  slightly weaker result   {  The following proposition shows how this latter result can be reproduced word for word for weighted models.  We  will just  sketch the  proof since its only new ingredient relies on the uniformization results of \cite{DreyRasc} for weighted models, which  allows the direct use of the Galois theoretic tools of \cite{DHRS}.}

 If $D$ is a divisor of $\Etproj$, we will denote by $\calL(D)$ the finite dimensional $\CX$-space $\{ f \ | \ (f) + D \geq 0\}$ where $(f)$ is the divisor of $f$. Recall that there exists a point $P \in \Etproj(\CX)$ such that $\tau(Q) = Q \oplus P$ for all $Q\in \Etproj(\CX)$.

\begin{prop}\label{cor:genus1} Assume that the kernel curve $\Etproj$ of a weighted quadrant model  is of genus $1$ and has infinite group. We then have   that  $F^1(x,t) = -K(x,0,t)Q(x,0,t)$ and $F^2(y,t) = -K(0,y,t)Q(0,y,t)$ are $D$-algebraic if and only if   there exits  $g\in \CX(\Etproj)$, a $Q \in \Etproj$  and an $h \in \calL(Q + \tau(Q))$

such that
\begin{align}\label{eq:gen1dalg}
b &= \tau(g) - g + h.
\end{align}
where $b = x(\iota_1(y) - y) \in \CX(\Etproj)$.
\end{prop}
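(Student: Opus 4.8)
The statement is the weighted analogue of the genus-one criterion of \cite{DHRS}, and the paper explicitly signals that the proof should follow \cite{DHRS} ``word for word'' once the uniformization of \cite{DreyRasc} is in place. So the plan is to reduce everything to the geometric/Galois-theoretic setting of \cite{DHRS} via the lift to the universal cover, and then invoke the telescoping criterion there. First I would recall that by \cite{DreyRasc} the generating series $F^1(x,t)$ and $F^2(y,t)$ lift to meromorphic functions $r_x(\omega)$ and $r_y(\omega)$ on $\CX/(\Z\omega_1+\Z\omega_3)$, and that they satisfy functional equations of the shape $r_x(\omega) - r_x(\tau\omega) = $ (something explicit in $x(\omega),y(\omega)$) coming from the kernel functional equation \eqref{fnceqn}; combining the two equations one obtains a single $\tau$-difference equation whose right-hand side is exactly $b = x(\iota_1(y)-y)\in\CX(\Etproj)$, up to the contribution of the term $x y$. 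This is the computation already sketched in Lemma~\ref{lem:analyticcontinuationdecoupledexpression} and in \cite{DHRS}.

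Next I would spell out the dictionary ``$D$-algebraic over $\CX(t)$ $\Leftrightarrow$ $D$-algebraic as a function on the curve'': since $t$ is fixed transcendental, the field $\CX(\Etproj)$ carries a derivation (induced from $\partial_x$ or equivalently from $\partial_\omega$ on the cover), and $F^1,F^2$ are $D$-algebraic iff $r_x,r_y$ are. Then the key tool is the result (from \cite{DHRS}, built on Ishizaki's theorem \cite{Ishi} in the genus-zero case and its elliptic-function analogue in genus one) that a solution of an inhomogeneous first-order $\tau$-difference equation $\tau(r) - r = b$ over $\CX(\Etproj)$ is $D$-algebraic precisely when the ``obstruction'' $b$ can be written as $\tau(g)-g$ modulo a controlled error term — and in the genus-one, infinite-group case the error term is forced to live in a two-dimensional linear system $\calL(Q+\tau(Q))$ for some point $Q$, because $\tau$ is a translation and the relevant orbit-residue / integrability obstruction is detected by a degree-two divisor. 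Concretely: ($\Leftarrow$) if $b = \tau(g)-g+h$ with $h\in\calL(Q+\tau(Q))$, then $r_x - g$ satisfies $\tau(\cdot)-(\cdot) = h$ with $h$ having poles only at $Q$ and $\tau(Q)$, and one shows — via the explicit Weierstrass-function expression as in Lemma~\ref{lem:analyticcontinuationdecoupledexpression}, or via the telescoper construction — that such a function is $D$-algebraic; hence $r_x$, and therefore $F^1$ (and symmetrically $F^2$), is $D$-algebraic. ($\Rightarrow$) Conversely, if $F^1,F^2$ are $D$-algebraic, the Galois-theoretic dichotomy of \cite{DHRS} (parametrized/difference Galois theory: a first-order $\tau$-equation has either a ``finite-dimensional'' solution, forcing the decomposition above, or the solution is differentially transcendental) gives the decomposition \eqref{eq:gen1dalg}.

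\textbf{Where the work concentrates.} The genuinely new content — and the main obstacle — is not the Galois-theoretic core (which is imported from \cite{DHRS}) but verifying that the hypotheses of those results are met in the weighted setting: one must check that the weighted kernel curve, for generic transcendental $t$ and in the infinite-group genus-one case, admits the uniformization with $\tau$ a translation $\omega_3$ that is $\Z$-independent from $\omega_1$ (this is Proposition 2.5.2 of \cite{DuistQRT} plus \cite{DreyRasc}), that $r_x,r_y$ exist with the stated periodicity and pole structure, and that the functional equation reduces to $\tau(g)-g = b$ with the \emph{same} $b = x(\iota_1(y)-y)$ as in the unweighted case. Once these ``infrastructure'' facts are confirmed, the divisor-theoretic bookkeeping that pins the error term to $\calL(Q+\tau(Q))$ (rather than a larger space) is identical to \cite{DHRS}: it comes from analyzing the poles of $b$ on $\Etproj$ and of the putative $g$, using that $\tau$ shifts divisors by $P$, and isolating the part of $b$ that cannot be telescoped. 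I would therefore structure the written proof as: (i) recall uniformization and the lift of $F^1,F^2$; (ii) derive the single $\tau$-difference equation with right side $b$; (iii) cite the $D$-algebraicity criterion of \cite{DHRS} / \cite{DHRS19} for first-order $\tau$-equations, noting it applies verbatim since only the uniformization input changed; (iv) conclude the equivalence, reading off the shape $\tau(g)-g+h$ with $h\in\calL(Q+\tau(Q))$ from the orbit-residue analysis. Because the excerpt itself says the proof will ``just be sketched,'' I would keep steps (iii)–(iv) at the level of pointers to \cite{DHRS}, and devote the detail to (i)–(ii), which is where the weighted case actually requires argument.
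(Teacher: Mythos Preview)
Your proposal is correct and follows essentially the same approach as the paper: lift $F^1,F^2$ to meromorphic functions $r_x,r_y$ on the universal cover via the uniformization of \cite{DreyRasc}, extract the first-order $\tau$-difference equation with right-hand side $b$, and then invoke the Galois-theoretic criterion of \cite{DHRS} (specifically Propositions~3.6 and~B.5 there, together with Lemmas~6.3--6.4 for the transfer between $D$-algebraicity in $\omega$ and in $y$). One small slip to fix when you write it up: $r_x,r_y$ are meromorphic on $\CX$ (and $\omega_1$-periodic) but \emph{not} on $\CX/(\ZX\omega_1+\ZX\omega_3)$---their failure to be $\omega_3$-periodic is precisely what encodes the difference equation $r_y(\omega+\omega_3)-r_y(\omega)=b(x(\omega),y(\omega))$.
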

\begin{proof}
 Let us assume that $F^1(x,t)$ and $F^2(y,t)$ are  $D$-algebraic over $\C$
 By \cite[Proposition 2.8]{DreyRasc}, there exist 
$\omega_1,\omega_2$ two $\Z$-linearly independent  complex numbers and two meromorphic functions $x(\omega), y(\omega)$ such that the elliptic curve $\Etproj$ admits a uniformization of the form $\{ (x(\omega),y(\omega)) \mbox{ with } \omega \in \C/(\Z\omega_1 +\Z \omega_2) \}$. The functions $x(\omega),y(\omega)$ are rational functions of the Weierstrass functions $\wp_{1,2}, \wp_{1,2}'$ attached to the elliptic curve $\C/(\Z\omega_1 +\Z \omega_2)$. Therefore, $y(\omega)$ is $D$-algebraic over $\C$.
Evaluating \eqref{F1} on a certain open subset of $\Etproj$, the authors  of \cite{DreyRasc} were able to show that the function $F^2(y,t)$ can be lifted to a meromorphic function $r_y(\omega)$ on  the universal cover of $\Etproj$  such that 
\begin{itemize}
\item $r_y(\omega)$ coincides with $F^2(y(\omega),t)$ on a nonempty  open subset $\mathcal{D}_{x,y}$ (\cite[Lemma 24]{DreyRasc});
\item $r_y(\omega +\omega_1)= r_y(\omega)$
\item $r_y(\omega +\omega_3)= r_y(\omega) + b \circ (x( \omega),y(\omega))$ 
\end{itemize} where $\omega_3$ is a complex number such that the automorphism $\tau$ lifts to the universal cover as the translation by $\omega_3$\footnote{There is a discrepency in signs between \cite{DreyRasc} and us. We choose $F^1(x,t)= -Q(x,0,t)K(x,0,t)$ and they choose the opposite.}. By \cite[Lemma 6.3]{DHRS}, the function $r_y(\omega)$, which coincides with $F^2 \circ y(\omega)$ on some open set is $\omega$-$D$-algebraic. 
By \cite[Proposition  3.6 and Proposition B.5]{DHRS}, there exits  $g\in \CX(\Etproj)$, a $Q \in \Etproj$  and an $h \in \calL(Q + \tau(Q))$
such that $b = \tau(g) - g + h.$ Conversely, if $b=\tau(g)-g +h$ then \cite[Proposition B.5]{DHRS} implies the existence of $L \in \C[\frac{d}{d\omega}]$ such that $L (b \circ (x( \omega),y(\omega))=g(\omega +\omega_3) -g(\omega)$ for some $g \in \C(\Etproj)$, the latter field being identified 
with the field of meromorphic functions that are $(\omega_1, \omega_2)$-periodic. From the functional equations satisfied by $r_y$, one  obtains that the function $L(r_y)-g$ is $(\omega_1,\omega_3)$-periodic. Since elliptic functions are differentially algebraic over $\C$, the functions
$L(r_y)-g$ and $g$ are differentially algebraic over $\C$ and so is $r_y$. \cite[Lemma 6.4]{DHRS} allows one to conclude that, since 
$F^2(y,t)=r_y(y^{-1}(\omega))$ on some open set, the function $F^2(y,t)$ is $y$-$D$-algebraic over $\C$. By \cite[Proposition 3.10]{DHRS}, the function
$F^1(x,t)$ is also $x$-$D$-algebraic over $\C$.
\end{proof}

\begin{rem}  In \cite{dreyfus2019length}, the authors show that if a {weighted} quadrant model  has a generating series that is neither $x$- nor  $y$-$D$-algebraic, then the generating series is also $t$-$D$-transcendental. \end{rem}

In fact, one can further improve Proposition~\ref{cor:genus1} so that the condition \eqref{eq:gen1dalg} is replaced with the simpler {\it $b$ has a certificate in $\CX(\Etproj)$}, making the condition uniform for genus $0$ and $1$. 

 Note that $\iota_1(x) = x$ so for $b = x(\iota_1(y) - y)$, { one has $\iota_1(b) = -b$}. We refer to Appendix~\ref{appendix:PR} for the required facts concerning poles and residues.

\begin{lem}\label{cor:genus1b} Let $\Etproj$ be of genus $1$ and $b \in \CX(\Etproj)$ such that $\iota_1(b) = -b$. {Assume that the group of the walk is infinite.}  If there exist a $g\in \CX(\Etproj)$,  a $Q \in \Etproj$  and an $h \in \calL(Q + \tau(Q))$ such that
\begin{align}
b &= \tau(g) - g + h.
\end{align} 
then there exists a $\tilde{g} \in \CX(\Etproj)$ such that 
\begin{align}
b &= \tau(\tilde{g}) - \tilde{g}. 
\end{align}

\end{lem}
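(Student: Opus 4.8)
The plan is to take the decomposition $b = \tau(g) - g + h$ with $h \in \calL(Q + \tau(Q))$ and show that the "error term" $h$ can be absorbed into a coboundary $\tau(\tilde g) - \tilde g$, up to adjusting $g$. The key point is that $h$ has at most two simple poles, at $Q$ and $\tau(Q)$, so it is, up to a constant, of a very restricted shape: writing $t_Q$ for a function with a simple zero at $Q$ and no other zero or pole constraints relevant here, a function in $\calL(Q + \tau(Q))$ is a $\CX$-linear combination of $1$ and a function with simple poles exactly at $Q$ and $\tau(Q)$ (when $Q \neq \tau(Q)$; the case $Q = \tau(Q)$, giving a double pole, is handled similarly using that $\tau$ has no fixed points since the group is infinite). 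I would first observe that we may discard the constant part of $h$: a constant $c$ is $\tau(g_0) - g_0$ for... no — constants are \emph{not} coboundaries in general — so instead I note that $\iota_1(b) = -b$ forces a parity constraint. Applying $\iota_1$ and using $\tau = \iota_1 \iota_2$ on the function field (Remark~\ref{rmk:automorphismonfunctionfield}), one gets $-b = \iota_1(\tau(g)) - \iota_1(g) + \iota_1(h)$; combining this with the original relation and $\tau^{-1} = \iota_1 \iota_2^{-1}\iota_1$-type manipulations, one can symmetrize and arrange that the obstruction $h$ satisfies its own parity relation, which kills the constant term and pins down its divisor.

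The heart of the argument is a residue/height computation on the elliptic curve. A function of the form $h \in \calL(Q + \tau(Q))$ with divisor $\geq -(Q) - (\tau(Q))$ can be written (after subtracting a constant and scaling) as $h = \lambda\bigl(\varphi_Q - \varphi_{\tau(Q)}\bigr)$ where $\varphi_R$ denotes a fixed rational function with a simple pole at $R$; but more usefully, since $\tau(Q) = Q \oplus P$, I claim $h$ is cohomologous to a multiple of the function $\psi$ with divisor $(\tau(Q)) + (O) - (Q) - (P_0)$ for a suitable auxiliary point — the point being to engineer a telescoping. Concretely: I would look for $\tilde g_1 \in \CX(\Etproj)$ with $\tau(\tilde g_1) - \tilde g_1 = h$. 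By the standard theory (Abel–Jacobi / the exact sequence computing $H^1$ of the $\tau$-action on $\CX(\Etproj)^\times$ or on $\CX(\Etproj)$), solving $\tau(\tilde g_1) - \tilde g_1 = h$ for $h$ with a single simple pole at $Q$ is possible precisely when the "residue obstruction" along the $\tau$-orbit of $Q$ vanishes, and a simple pole with nonzero residue at a single orbit point is exactly the obstructed case — but here $h$ has poles at $Q$ \emph{and} $\tau(Q) = \tau(Q)$, which lie in the \emph{same} $\tau$-orbit, and crucially with opposite residues (forced by $h$ having degree-zero polar divisor after removing constants). Two opposite-residue simple poles at consecutive points $Q, \tau(Q)$ of an orbit form a telescoping coboundary: $h = \tau(\tilde g_1) - \tilde g_1$ where $\tilde g_1$ has a simple pole only at $Q$. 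This is the key computation and I expect it to be the main obstacle — one must verify the residues at $Q$ and $\tau(Q)$ are genuinely negatives of each other, which uses the residue theorem on $\Etproj$ together with the parity relation $\iota_1(b) = -b$ established in the first step to exclude a leftover constant that would break the argument.

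Granting that, I finish by setting $\tilde g = g + \tilde g_1$, so that
\[ \tau(\tilde g) - \tilde g = \bigl(\tau(g) - g\bigr) + \bigl(\tau(\tilde g_1) - \tilde g_1\bigr) = (b - h) + h = b, \]
which is the desired conclusion. The only subtlety to double-check is the degenerate case $Q = \tau(Q)$: since the group of the walk is infinite, $P$ has infinite order on $\Etproj$, so $\tau$ has no periodic points and in particular $Q \neq \tau(Q)$ always — so this case does not arise, and the hypothesis on infinitude of the group is used exactly here (and implicitly in invoking the elliptic uniformization throughout).

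I would structure the write-up as: (i) reduce to $h$ with polar divisor of degree $\leq 2$ supported on $\{Q, \tau(Q)\}$ and use $\iota_1(b) = -b$ to normalize away the constant; (ii) compute residues and show $\operatorname{Res}_Q h = -\operatorname{Res}_{\tau(Q)} h$; (iii) telescope to get $\tilde g_1$ with $\tau(\tilde g_1) - \tilde g_1 = h$; (iv) conclude with $\tilde g = g + \tilde g_1$. The facts on local parameters, poles, and orbit residues recalled in Appendix~\ref{appendix:PR} are what make step (ii)–(iii) precise.
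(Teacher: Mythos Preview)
Your telescoping step is where the argument breaks. You propose to write $h = \tau(\tilde g_1) - \tilde g_1$ where $\tilde g_1$ has a single simple pole. On a curve of genus~$1$, Riemann--Roch gives $\dim_\CX \calL(R) = 1$ for any single point $R$, so the only functions with at most one simple pole are constants; your $\tilde g_1$ does not exist. One can push further: if $\tilde g_1$ has any finite nonempty set of poles, then $\tau(\tilde g_1) - \tilde g_1$ acquires a pole at the $\tau^{-1}$-shift of the ``last'' pole of $\tilde g_1$ along each orbit, and matching the polar divisor of $h$ forces this set to collapse back to a single point. So a nonconstant $h \in \calL(Q + \tau(Q))$ is \emph{never} of the form $\tau(\tilde g_1) - \tilde g_1$, even though its orbit residues vanish. (This is consistent with the paper: the orbit-residue criterion, Proposition~\ref{Prop2}, is proved \emph{using} the present lemma, so you cannot invoke it here.) Your instinct that ``opposite residues at consecutive orbit points telescope'' is correct in genus~$0$, where $1/(z-a)$ is available, but fails in genus~$1$ precisely because of this Riemann--Roch obstruction.

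The paper's route is the one you gesture at in step~(i) but do not carry out: symmetrize the \emph{entire} relation $b = \tau(g) - g + h$ under $\iota_1$. Using $\iota_1\tau = \iota_2$ and $\tau\iota_2 = \iota_1$ on $\CX(\Etproj)$, one gets
\[
2b = b - \iota_1(b) = \tau\bigl(g + \iota_2(g)\bigr) - \bigl(g + \iota_2(g)\bigr) + \bigl(h - \iota_1(h)\bigr).
\]
Now the obstruction is $\hat h := h - \iota_1(h)$, which satisfies $\iota_1(\hat h) = -\hat h$ and has (generically) \emph{four} simple poles $Q,\ \tau(Q),\ \iota_1(Q),\ \tau^{-1}(\iota_1(Q))$. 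The point is that $\iota_1(Q)$ and $\tau(Q)$ are two \emph{distinct} points, so Riemann--Roch produces a nonconstant $f \in \calL(\iota_1(Q) + \tau(Q))$; then $\tau(f) - f$ has exactly the same four poles and, after scaling $f$, the same residues as $\hat h$. Hence $\hat h = \tau(f) - f + d$ with $d \in \CX$, and a second symmetrization (using $\iota_1(\hat h) = -\hat h$) kills $d$. The degenerate case $\iota_1(Q) = \tau(Q)$ is handled separately: then $\hat h$ has no poles, hence is a constant, and $\iota_1$-antisymmetry forces it to be zero. So the missing idea is: do not try to telescope $h$ itself; first pass to $h - \iota_1(h)$, which both admits the two-pole auxiliary $f$ and carries the $\iota_1$-parity needed to dispose of the constant.
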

\begin{proof} Note that $\tau = \iota_1 \iota_2, \iota_1\tau = \iota_2$, and $\tau\iota_2 = {\iota_1}$ on $\CX(\Etproj)$ so
\begin{align}\label{eq:b}
2b= b-\iota_1(b) = \tau(g + \iota_2(g))  - (g + \iota_2(g)) + (h - \iota_1(h))
\end{align}
If $h \in \CX$, we have that $b = \tau(\tilde{g}) - \tilde{g}$ where $\tilde{g} = \frac{g+\ita(g)}{2}$.

If $h \notin \CX$, then it will be sufficient to prove that there exists an $\htilde \in \CX(\Etproj)$ such that $h- \ita(h) = \tau(\htilde) - \htilde$.  Lemma~\ref{resthm} implies that the configuration of poles and residues of $h$ is the following

$$
\begin{array}{|c|c|c|}
\hline
\mbox{Divisor} &  Q & \tau(Q) \\ \hline
\mbox{Residues of order 1} &  \alpha & -\alpha  \\ \hline
\end{array}
$$
for some $a\in \CX^*$.  Since $\ita$ is an involution of the curve, Lemma~\ref{lem:horesidues}.1 implies that the configuration of poles and residues of $-\ita(h)$ is
$$
\begin{array}{|c|c|c|}
\hline
\mbox{Divisor} &  \tau^{-1}(\ita(Q)) & \ita(Q) \\ \hline
\mbox{Residues of order 1} &  -\alpha & \alpha  \\ \hline
\end{array}
$$
If $\ita(Q) = \tau(Q)$, then  the function $\hat h = h - \ita(h)$ {has no poles and is therefore } constant. Note that $a$ may not equal  zero but the poles of $h$ and $\ita(h)$ cancel. Since $\hat h = -\ita(\hat h)$,{the constant} $\hat h$ must be zero and so from (\ref{eq:b}) we can conclude that $b = \tau(\gtilde) - \gtilde$ where $\gtilde = \frac{g+\itb(g)}{2}$. 

If $\ita(Q) \neq \tau(Q)$  the configuration of poles and residues of $h -\ita(h)$ is
$$
\begin{array}{|c|c|c|c|c|}
\hline
\mbox{Divisor} &  \tau^{-1}(\ita(Q)) & \ita(Q) & Q & \tau(Q)\\ \hline
\mbox{Residues of order 1} &  -\alpha & \alpha &\alpha &-\alpha \\ \hline
\end{array}
$$
The point $Q$ may coincide with $\ita(Q)$ and so { the residue there may be $2\alpha$} but this will not change the reasoning below.  Since  $\ita(Q) \neq \tau(Q)$, the Riemann-Roch Theorem implies that there exist an $f\in \CX(\Etproj)$ with simple poles at these points and whose configuration of poles and residues is
$$
\begin{array}{|c|c|c|}
\hline
\mbox{Divisor} &  \ita(Q) & \tau(Q) \\ \hline
\mbox{Residues of order 1} & - \alpha &\alpha  \\ \hline
\end{array}
$$
The configuration of poles and residues of $\tau(f) - f$ and of $h-\ita(h)$ are the same. Therefore $\hat h := h-\ita(h) = \tau(f) - f + d$ for some $d\in \CX$.  Since  $\ita(\hat h) = -\hat h$, we have, via an argument similar to the argument involving (\ref{eq:b}), that $\hat h = h - \ita(h) = \tau(\htilde) -\htilde$ where {$\htilde = \frac{ f + \itb(f)}{2}$}. \end{proof}

We therefore can give a uniform statement for the generating series of weighted quadrant models
\begin{thm}  Assume that the kernel curve of a weighted quadrant model  $\Etproj$   has infinite group. $F^1(x,t) = -K(x,0,t)Q(x,0,t)$ and $F^2(y,t) = -K(0,y,t)Q(0,y,t)$ are $D$-algebraic if and only if   the element $b = x(\iota_1(y) - y) \in \CX(\Etproj)$  has a certificate in $\CX(\Etproj)$, i.e., there exists  $g\in \CX(\Etproj)$ such that 
\begin{align}
b &= \tau(g) - g.
\end{align}
\end{thm}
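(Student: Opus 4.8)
The plan is to obtain this uniform statement by assembling the genus-specific results already established, so I would start by splitting on the genus of $\Etproj$. If $\Etproj$ has genus $0$, there is nothing to do: Proposition~\ref{cor:genus0} is exactly this equivalence. So assume $\Etproj$ has genus $1$ (and, as in the hypothesis, that the group of the walk is infinite).

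For the forward implication in the genus $1$ case, suppose $F^1$ and $F^2$ are $D$-algebraic. Proposition~\ref{cor:genus1} then produces $g \in \CX(\Etproj)$, a point $Q \in \Etproj$, and an $h \in \calL(Q + \tau(Q))$ with $b = \tau(g) - g + h$. As observed just before Lemma~\ref{cor:genus1b}, the identity $\iota_1(x) = x$ together with the fact that $\iota_1$ exchanges the two points of $\Etproj$ lying over a fixed value of $x$ gives $\iota_1(b) = -b$. Hence all the hypotheses of Lemma~\ref{cor:genus1b} hold, and that lemma supplies a $\tilde{g} \in \CX(\Etproj)$ with $b = \tau(\tilde{g}) - \tilde{g}$, that is, a certificate for $b$.

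For the reverse implication, suppose $b = \tau(g) - g$ with $g \in \CX(\Etproj)$. Taking $h = 0$, which belongs to $\calL(Q + \tau(Q))$ for any $Q \in \Etproj$, places us in the hypotheses of the converse direction of Proposition~\ref{cor:genus1}, and we conclude that $F^1$ and $F^2$ are $D$-algebraic. Combined with the genus $0$ case, this finishes the proof. I do not expect a real obstacle here: the substantive work has already been carried out in Propositions~\ref{cor:genus0} and~\ref{cor:genus1} and, decisively, in Lemma~\ref{cor:genus1b}, where the ``approximate'' decomposition $b = \tau(g) - g + h$ is promoted to an honest certificate. The only point needing any care is the anti-invariance $\iota_1(b) = -b$, since it is precisely what licenses the application of Lemma~\ref{cor:genus1b} --- and this is immediate from $\iota_1(x) = x$.
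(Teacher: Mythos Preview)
Your proposal is correct and matches the paper's intended argument: the theorem is stated immediately after Lemma~\ref{cor:genus1b} as a direct consequence of combining Proposition~\ref{cor:genus0} (genus $0$), Proposition~\ref{cor:genus1} (genus $1$, with the extra $h$ term), and Lemma~\ref{cor:genus1b} (eliminating $h$ via the anti-invariance $\iota_1(b)=-b$). The paper does not spell out the assembly, but your split-on-genus argument is exactly what the phrase ``We therefore can give a uniform statement'' is summarizing.
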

\subsection{The relation between decoupling pairs and certificates}\label{sec:visavis} We now turn to showing that, for quadrant models with infinite group, being decoupled is equivalent to the existence of $g \in \CX(\Etproj)$ such that $x(\ita(y) - y) = \tau(g) - g$.  The following handles both the genus $0$ and genus $1$ cases in a uniform way.
\begin{prop}\label{prop:equiv} Assume that the quadrant model has an infinite group. The following are equivalent
\begin{enumerate}
\item The model is decoupled.
\item The element $b=x(\ita(y) - y)$ has a certificate in  $\CX(\Etproj)$.
\end{enumerate}
In fact, if $(f(x), g(y))$ is a decoupling pair for $xy$ then $g(y)$ is a certificate for $b$ and if $g$ is a certificate for $b$, then $(f= xy-g, g)$ is a decoupling pair for $xy$.
\end{prop}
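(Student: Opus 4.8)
## Proof proposal

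The plan is to prove the two implications of the equivalence directly, essentially by manipulating the defining relation $xy = f(x)+g(y)$ and the certificate equation $b = \tau(g)-g$ against each other, using the fact that $\CX(x)$ and $\CX(y)$ are the fixed fields of $\ita$ and $\itb$ respectively (Remark~\ref{rmk:automorphismonfunctionfield}), together with the identities $\tau = \ita\itb$ on $\CX(\Etproj)$, $\ita(x)=x$, $\itb(y)=y$, and $\ita(xy) = xy$ on $\Etproj$-rational functions. No analytic or elliptic-surface input should be needed here; this is a purely formal computation in the function field, and the infiniteness of the group enters only through the fact (established earlier) that $\tau$ has infinite order, so that there are no nonzero $\tau$-invariant relations getting in the way.

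First I would prove $(1)\Rightarrow(2)$. Suppose $xy = f(x)+g(y)$ with $f\in\CX(x)$, $g\in\CX(y)$. Apply $\ita$: since $\ita$ fixes $\CX(x)$ and fixes $x$, while $\ita(y)$ is the conjugate root, we get $x\,\ita(y) = f(x) + g(\ita(y))$, hence $x(\ita(y)-y) = g(\ita(y)) - g(y)$. Now $g\in\CX(y)$ is $\itb$-invariant, so $\ita(g) = \ita\itb(g)\cdot$(nothing) — more precisely, applying $\ita$ to $g$ and using $\itb(g)=g$ gives $\ita(g) = \ita\itb(g) = \tau^{-1}$ acting appropriately; the cleanest route is: $g(\ita(y)) = \ita(g)$ as elements of $\CX(\Etproj)$, and since $\itb(g)=g$ we have $\tau(g) = \ita\itb(g) = \ita(g)$. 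Therefore $b = x(\ita(y)-y) = \ita(g) - g = \tau(g) - g$, so $g$ is a certificate for $b$. This is exactly the asserted explicit statement.

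Next, $(2)\Rightarrow(1)$. Suppose $b = x(\ita(y)-y) = \tau(g)-g$ for some $g\in\CX(\Etproj)$. Set $f := xy - g$. I must show $f\in\CX(x)$ and $g\in\CX(y)$, i.e.\ that $f$ is $\ita$-invariant and $g$ is $\itb$-invariant. For $g$: we have $\tau(g) = g + b = g + x\ita(y) - xy$. Apply $\ita$ to both sides, using $\ita\tau = \itb$ on the function field and $\ita(x)=x$, $\ita(x\ita(y)) = x y$ (since $\ita$ swaps $y\leftrightarrow\ita(y)$), $\ita(xy) = x\,\ita(y)$: this yields $\itb(g) = \ita(g) + xy - x\ita(y) = \ita(g) - b$. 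On the other hand, applying $\itb$ to $\tau(g) = g+b$ and using $\itb\tau = \tau^{-1}\itb$ (or an equivalent braid identity) lets me compare and deduce $\itb(g) = g$ after checking a constant ambiguity vanishes because $\tau$ has infinite order; equivalently, one shows $\tau(\itb(g) - g) = \itb(g) - g$ and invokes Lemma~\ref{lem:inforder} (an invariant of $\tau$ of the form $\itb(g)-g$ with $\ita$- or $\itb$-symmetry must be constant, and then the residue/symmetry argument as in Lemma~\ref{cor:genus1b} forces it to be zero). Once $\itb(g)=g$, i.e.\ $g\in\CX(y)$, then $f = xy-g$ satisfies $\ita(f) = x\,\ita(y) - \ita(g)$, and since $\tau(g)=\ita(g)=g$ would give $\ita(f) = x\ita(y) - g = xy + b - g = f + (b - x(\ita(y)-y))\cdot(-1)$... — more carefully, $\ita(f) = x\ita(y) - \ita(g)$ and $\ita(g) = \itb(g)$-conjugate; plugging $b = \ita(g)-g$ (the $(1)\Rightarrow(2)$ computation run in reverse) gives $\ita(f) - f = x\ita(y) - x y - (\ita(g)-g) = -b + b = 0$, so $f\in\CX(x)$ and $(f,g)$ is a decoupling pair.

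The main obstacle I anticipate is the constant-ambiguity bookkeeping in $(2)\Rightarrow(1)$: showing that the a priori $\tau$-invariant quantity $\itb(g)-g$ (or $\ita(f)-f$) is not merely constant but actually zero. This is where I expect to reuse the symmetry-plus-residue reasoning of Lemma~\ref{cor:genus1b} (an element that is both $\tau$-invariant and anti-invariant under an involution must vanish) rather than anything new, and where the hypothesis that the group is infinite — hence $\tau$ of infinite order, so $\CX(\Etproj)^\tau = \CX$ by Lemma~\ref{lem:inforder} applied to the appropriate sub-argument — is genuinely used. Everything else is a short symbolic chase through the three relations $\tau=\ita\itb$, $\ita^2=\itb^2=\mathrm{id}$, and the fixed-field descriptions of $\CX(x)$ and $\CX(y)$.
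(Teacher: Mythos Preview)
Your approach is correct and, for $(1)\Rightarrow(2)$, identical to the paper's. For $(2)\Rightarrow(1)$ the paper takes a slightly more symmetric route: it introduces the companion element $b_2 = \tau(y)(\tau(x)-x)$, notes that $b_1+b_2 = \tau(xy)-xy$ so that automatically $b_2 = \tau(f)-f$ with $f=xy-g$, and then runs two \emph{parallel} arguments---using $\ita(b_1)=-b_1$ to show $\tau(\itb(g)-g)=\itb(g)-g$, and using $\ita\itb\ita(b_2)=-b_2$ to show $\tau(\ita(f)-f)=\ita(f)-f$---to conclude $g\in\CX(y)$ and $f\in\CX(x)$ independently. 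Your organization (first establish $g\in\CX(y)$, then deduce $\ita(f)=f$ directly from the resulting identity $b=\ita(g)-g$) is a bit shorter and avoids introducing $b_2$; the paper's version has the mild advantage of making the $x\leftrightarrow y$ symmetry manifest.

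One clarification on the point you flag as the main obstacle: killing the constant $c=\itb(g)-g$ requires neither Lemma~\ref{cor:genus1b} nor any residue argument. Once Lemma~\ref{lem:inforder} gives $\itb(g)-g=c\in\CX$, simply apply $\itb$ to this equation to get $g-\itb(g)=c$, whence $2c=0$ and $c=0$. This is exactly what the paper does, and the same trick handles $\ita(f)-f$ in the paper's parallel argument.
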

\begin{proof} Recall that the fixed field of $\ita$ is  $\CX(x) \subset \CX(\Etproj)$  and the fixed field of $\itb$  is $\CX(y) \subset \CX(\Etproj)$.  

Assume {\it (1),} that the quadrant model  is decoupled.  We then have that 
\begin{align} \label{equiv:1}
xy = f(x) + g(y)
\end{align}
 for some $f(x) \in k(x)$ and $g(x) \in k(y)$. Applying $\ita$ to this equation, we have that  
 \begin{align}\label{equiv:2}
 x\ita(y) = f(x)  + \ita(g(y)). 
\end{align}
Subtracting (\ref{equiv:1}) from (\ref{equiv:2}) we have $x\ita(y) -xy = x(\ita(y) -y) = \ita(g(y)) - g(y)$. Since $\itb(g(y)) = g(y)$, we have
\begin{align}
x(\ita(y) -y) = \tau(g(y)) - g(y)
\end{align}
yielding {\it(2)}.

Now assume {\it(2)}, that there exists  $g \in \CX(\Etproj)$ such that $x(\ita(y) - y) = \tau(g) - g$. We let $b_1:=x(\ita(y) - y) = x(\tau(y) - y)$ and $b_2:= \tau(y)( \tau(x)-x)$. We then have 
\begin{align}
b_1 + b_ 2  &= \tau(y)( \tau(x)-x) + x(\tau(y) - y) + \tau(y)( \tau(x)-x)  = \tau(xy) - xy.
\end{align}
  We therefore have $b_2 = \tau(f) - f$ where $f = xy- g$.  We shall show that $f \in \CX(x)$ and $g\in \CX(y)$, which implies that {\it(1)} holds. 

To see that $f \in \CX(x)$, note that $\ita\itb\ita(b_2) = - b_2$. Combining this with $b_2 = \tau(f) - f$ yields
\begin{align*}
\ita(f) - \ita\itb\ita(f) &= f - \ita\itb(f).
\end{align*}
This implies that $\tau(\ita(f) - f) = \ita(f) - f$. Lemma~\ref{lem:inforder}.2 implies that $\ita(f) - f = c \in \CX$.  Applying $\ita$ to this last equation implies that $f - \ita(f) = c$ so $c=0$.  Therefore $f$ is left fixed by $\ita$ and so must belong to $\CX(x)$.

To see that $g \in \CX(y)$, note that $\ita(b_1) = -b_1$. Combining this with $b_1 = \tau(g) - g$, we have
\begin{align*}
g - \ita\itb(g) &= \itb(g) - \ita(g). 
\end{align*}
This implies that $\tau(\itb(g) - g) = \itb(g) - g$ so, as before $\itb(g) - g = c \in \CX$. Applying $\itb$ to this last equation implies $g - \itb(g) = c$ so $c = 0$.  Therefore $g$ is left fixed by $\itb$ and so must belong to $\CX(y)$. \end{proof}

\section{The orbit residue criterion}\label{sec:rescrit}

In  Section~\ref{sec:certificates} we reviewed and refined results from \cite{DHRS} and \cite{DHRS19}, to conclude  that to determine if a generating series of a quadrant model with infinite group is $x$- and $y$-$D$-algebraic it is enough to determine if the element $b = x(\ita(y) - y)$ has a certificate  $g \in \CX(\Etproj)$.    This condition is equivalent to the cancellation of the orbit residues of the function $b$ (see Proposition~\ref{Prop2}).   The definition of the orbit residues of $b$ involves the computation of  the poles of $b$ and their orbits with respect to $\tau$ as well as  various residues at these points.  Nevertheless, we show below that there are {\it a priori} criteria that allow us to avoid {these calculations}. In Proposition~\ref{prop:fixedbytwoinvolutions} we show that if the poles of $b$ behave in a certain way with respect to the involutions $\ita, \itb$ then the orbit residues are never zero.  In Proposition~\ref{prop:sameorbitQiequalsPj} and \ref{prop:sameorbitQjneqP_i}  we show that for the remaining cases $b$ has a certificate if  and only if two distinguished poles lie in the same $\tau$-orbit. This simplifies the application of Proposition~\ref{Prop2}   and is exploited in our considerations of weighted quadrant {walks} having $D$-algebraic generating series.\\

The potential poles of $b = x(\ita(y) - y)$ are the poles of $x, y,$ and $\ita(y)$ in $\mathbb{P}^1\times\mathbb{P}^1$:
\begin{itemize}
\item $P_i = (\infty, b_i)$ where $\infty = [1:0]$ and $b_i = [b_{i,0}, b_{i,1}]$, $i=0,1$,
\item $Q_i = (a_i, \infty)$ where  $a_i = [a_{i,0}, a_{i,1}]$, $i=0,1$,
\item $\ita(Q_i) = (a_i,c_i)$ where $c_i = [c_{i,0}, c_{i,1}]$, $i=0,1$. 
\end{itemize}
{In the rest of the paper, we make the following convention: the indexes of the points $P_i,Q_k, \ita(Q_l)$ have to be considered modulo $2$. For instance, if $Q_l=Q_1$ the point $Q_{l+1}$ corresponds to $Q_0$. }

\subsection{Symmetries and positions of the poles}
Note that $P_i = \ita(P_j)$ and $Q_i = \itb(Q_j)$ for $i \neq j$. We collect some useful facts concerning these points in the following Lemma. The notation $R\sim S$ for $R,S$ points of $\Etproj$ is used to denote the fact that there exists an $n \in \ZX$ such that $R = \tau^n(S)$.

\begin{lem}\label{lem:useful} \begin{enumerate}
\item $\ita(Q_i) = \tau^{-1}(Q_j)$ for $i\neq j$.
\item If $Q_i \sim P_j$ then $Q_{i+1} \sim P_{j+1}$.
\item If the point $Q_i$ is fixed by $\ita$ then $Q_i = P_j$ for some $j$  or $Q_i = (0,\infty) := ([0:1],[1:0])$.
\item If the point $P_i$ is fixed by $\itb$ then $P_i = Q_j$ for some $j$  or $P_i = (\infty,0) := ([1:0],[0:1])$.
\end{enumerate}
\end{lem}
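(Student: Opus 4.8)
The plan is to obtain (1) and (2) by formal manipulation of the group relations, to deduce (4) from (3) by the symmetry of the whole situation under interchanging the two factors of $\P1\times\P1$ (that is, $x\leftrightarrow y$, $d_{i,j}\leftrightarrow d_{j,i}$, $\ita\leftrightarrow\itb$, which turns $P_i$ into $Q_i$ and $(0,\infty)$ into $(\infty,0)$), and to prove (3) by an explicit local computation with the kernel at the points over $y=\infty$.

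For (1): since $\ita,\itb$ are involutions and $\tau=\itb\circ\ita$ on $\Etproj$, we have $\tau^{-1}=\ita\circ\itb$, whence for $i\neq j$
\[
\tau^{-1}(Q_j)=\ita\bigl(\itb(Q_j)\bigr)=\ita(Q_i),
\]
using $\itb(Q_j)=Q_i$. For (2): from $\tau=\itb\circ\ita$ we get $\itb=\tau\circ\ita$ and $\ita\circ\tau\circ\ita=\tau^{-1}$, so $\ita\circ\tau^{n}=\tau^{-n}\circ\ita$ for all $n\in\Z$; if $Q_i=\tau^{n}(P_j)$ then
\[
Q_{i+1}=\itb(Q_i)=\tau\,\ita\,\tau^{n}(P_j)=\tau^{1-n}\,\ita(P_j)=\tau^{1-n}(P_{j+1}),
\]
hence $Q_{i+1}\sim P_{j+1}$.

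For (3), write the kernel as $K(x,y,t)=A(x)y^{2}+B(x)y+C(x)$ with
\[
A(x)=-t\bigl(d_{1,1}x^{2}+d_{0,1}x+d_{-1,1}\bigr),\qquad B(x)=x-t\bigl(d_{1,0}x^{2}+d_{0,0}x+d_{-1,0}\bigr),\qquad C(x)=-t\bigl(d_{1,-1}x^{2}+d_{0,-1}x+d_{-1,-1}\bigr).
\]
The two points of $\Etproj$ over $y=\infty$ have $x$-coordinates the two zeros in $\P1$ of $d_{1,1}x^{2}+d_{0,1}x+d_{-1,1}$; fixing such a finite zero $a_i$, the fibre of the first projection over $a_i$ is cut out by $B(a_i)y+C(a_i)=0$, so it meets $\Etproj$ only at $Q_i=(a_i,\infty)$ --- that is, $\ita(Q_i)=Q_i$ --- exactly when $B(a_i)=0$. (If $A(a_i)=B(a_i)=C(a_i)=0$ the line $x=a_i$ would be a component of $\Etproj$, and if $d_{1,1}x^{2}+d_{0,1}x+d_{-1,1}\equiv 0$ then $y=\infty$ would be a component; both contradict irreducibility.) Suppose now $a_i$ is finite and nonzero. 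Then $a_i$ is a root of the nonzero polynomial $d_{1,1}x^{2}+d_{0,1}x+d_{-1,1}$, whose coefficients lie in the field $\kappa$ generated by the weights, so $a_i$ is algebraic over $\kappa$; and $B(a_i)=0$ says $a_i=t(d_{1,0}a_i^{2}+d_{0,0}a_i+d_{-1,0})$. If the factor on the right vanishes then $a_i=0$, a contradiction; otherwise $t=a_i/(d_{1,0}a_i^{2}+d_{0,0}a_i+d_{-1,0})$ is algebraic over $\kappa$, contradicting the transcendence of $t$ over $\kappa$ (the standing hypothesis on $t$). Hence $a_i\in\{0,\infty\}$: if $a_i=0$ then $Q_i=(0,\infty)$, and if $a_i=\infty$ then $Q_i=(\infty,\infty)$, which is one of $P_0,P_1$.

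The items (1) and (2) are routine. The work is in (3) (hence (4)): setting up the correct bihomogeneous description of the points at infinity and of $\ita$ in a chart around $Q_i$, and handling the degenerate configurations in which several of $A,B,C$ vanish at $a_i$ or drop in degree. The essential point is the transcendence of $t$ over the field generated by the weights: it is this that forbids a common zero of a weight-polynomial and a $t$-polynomial anywhere but at $0$ or $\infty$, and so pins $Q_i$ to $\{(0,\infty)\}\cup\{P_0,P_1\}$.
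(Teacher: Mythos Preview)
Your proof is correct and follows essentially the same approach as the paper. The only cosmetic differences are that for (3) you work in an affine chart with $K=A(x)y^2+B(x)y+C(x)$ while the paper uses the bihomogeneous form $\overline{K}(a_{i,0},a_{i,1},y_0,y_1)$, and for (2) you apply $\itb$ to $Q_i=\tau^n(P_j)$ whereas the paper applies $\ita$ to $P_1=\tau^n(Q_1)$; in both places the underlying identity $\ita\tau^n=\tau^{-n}\ita$ and the transcendence of $t$ over the field generated by the weights do all the work, exactly as in the paper.
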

\begin{proof}1.~The result follows from the facts that $\tau = \itb\ita$ and $Q_i = \itb(Q_j)$ with $i\neq j$.\\
2.~Note that $\ita\tau^n = \tau^{-n}\ita$. For simplicity, assume $i=j=1$. If $P_1 = \tau^n(Q_1)$, then $P_0 =\ita(P_1) = \tau^{-n}(\ita(Q_1)) = \tau^{-n-1}(Q_0)$ since $\ita(Q_1) = \tau^{-1}(Q_0)$.  \\%
3.~Since $\overline{K}(a_{i,0},a_{i,1}, y_0, y_1) = 0 $ has $y_1 = 0$ as a solution, we see that $\overline{K}(a_{i,0},a_{i,1}, y_0, y_1)$ has no $y_0^2$ term, that is, 
\begin{align*}
\overline{K}(a_{i,0},a_{i,1}, y_0, y_1)= &(a_{i,0}a_{i,1} -t\sum_{\ell=0}^{2} d_{\ell-1,0} a_{i,0}^{\ell}a_{i,1}^{2-\ell})y_0y_1\\ &+ t(\sum_{\ell=0}^2 d_{\ell-1,-1}a_{i,0}^{\ell} a_{i,1}^{2-\ell})y_1^2. \nonumber
\end{align*}

If $Q_i$ is fixed by $\ita$ this expression must have no  $y_0y_1$ term so 
$a_{i,0}a_{i,1} -t\sum d_{\ell-1,0} a_{i,0}^{\ell}a_{i,1}^{2-\ell} = 0$. 
%
Since  $t$ is transcendental over $\QX$, we have 
$a_{i,0}a_{i,1} = 0 = \sum d_{\ell-1,0} a_{i,0}^{\ell}a_{i,1}^{2-\ell}$ 

which implies that either $Q_i = P_j$ or $Q_i =  (0,\infty)$. 

Claim 4.~is entirely symmetric to 3.\end{proof}

We will use the following alternative expression for $b$ (c.f., \cite[Lemma 4.11]{DHRS}):
\begin{align}\label{eq:b2}
b^2&= \frac{x_0^2 \Delta_{[x_0:x_1]}^x}{x_1^2 (\sum_{i=1}^2 x_0^i x_1^{2-i}td_{i-1,1})^2}
\end{align}
where $\Delta_{[x_0:x_1]}^x$ is the discriminant of the polynomial $y \mapsto K(x_0,x_1,y,t)$,

\begin{align}\label{eq:disc}
\Delta^x_{[x_0:x_1]}= & t^2 \Big[ (d_{-1,0} x_1^2 -  \frac{1}{t} x_0x_1 + d_{1,0}x_0^2)^2  \\
&- 4(d_{-1,1} x_1^2 + d_{0,1} x_0x_1 + d_{1,1}x_0^2)(d_{-1,-1} x_1^2 + d_{0,-1} x_0x_1 + d_{1,-1}x_0^2) \Big].\nonumber
\end{align}

{Let us first give a symmetry argument which will allow us to simplify the enumeration of the distinct poles configurations. Let $d_{i,j}$ be a set of weights and let us denote by $K(x,y)$ the associated kernel polynomial and by $\Etproj$ the kernel curve. Let us consider now the polynomial $\widetilde{K}(\tx,\ty)=\tx\ty - t \sum_{i,j}d_{j,i}{\tx}^i{\ty}^j$ and the corresponding projective curve $\tEtproj$. These objects are obtained by exchanging the roles of  $x$ and $y$. Let us denote by 
$\widetilde{\ita},\widetilde{\itb},\widetilde{\tau}$ the horizontal, vertical switches and the automorphism of the walk on $\tEtproj$. Moreover, we denote by $\widetilde{b}$ the element of $\C(\tEtproj)=\C(\tx,\ty)$ defined by 
$\tx (\widetilde{\ita}(\ty)-\ty)$. The following holds.} 

\begin{lemma}\label{lemma:symmetryxandy}
The morphism $\phi :\Etproj \rightarrow \tEtproj, (a,b) \mapsto (b,a)$ is an isomorphism such that 
\begin{itemize}
\item $\widetilde{\itb} \circ \phi= \phi \circ \ita$,
\item $\widetilde{\ita} \circ \phi=\phi \circ \itb$,
\item ${\widetilde{\tau}^{-1} }\circ \phi = \phi \circ \tau$.
\end{itemize}
In particular $\Etproj$ is a curve of genus one and $\tau$ has infinite order if and only if $\tEtproj$ is a curve of genus one and $\widetilde{\tau}$ has infinite order. 
Moreover, $b$ has a certificate $g$ if and only if $\widetilde{b}$ has a certificate $\widetilde{g}$. 
\end{lemma}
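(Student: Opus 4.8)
The plan is to verify the three equivariance identities directly from the defining formulas, and then to read off the two ``in particular'' consequences as formal corollaries of those identities.

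First I would check that $\phi:(a,b)\mapsto(b,a)$ maps $\Etproj$ to $\tEtproj$. This is immediate from the defining equations: $(a,b)\in\Etproj$ means $ab - t\sum_{i,j}d_{i,j}a^ib^j=0$, and $\widetilde K(\tx,\ty)=\tx\ty - t\sum_{i,j}d_{j,i}\tx^i\ty^j$, so $\widetilde K(b,a)= ba - t\sum_{i,j}d_{j,i}b^ia^j = ab - t\sum_{i,j}d_{i,j}a^jb^i = K(a,b)=0$ after reindexing $(i,j)\leftrightarrow(j,i)$ in the sum. (One should say a word about the compactifications $\PX^1\times\PX^1$, but the swap of factors is manifestly a biregular automorphism of $\PX^1\times\PX^1$ carrying $\Etproj$ onto $\tEtproj$ since $\overline K$ and $\overline{\widetilde K}$ are related by the same reindexing.) Hence $\phi$ is an isomorphism of curves, with inverse the swap on $\tEtproj$.

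Next I would verify the three intertwining relations. Recall $\ita$ fixes the first coordinate and sends $P=(a,b)$ to $(a,\tilde b)$ where $\{b,\tilde b\}$ are the roots of $K(a,\cdot)=0$; $\widetilde\itb$ fixes the \emph{second} coordinate of $\tEtproj$. Since the roots of $K(a,\cdot)=0$ are exactly the roots of $\widetilde K(\cdot,a)=0$, we get $\widetilde\itb(\phi(a,b))=\widetilde\itb(b,a)=(\tilde b,a)=\phi(a,\tilde b)=\phi(\ita(a,b))$, i.e. $\widetilde\itb\circ\phi=\phi\circ\ita$; the relation $\widetilde\ita\circ\phi=\phi\circ\itb$ is symmetric. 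For $\tau$: on $\Etproj$ we have $\tau=\itb\circ\ita$ (as maps of the curve), while on $\tEtproj$, $\widetilde\tau=\widetilde\itb\circ\widetilde\ita$. Then
\begin{align*}
\phi\circ\tau &= \phi\circ\itb\circ\ita = \widetilde\ita\circ\phi\circ\ita = \widetilde\ita\circ\widetilde\itb\circ\phi = \widetilde\tau^{-1}\circ\phi,
\end{align*}
using $\widetilde\tau^{-1}=(\widetilde\itb\circ\widetilde\ita)^{-1}=\widetilde\ita^{-1}\circ\widetilde\itb^{-1}=\widetilde\ita\circ\widetilde\itb$ since $\widetilde\ita,\widetilde\itb$ are involutions. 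This gives the third identity.

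Finally I would harvest the consequences. Genus is an isomorphism invariant, so $\Etproj$ has genus one iff $\tEtproj$ does. From $\phi\circ\tau=\widetilde\tau^{-1}\circ\phi$ we get $\phi\circ\tau^n=\widetilde\tau^{-n}\circ\phi$ for all $n$, so $\tau^n=\mathrm{id}$ iff $\widetilde\tau^{-n}=\mathrm{id}$ iff $\widetilde\tau^n=\mathrm{id}$; hence $\tau$ has infinite order iff $\widetilde\tau$ does. For the certificate statement: $\phi$ induces an isomorphism $\phi^*:\C(\tEtproj)\xrightarrow{\sim}\C(\Etproj)$ of function fields with $\phi^*(\tx)=y$ and $\phi^*(\ty)=x$, and the identity $\phi\circ\tau=\widetilde\tau^{-1}\circ\phi$ translates to $\phi^*\circ\widetilde\tau = \tau^{-1}\circ\phi^*$ on function fields (note the extra care: pullback on functions reverses composition, so the index conventions must be tracked, but the upshot is that $\phi^*$ conjugates $\widetilde\tau$ to $\tau^{-1}$, equivalently its inverse conjugates $\tau$ to $\widetilde\tau^{-1}$). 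A short computation from $\widetilde b=\tx(\widetilde\ita(\ty)-\ty)$ shows $\phi^*(\widetilde b)=y(\itb(x)-x)$; comparing with $b=x(\ita(y)-y)=x(\tau(y)-y)$ and using $b_1+b_2=\tau(xy)-xy$ from the proof of Proposition~\ref{prop:equiv} (where $b_1=b$, $b_2=\tau(y)(\tau(x)-x)=y(\itb(x)-x)$ after the index bookkeeping), one obtains that $\phi^*(\widetilde b)$ and $b$ differ by a coboundary for $\tau$ together with the relation $\widetilde b = -\widetilde\tau(\phi^{*-1}\text{-transport of }b)$ up to a $\tau$-coboundary; either way, $b$ is a $\tau$-coboundary in $\C(\Etproj)$ iff $\widetilde b$ is a $\widetilde\tau$-coboundary in $\C(\tEtproj)$, since a function-field isomorphism conjugating $\tau$ (up to inverse) and sending $b$ to something differing from $\pm\widetilde b$ by a coboundary preserves the property of being a coboundary. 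I expect the only mildly delicate point to be precisely this last index/sign bookkeeping — relating $\phi^*(\widetilde b)$ to $b$ modulo coboundaries and keeping straight that $\phi^*$ conjugates $\widetilde\tau$ to $\tau^{-1}$ (and that a coboundary for $\tau$ is a coboundary for $\tau^{-1}$, since $\tau(g)-g = -\tau^{-1}(\tau(g))+\tau^{-1}(\tau(g))\cdots$, concretely $f=\tau(g)-g$ iff $f = -(\tau^{-1}(h)-h)$ with $h=\tau(g)$ reindexed); everything else is a routine substitution in the defining equations.
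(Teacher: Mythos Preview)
Your approach is essentially the paper's, and your treatment of the isomorphism and the three intertwining identities is actually more explicit than the paper's (which just calls them obvious). The genus and infinite-order consequences are fine.

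The certificate part, however, has a concrete bookkeeping slip. You write $b_2=\tau(y)(\tau(x)-x)=y(\itb(x)-x)$ ``after the index bookkeeping,'' but these are not equal. On the function field $\tau=\ita\itb$, so $\tau(y)=\ita(y)$ and $\tau(x)=\ita\itb(x)$, giving $b_2=\ita(y)(\ita\itb(x)-x)$; whereas $\phi^*(\widetilde b)=y(\itb(x)-x)$. In fact $b_2=\ita\bigl(\phi^*(\widetilde b)\bigr)$, not $\phi^*(\widetilde b)$ itself. Since applying $\ita$ does not in general preserve the property of being a $\tau$-coboundary, this is a real gap, not just a sign convention.

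The paper closes it exactly here: from $\widetilde b=\widetilde\tau(\widetilde g)-\widetilde g$ one first gets (via $\phi^*$ and the conjugation $\widetilde\tau\leftrightarrow\tau^{-1}$) that $y(\itb(x)-x)=\tau(g)-g$ for some $g$. Then one applies $\ita$ and uses $\ita\tau=\itb$ on the function field to compute
\[
\ita(y)(\tau(x)-x)=\ita\tau(g)-\ita(g)=\itb(g)-\ita(g)=\tau(h)-h,\qquad h:=-\itb(g),
\]
so $c:=\ita(y)(\tau(x)-x)=b_2$ does have a certificate. Only then does $b+c=\tau(xy)-xy$ finish the argument. Insert this $\ita$-step and your proof is complete and matches the paper's.
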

\begin{proof}
The first part of the Lemma is obvious since the inverse of $\phi$ is given by ${\phi^{-1}}( (c,d))= (d,c)$. The equivalence is entirely symmetric so that one just has to prove one direction. Let us assume that 
$\widetilde{b}$ has a certificate $\widetilde{g}$, that is, \begin{equation}\label{eq:widetildebcertificates}
\widetilde{b}= \widetilde{\tau}(\widetilde{g})-\widetilde{g}.
\end{equation} The isomorphism $\phi$ induces an isomorphism   $\psi: \C(\tEtproj) \rightarrow \C(\Etproj), f \mapsto f \circ \phi$ such that $\iota_1 \circ \psi=\psi \circ\widetilde{\itb}$, $\itb \circ \psi =\psi \circ \widetilde{\ita}$ and 
${ \tau^{-1}} \circ \psi =\psi \circ \widetilde{\tau}$.  Applying $\psi$ to \eqref{eq:widetildebcertificates} and noting that 
$\psi(\tx)= y$ and $\psi(\ty)=x$ yields  
\begin{align*}
\psi({\widetilde{b}}) =  \psi(\tx)(\psi\widetilde{\ita}(\ty) - \psi(\ty) &   = \psi\widetilde{\tau}(\widetilde{g})-\widetilde{g} \\  
= y (\itb(x)-x)& ={ \tau^{-1}} (\psi(\widetilde{g})) -\psi(\widetilde{g}).
\end{align*}
Setting ${g=-\tau^{-1}(\psi(\widetilde{g}))}$, one finds $y(\iota_2(x)-x)= \tau(g)-g$. We apply $\iota_1$ to the latter equation and, noting that $\tau=\iota_1 \iota_2$ by Remark \ref{rmk:automorphismonfunctionfield} find
$$\iota_1(y)(\tau(x)-x)=\iota_1 \tau (g) -\iota_1(g)=\iota_2(g) -\iota_1(g)= \tau(h)-h,$$ 
where $h=-\itb(g)$. Thus the function $c=\iota_1(y)(\tau(x)-x)$ has a certificate. Since  $b+c=\tau(xy)-xy$, we conclude that 
$b$ has also a certificate. This ends the proof.
\end{proof}

\subsection{The involutions}

In this section, we study the behavior of the orbit residues {of $b$ } when its poles are fixed by involutions. Our  proof proceeds by considering the various configurations and orders of the poles. To do this one  determines the order of vanishing of the numerators and denominators of the expression on the right hand side of \eqref{eq:b2}. Useful facts for carrying out this task are:
\begin{itemize}
\item As noted in the proof of Lemma~\ref{lem:useful}(2),  at the points $Q_i = (a_i, \infty)$ where $y_0 = 0$, we have that $\sum_{i=1}^2 x_0^i x_1^{2-i}td_{i-1,1}$ vanishes. If $Q_0 = Q_1$, we have that his latter expression has a double zero.
\item  If we have a point $R$ where $\ita(R) = R$, then $\Delta_{[x_0:x_1]}^x = 0$ at this point.  In particular this happens when $P_1 = P_0$ or $Q_i = \ita(Q_i)$.  Furthermore, at this point one has ramification and the order of $x = [x_0:x_1]$ is $2$.
\end{itemize}
In what follows we will state the polar divisor $(b)_\infty$ and residue configurations and rely on the reader to do the simple verification using the facts.
\begin{prop}\label{prop:fixedbytwoinvolutions}
Assume that  $\Etproj$ is a curve of genus one and that  the automorphism of the walk is  not of finite order. 
If one of the $P_i$'s and one of the $Q_j$'s is fixed by an involution then  the function $b$ has no certificate. 
\end{prop}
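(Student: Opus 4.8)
The plan is to argue by contradiction: suppose $b$ has a certificate $g \in \CX(\Etproj)$, so that $b = \tau(g) - g$, and then derive a contradiction from the assumed symmetry of the poles. The key tool is the notion of orbit residue from Appendix~\ref{appendix:PR}: if $b = \tau(g) - g$ then the orbit residue of $b$ along every $\tau$-orbit must vanish (this is the content of Proposition~\ref{Prop2}). So it suffices to exhibit a $\tau$-orbit on which the orbit residue of $b$ is nonzero. First I would fix notation: by hypothesis some $P_i$ is fixed by $\itb$ and some $Q_j$ is fixed by $\ita$. By Lemma~\ref{lem:useful}.3 and .4, the fixed point $Q_j$ is either one of the $P_k$'s or the point $(0,\infty)$, and the fixed point $P_i$ is either one of the $Q_k$'s or $(\infty,0)$. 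Using Lemma~\ref{lemma:symmetryxandy}, which swaps the roles of $x$ and $y$ and preserves the property ``$b$ has a certificate'', I can assume without loss of generality that the fixed point among the $Q_j$'s is the generic-looking one, i.e. reduce to one of the two cases $Q_j = (0,\infty)$ or $Q_j = P_k$ for some $k$.

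Next I would compute the polar divisor $(b)_\infty$ and the residues of $b$ at its poles using the explicit formula \eqref{eq:b2} together with \eqref{eq:disc}, following exactly the bookkeeping recipe stated just before the proposition (namely: $\sum_{i=1}^2 x_0^i x_1^{2-i} t d_{i-1,1}$ vanishes at each $Q_i$, with a double zero if $Q_0 = Q_1$; and $\Delta^x_{[x_0:x_1]}$ vanishes to the appropriate order at any point fixed by $\ita$, where moreover $x$ ramifies). The point of the symmetry hypotheses is that when $P_i$ is $\itb$-fixed (so $P_i = Q_k$ or $P_i = (\infty,0)$) the potential pole structure degenerates in a controlled way, and likewise when $Q_j$ is $\ita$-fixed. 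In each surviving configuration one gets a short list of candidate poles of $b$, each of order $1$, and the residues come in pairs $\pm\alpha$ forced by the relation $\ita(b) = -b$ (noted before Lemma~\ref{cor:genus1b}) together with Lemma~\ref{lem:horesidues}. The heart of the matter is then to observe that the $\tau$-orbit structure on these poles, constrained by Lemma~\ref{lem:useful}.1 (which says $\ita(Q_i) = \tau^{-1}(Q_j)$) and Lemma~\ref{lem:useful}.2, does not allow the residues along any single orbit to cancel: there is an orbit containing a single simple pole with nonzero residue, or two simple poles whose residues add to $2\alpha \neq 0$ rather than to $0$. This contradicts vanishing of orbit residues and hence $b$ has no certificate.

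Concretely the two reduced cases go as follows. If $Q_j = (0,\infty)$, then $(0,\infty)$ is an honest pole of $b$ coming from the factor $x_0/x_1$ in \eqref{eq:b2}, and its $\tau$-orbit has to be isolated from the poles $P_0, P_1, \ita(Q_0), \ita(Q_1)$ by an argument using that $\tau$ acts by translation by the infinite-order point $P$ and that the finitely many other poles cannot line up with the whole orbit of $(0,\infty)$ unless a forbidden fixed-point coincidence occurs — so the orbit residue there is $\pm\alpha \neq 0$. If instead $Q_j = P_k$, then the pole at $P_k = Q_j$ is a ramification point of $x$ (it is $\ita$-fixed), the residue computation via \eqref{eq:b2} shows the order and residue, and combined with the hypothesis that some $P_i$ is $\itb$-fixed one finds that the relevant orbit again carries an uncancelled residue. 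The main obstacle — and where the real work lies — is the case analysis: one must enumerate which of $\{P_0,P_1\}$ and $\{Q_0,Q_1\}$ can coincide or be fixed, rule out the impossible overlaps using Lemma~\ref{lem:useful}, and in each case carry out the local computation of orders and residues from \eqref{eq:b2}–\eqref{eq:disc}. Once the configuration is pinned down, the non-cancellation of residues along the distinguished orbit, and hence the absence of a certificate, is immediate from Proposition~\ref{Prop2}. I expect the delicate sub-case is when $Q_0 = Q_1$ (double zero of the denominator) simultaneously with one of the symmetry hypotheses, since there the pole at the $Q$'s could a priori have order $2$ and one must check it does not, so that the orbit-residue argument still applies.
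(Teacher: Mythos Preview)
Your overall strategy---orbit residues via Proposition~\ref{Prop2}, then a case analysis on the polar configuration of $b$---is exactly the paper's. But the execution has real gaps.

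\textbf{Incomplete case reduction.} The hypothesis is that some $P_i$ is fixed by \emph{an} involution and some $Q_j$ is fixed by \emph{an} involution; it does not specify which. This gives four cases according to whether the involution fixing $P_i$ is $\ita$ or $\itb$, and likewise for $Q_j$. You immediately assume ``$P_i$ fixed by $\itb$ and $Q_j$ fixed by $\ita$'', which is only one of the four (the paper's Case~d). The $x\leftrightarrow y$ symmetry of Lemma~\ref{lemma:symmetryxandy} sends $(P,\ita)$-data to $(Q,\itb)$-data, so it pairs the case $(\ita,\ita)$ with $(\itb,\itb)$ but leaves each of $(\ita,\itb)$ and $(\itb,\ita)$ invariant: you cannot reduce to a single case. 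In particular you omit entirely the situations $P_0=P_1$ (i.e.\ $P_i$ $\ita$-fixed) together with $Q_j$ $\ita$-fixed, and $P_0=P_1$ together with $Q_0=Q_1$; both occur and require their own residue computations.

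\textbf{Wrong local computations.} Two concrete errors: (i) When $Q_j=(0,\infty)$, the numerator factor $x_0^2$ in \eqref{eq:b2} cancels the vanishing of the denominator, so $(0,\infty)$ is \emph{not} a pole of $b$; the relevant poles are $Q_{j+1}=\tau(Q_j)$ and $\ita(Q_{j+1})=\tau^{-1}(Q_j)$. (ii) The poles are not all simple. When $Q_j=P_k$ (so $P_0=P_1=Q_j$) the pole at that point has order $3$; when $P_0=P_1$ and $Q_0=Q_1$ with $Q_0\neq(0,\infty)$, the poles at $Q_0$ and $\tau^{-1}(Q_0)$ have order $2$. Your blanket ``each of order $1$'' is false, and the higher-order residues must be tracked (via Lemma~\ref{lem:horesidues}) to see which orbit sums can or cannot vanish.

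\textbf{Missing non-alignment step.} Even after the residue tables are correct, the core of the argument in each case is to show that the unique pattern making all orbit residues vanish (typically ``$P_i\sim Q_j$'') is impossible. This is not a soft ``infinite order'' remark: one applies $\ita$ (or $\itb$) to an assumed relation $P_i=\tau^n(Q_j)$, uses the fixed-point hypotheses and Lemma~\ref{lem:useful}.1 to obtain $\tau^{2n}(Q_j)=Q_j$ or $\tau^{2n+1}(Q_j)=Q_j$, and only then invokes infinite order to force $n=0$ or a contradiction in $\ZX$. Your sketch gestures at this but does not supply it; without it the proof is incomplete.
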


\begin{proof}
By Proposition \ref{Prop2}, the function $b$ has no certificate if and only if one of its orbit residues is non-zero. We shall frequently use the fact that since  $\tau$  has infinite order, if  $\tau^n(Q)=Q$ for some point $Q$ then $n = 0$.  This follows from the fact that $\tau(Q) = Q\oplus P$ where $P$ has infinite order in the groups structure  on $\Etproj$ (see the remarks following Lemma~\ref{lem:inforder}).\\

We now use a case-by-case argument to prove this proposition.\\

\noindent \textit{Case a: $P_j$ is fixed by $\ita$ and $Q_i$ is fixed by $\ita$.} \\[0.1in]
By Lemma \ref{lem:useful}, we find that either $Q_i=P_0=P_1$ or $Q_i=(0,\infty)$. Moreover, $Q_i \neq Q_{i+1}$ since otherwise $\tau(Q_i)=Q_i$ and $\tau$ would be the identity. 
 \begin{itemize}  
 \item \textit{ Case a.1: $Q_i=P_0=P_1$.}  Then, the polar divisor $(b)_\infty$ of $b$ is $3P_1 +\epsilon Q_{i+1} + \epsilon \tau^{-1}(Q_i)$ where $\epsilon$ is zero if $Q_{i+1}=(0,\infty)$ and { otherwise $\epsilon = 1$}. It is easily seen that the orbit  residue of order $3$ of $P_1$ is never zero. 
 \item  \textit{ Case a.2: $P_0=P_1$ and $Q_i=(0,\infty) \neq Q_{i+1}$.} In that situation, $Q_{i+1}= \tau(Q_i)$ and $\iota_1(Q_{i+1})=\tau^{-1}(Q_i)$, Lemma \ref{lem:horesidues} allows one  to show that  the residues of $b$ are as follows
 $$
\begin{array}{|c|c|c|c|}
\hline
\mbox{Points} &  P_0 &\tau(Q_i)& \tau^{-1}(Q_i)    \\ \hline
\mbox{Residues of order 1} &  \a & \b &\b  \\ \hline
\end{array}
$$
with $\a +2\b=0$ and $\a,\b \neq 0$. Then, the orbit residues of $b$ are all zero if and only if $P_0 \sim Q_i$. This last condition will never happen. Suppose to the contrary that $P_0=\tau^n(Q_i)$ then $\ita(P_0)=P_0 =\tau^{-n}(\ita(Q_i))=\tau^{-n}(Q_i)$. Thus $\tau^{2n}(Q_i)=Q_i$ which implies $n=0$. This is absurd since $Q_i=(0, \infty)$ and $P_0=(\infty, [b_{0,0}:b_{0,1}])$.\\
 \end{itemize}
\textit{Case b: $P_j$ is fixed by $\itb$ and $Q_i$ is fixed by $\itb$.} \\
This case is symmetric with Case a by exchanging $x$ and $y$. Lemma \ref{lemma:symmetryxandy} allows to conclude that $b$ has no certificate in that case either.  \\[0.1in]
\textit{Case c: $Q_j$ fixed by $\iota_2$ and $P_i$ fixed by $\ita$}\\
In that case, note that $P_0=P_1$ and $Q_0=Q_1$. Moreover, since $\tau$ is not the identity, one has $Q_0 \neq P_0$.
 Lemma \ref{lem:horesidues} allows to show that $(b)_\infty=P_0 +\epsilon Q_0 + \epsilon \tau^{-1}(Q_0)$ with $\epsilon=1 $ if $Q_0=(0, \infty)$ and $\epsilon=2$ if $Q_0 \neq (0,\infty)$. Thus,  the residues of $b$ are as follows
 $$
\begin{array}{|c|c|c|c|}
\hline
\mbox{Points} &  P_0 &Q_0& \tau^{-1}(Q_0)    \\ \hline
\mbox{Residues of order 1} &  \a & \b &\b  \\ \hline
\mbox{Residues of order 2} & 0 & \c& - \c \\ \hline
\end{array}
$$
with $\a +2\b=0$ and $\a \neq 0$, $\b \neq 0$ if  $Q_0 = (0, \infty)$ and  $\c \neq 0$ if and only if $Q_0 \neq (0,\infty)$. Thus the orbit residues are zero if and only if $P_0 \sim Q_0$. The latter condition is never true. Indeed, if $P_0=\tau^n(Q_0)$ then $$\ita(P_0)=P_0=\tau^{-n}(\ita(Q_0))=\tau^{-n}(\ita( \itb(Q_0))=\tau^{-n-1}(Q_0)=\tau^n(Q_0).$$ Since $\tau$ is of infinite order, we must have $n=-n-1$ which is absurd since $n \in \Z$.\\[0.1in]
\textit{Case d: $Q_j$ fixed by $\ita$ and $P_i$ fixed by $\itb$.}\\
Using Lemma \ref{lem:useful}, we see that if $Q_j$ is fixed by $\ita$ then $Q_j=(0,\infty)$ or $(\infty,\infty)$. Moreover, if  $P_i$ is  fixed by $\itb$ then $P_i=(\infty,0)$ or $(\infty,\infty)$. Some of these possibilities will never occur:
\begin{itemize}
\item if $P_i=(\infty,\infty)$ is fixed by $\iota_2$ then $P_i=Q_0=Q_1$. Thus, none of the $Q_j$'s can be fixed by $\ita$. Otherwise, $P_i$ would be fixed by $\tau$.
\item if $P_i=(\infty,0)$ is fixed by $\itb$ then $P_{i+1} \notin \{Q_j, Q_{j+1} \}$. Indeed if $P_{i+1}=Q_j$ then $P_{i+1}=P_i=Q_j$ because $Q_j$ is fixed by $\ita$. This is absurd since $P_i=(\infty,0)$ and $Q_j=(a,\infty)$. If $P_{i+1}=Q_{j+1}$ then $\tau^3(Q_j)=Q_j$ which is absurd since $\tau$ has infinite order. 
\end{itemize}
Thus the only possibility is $Q_j=(0,\infty)$ fixed by $\ita$, $Q_{j +1} \notin \{P_i,P_{i+1} \}$ and $P_i=(\infty,0)$ fixed by $\itb$. The polar divisor of $(b)_\infty$ is $P_0+P_1+ \tau(Q_j) +\tau^{-1}(Q_j)$ and using Lemma \ref{lem:horesidues}, one gets 
$$
\begin{array}{|c|c|c|c|c|}
\hline
\mbox{Points} &  P_0 &P_1 &\tau(Q_j)& \tau^{-1}(Q_j)    \\ \hline
\mbox{Residues of order 1}& \a &  \a & \b &\b  \\ \hline

\end{array}
$$
where $2\a+2\b=0$ and $\a,\b \neq 0$.
Noting that $P_0 \sim Q_j$ if and only if $P_{1} \sim Q_{j}$, one sees that $b$ has orbit residues zero (in one or two orbits) if and only if $P_i \sim Q_j$. The latter condition is never true. Indeed, if $Q_j =\tau^n(P_i)$ then $\ita(Q_j)=Q_j=\tau^{-n}(\ita(P_i))=\tau^{-n-1}(P_i)=\tau^n(P_i)$. Since $\tau$ is not of finite order, we must have $n=-n-1$. Absurd since $n \in \Z$.
\end{proof}

\subsection{Remaining cases}
In this section, we shall consider the cases where  one of  the $P_i$'s and one  the $Q_j$'s are not simultaneously fixed by an involution. We shall prove that $b$ has orbit residues zero if and only if two precise points of the polar divisor  are in the same orbit.

 We distinguish two cases: $d_{1,1}= 0$ and $d_{1,1}\neq 0$. They corresponds to the fact that the point $(\infty,\infty)$ belongs to the curve or not.

\begin{prop}\label{prop:sameorbitQiequalsPj}
Assume that $d_{1,1}=0$, $\Etproj$ is a genus one curve and $\tau$ is of infinite order. Assume moreover that one of the $P_i$'s and one of the $Q_j$'s are not simultaneously fixed by an involution.  Then,
$b$ has a certificate if and only if $P_0 \sim P_1$.
\end{prop}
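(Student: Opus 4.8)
The plan is to analyze the polar divisor and residue configuration of $b$ when $d_{1,1}=0$ and no $P_i$ is simultaneously fixed by an involution with some $Q_j$, then apply Proposition~\ref{Prop2}. First I would record the consequences of the hypothesis $d_{1,1}=0$: the point $(\infty,\infty)$ does not lie on $\Etproj$, so the points $Q_i=(a_i,\infty)$ all have $a_i$ finite and, by \eqref{eq:b2}-\eqref{eq:disc}, the denominator $\sum_{i=1}^2 x_0^ix_1^{2-i}td_{i-1,1}$ becomes $td_{0,1}x_0x_1$ (no $x_0^2$ term), so its only zeros occur where $x_0=0$, i.e.\ at $(0,\infty)$, unless $d_{0,1}=0$ as well. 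This pins down where $b$ can have poles: among $P_0,P_1$ (from the factor $x_0^2/x_1^2$ blowing up at $x=\infty$) and the $Q_i,\ita(Q_i)$, and I would verify using the facts listed before Proposition~\ref{prop:fixedbytwoinvolutions} (vanishing of the denominator at $Q_i$, vanishing of $\Delta^x$ at ramification points, and Lemma~\ref{lem:useful}) that after removing the excluded "doubly-fixed" configuration, the polar divisor of $b$ is supported on the $\tau$-orbits of $P_0$ (equivalently $P_1$, since $P_1=\ita(P_0)$ and by Lemma~\ref{lem:useful}.2 orbit membership is symmetric) together with possibly $(0,\infty)$ and its $\tau$-translates $\tau^{\pm1}(0,\infty)$.

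Next I would compute the residues. Using Lemma~\ref{lem:horesidues} (and Lemma~\ref{resthm}) exactly as in the proof of Proposition~\ref{prop:fixedbytwoinvolutions}, I expect the residue configuration to fall into a small number of tables: a simple-pole residue $\alpha\neq 0$ at $P_0$ and $P_1$ (with a compensating residue at translates of $(0,\infty)$ when that point is a pole of $b$), and possibly a double-pole contribution at $(0,\infty)$ and $\tau^{-1}(0,\infty)$ with opposite residues that automatically cancel within their orbit. The key point is that because $\ita(b)=-b$ and $\ita$ swaps $P_0\leftrightarrow P_1$ while $\ita(0,\infty)=(0,\infty)$, the orbit residues organize into: one orbit residue attached to the orbit of $P_0$ and the orbit of $P_1$, plus orbit residues attached to the orbit of $(0,\infty)$ which vanish identically by the opposite-sign structure. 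Hence the only possibly-nonzero orbit residues are those along the orbits of $P_0$ and $P_1$, and they vanish precisely when the residue $\alpha$ at $P_0$ cancels against the residue at $P_1$ — which, by the definition of orbit residue, happens exactly when $P_0$ and $P_1$ lie in the same $\tau$-orbit, i.e.\ $P_0\sim P_1$. When $P_0\sim P_1$, the residues $\alpha$ at $P_0$ and $\alpha$ at $P_1$ (note $\ita(b)=-b$ gives residue $\alpha$ at $P_1=\ita(P_0)$ with the right sign relative to the orbit) combine to total $0$ in that single orbit, and the remaining orbit residues are zero as noted; conversely if $P_0\not\sim P_1$ the orbit residue at the orbit of $P_0$ equals a nonzero sum. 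Then Proposition~\ref{Prop2} gives the equivalence with the existence of a certificate.

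The main obstacle I anticipate is the bookkeeping of the residues at $(0,\infty)$ and its $\tau$-translates: one must show these never obstruct, i.e.\ that whatever residues appear there always cancel within their own $\tau$-orbit regardless of whether $P_0\sim P_1$. This requires checking that $(0,\infty)\not\sim P_i$ (an argument of the type already used repeatedly in Proposition~\ref{prop:fixedbytwoinvolutions}: if $\tau^n(0,\infty)=P_i$ then applying $\ita$ and using $\ita(0,\infty)=(0,\infty)$, $\ita(P_i)=P_{i+1}$ forces a contradiction with $\tau$ having infinite order, unless $P_0=P_1$, which is the excluded case) so that the orbit of $(0,\infty)$ is genuinely disjoint from the orbits of the $P_i$, and then verifying the internal cancellation there. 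The subtlety is handling the sub-cases $d_{0,1}=0$ or $d_{0,-1}=0$, $Q_0=Q_1$ versus $Q_0\neq Q_1$, and whether $Q_i=\ita(Q_i)$; in each I would identify the precise polar divisor from \eqref{eq:b2} and apply the same residue computation, reducing everything to the single clean criterion $P_0\sim P_1$.
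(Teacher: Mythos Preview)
Your proposal rests on a geometric error that derails the whole analysis. You assert that when $d_{1,1}=0$ the point $(\infty,\infty)$ does \emph{not} lie on $\Etproj$ and hence the $Q_i$ all have finite $x$-coordinate. The opposite is true: evaluating $\overline{K}$ at $x_1=y_1=0$ gives $-t d_{1,1}x_0^2y_0^2$, which vanishes precisely when $d_{1,1}=0$. Equivalently, the coefficient of $y_0^2$ in $\overline{K}$ is (up to $-t$) $d_{-1,1}x_1^2+d_{0,1}x_0x_1+d_{1,1}x_0^2$; when $d_{1,1}=0$ this factors as $x_1(d_{-1,1}x_1+d_{0,1}x_0)$, so one of its roots is $x_1=0$. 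Thus one of the $Q_k$ equals $(\infty,\infty)$ and coincides with one of the $P_j$. (You also dropped the root $x_1=0$ when you said the only zero of $t d_{0,1}x_0x_1$ is at $x_0=0$.) This coincidence $P_j=Q_k$ is exactly what drives the case $d_{1,1}=0$ and produces the correct polar divisor, which has \emph{double} poles at $P_j$ and at $P_{j+1}$ (since both $x$ and $\ita(y)$, resp.\ $x$ and $y$, blow up there), together with simple poles at $Q_{k+1}=\tau(P_{j+1})$ and $\ita(Q_{k+1})=\tau^{-1}(P_j)$; the order-$2$ residues at $P_j,P_{j+1}$ are $\gamma,-\gamma$ with $\gamma\neq 0$, and it is the cancellation of \emph{these} that forces $P_0\sim P_1$.

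Your intended argument, with $(0,\infty)$ carrying the obstruction and the $P_i$'s having only simple poles in separate orbits from the $Q$'s, is the picture for $d_{1,1}\neq 0$, not $d_{1,1}=0$. As a secondary issue, your claim that $\ita(0,\infty)=(0,\infty)$ is not automatic: Lemma~\ref{lem:useful}.3 gives only the implication that if $Q_i$ is $\ita$-fixed then $Q_i\in\{P_j,(0,\infty)\}$, not the converse. So the separation-of-orbits step you sketch would also need repair even in the setting you had in mind. Once you correct the starting observation to $P_j=Q_k=(\infty,\infty)$ and recompute $(b)_\infty$ and the residue table (distinguishing the sub-case $Q_{k+1}=(0,\infty)$), the argument goes through essentially as in the paper.
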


\begin{proof}
{Note that  $P_j = Q_k = (\infty,\infty)$ for some $j,k$}. {Moreover since we assume that one of the $P_i$'s and one the $Q_j$'s are not simultaneously fixed by an involution, we have $P_0 \neq P_1$ and $Q_0 \neq Q_1$.}  We shall prove the statement case by case according to the  configuration of poles of $b$.  \\[0.1in]
\textit{Case a. $P_j=Q_k$ and nothing else:}
Then the polar divisor $(b)_\infty$ of $b$ equals $2P_j +2P_{j+1} + { \tau(P_{j+1})} +  \tau^{-1}(P_j)$. Lemma \ref{lem:horesidues} shows that  the residues of 
$b$ are as follows
$$
\begin{array}{|c|c|c|c|c|}
\hline
\mbox{Points} &  P_j &P_{j+1} & {\tau(P_{j+1})} & \tau^{-1}(P_j)   \\ \hline
\mbox{Residues of order 1}& \a &  \a & \b &\b  \\ \hline
\mbox{Residues of order 2}& \c &  -\c & 0 &0  \\ \hline
\end{array}
$$
with $2a+2b=0$, $b,c \neq 0$. Then the orbit residues are zero if and only if $P_j \sim P_{j+1}$. \\ 

\noindent\textit{Case b. $P_j=Q_k$ and $Q_{k+1}=(0, \infty)$}

\begin{itemize}
\item  \textit{Case b.1: and nothing else}
Then the polar divisor $(b)_\infty$ of $b$ equals $2P_j +2P_{j+1}$. Lemma \ref{lem:horesidues} shows that  the residues of 
$b$ are as follows
$$
\begin{array}{|c|c|c|}
\hline
\mbox{Points} &  P_j &P_{j+1}   \\ \hline
\mbox{Residues of order 1}& \a &  \a  \\ \hline
\mbox{Residues of order 2}& \c &  -\c  \\ \hline
\end{array}
$$
with $2\a=0$, $\c \neq 0$. Then the orbit residues are zero if and only if $P_j \sim P_{j+1}$.

  \item \textit{Case b.2: and $Q_{k+1}$ is fixed by $\iota_1$} Note that $P_0 \neq P_1$ by the above. Moreover, since $Q_{k+1}$ is fixed by $\iota_1$, we get that $P_j=\tau^2(P_{j+1})$ so that $P_0 \sim P_1$. The divisor is the same than in Case b.1. and and since $P_0$ is in the same orbit than $P_1$, the orbit residues are always zero.  
   
\end{itemize}

{There are no other cases since the remaining configurations will correspond to the situations where one the  the $P_i$'s and one the $Q_j$'s are  simultaneously fixed by an involution.}\end{proof}
\begin{rmk}
In the proof of Proposition \ref{prop:sameorbitQiequalsPj}, we prove that if $P_j=Q_k$ and $Q_{k+1}=(0,\infty)$ is fixed by $\iota_1$ the function $b$ always has  a certificate. This corresponds to walks where the directions North East, North West and West do not belong to the steps set. The models of such walks are as follows
\begin{figure}[h!]
$$
\underset{\walk{IIB.1}}{\begin{tikzpicture}[scale=.4, baseline=(current bounding box.center)]
\foreach \x in {-1,0,1} \foreach \y in {-1,0,1} \fill(\x,\y) circle[radius=2pt];
\draw[thick,->](0,0)--(0,1);
\draw[thick,->](0,0)--(1,0);
\draw[thick,->](0,0)--(-1,-1);
\draw[thick,->](0,0)--(0,-1);
\end{tikzpicture}}
\quad 
\underset{\walk{IIB.2}}{\begin{tikzpicture}[scale=.4, baseline=(current bounding box.center)]
\foreach \x in {-1,0,1} \foreach \y in {-1,0,1} \fill(\x,\y) circle[radius=2pt];
\draw[thick,->](0,0)--(0,1);
\draw[thick,->](0,0)--(1,0);
\draw[thick,->](0,0)--(-1,-1);
\draw[thick,->](0,0)--(1,-1);
\end{tikzpicture}
}
\quad
\underset{\walk{IIB.6}}{\begin{tikzpicture}[scale=.4, baseline=(current bounding box.center)]
\foreach \x in {-1,0,1} \foreach \y in {-1,0,1} \fill(\x,\y) circle[radius=2pt];
\draw[thick,->](0,0)--(0,1);
\draw[thick,->](0,0)--(1,0);
\draw[thick,->](0,0)--(-1,-1);
\draw[thick,->](0,0)--(0,-1);
\draw[thick,->](0,0)--(1,-1);
\end{tikzpicture}}
$$
\end{figure}

That is we prove that among the $9$  models of walks that were differentially algebraic when unweighted, the three models above remain differentially algebraic with weights. \end{rmk}

\begin{prop}\label{prop:sameorbitQjneqP_i} 
Assume that $d_{1,1}\neq 0$, $\Etproj$ is a genus one curve and $\tau$ is of infinite order. Assume moreover that the $P_i$'s and the $Q_j$'s are not simultaneously fixed by an involution.  Then,
$b$ has a certificate if and only if $P_j \sim Q_k$.

\end{prop}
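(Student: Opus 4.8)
The plan is to carry out the same kind of case analysis as in the proof of Proposition~\ref{prop:sameorbitQiequalsPj}: read off the polar divisor and the residue table of $b$ from \eqref{eq:b2}--\eqref{eq:disc}, and then invoke Proposition~\ref{Prop2}, according to which $b$ has a certificate if and only if all of its orbit residues vanish. First I would normalize the hypotheses. Since the $P_i$'s and the $Q_j$'s are not simultaneously fixed by an involution, Lemma~\ref{lemma:symmetryxandy} (which swaps the roles of $x$ and $y$ and preserves the genus, the infinite order of $\tau$, the relation $\sim$, and the existence of a certificate) lets me assume that \emph{no} $Q_j$ is fixed by an involution. By Lemma~\ref{lem:useful}.3 and because $d_{1,1}\neq 0$ forces $(\infty,\infty)\notin\Etproj$, this gives $Q_0\neq Q_1$, $Q_i\neq(0,\infty)$, and $P_i\neq Q_j$ for all $i,j$. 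Using \eqref{eq:b2} and Lemma~\ref{lem:useful}.1 (so that $\iota_1(Q_i)=\tau^{-1}(Q_{i+1})$), the polar divisor of $b$ is supported on the six points $P_0,P_1,Q_0,Q_1,\tau^{-1}(Q_0),\tau^{-1}(Q_1)$, with simple poles at $Q_0,Q_1$ and their $\tau^{-1}$-translates.

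Next I would exploit $\iota_1(b)=-b$. Combined with the transformation of residues under the involution $\iota_1$ (Lemma~\ref{lem:horesidues}), it gives $\mathrm{Res}_{\iota_1(R)}(b)=\mathrm{Res}_R(b)$, hence $\alpha:=\mathrm{Res}_{P_0}(b)=\mathrm{Res}_{P_1}(b)$ and $\beta_i:=\mathrm{Res}_{Q_i}(b)=\mathrm{Res}_{\tau^{-1}(Q_{i+1})}(b)$, and then the residue theorem forces $\alpha+\beta_0+\beta_1=0$. I would also check that $\alpha\neq 0$: when $P_0\neq P_1$ the function $\iota_1(y)-y$ takes the nonzero value $y(P_1)-y(P_0)$ at $P_0$ while $x$ has a simple pole there, so $\alpha\neq 0$; when $P_0=P_1$ the point $P_0$ cannot also be fixed by $\iota_2$ (otherwise $\tau$ would fix $P_0$, contradicting its infinite order), so $y$ is unramified at $P_0$ and \eqref{eq:b2}--\eqref{eq:disc} show $b$ still has a simple pole there with $\alpha\neq 0$.

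The conclusion then follows. If $P_j\not\sim Q_k$ for all $j,k$, then $P_0$ is the only pole of $b$ in its $\tau$-orbit, so that orbit residue equals $\alpha\neq 0$, and $b$ has no certificate by Proposition~\ref{Prop2}. If $P_j\sim Q_k$ for some $j,k$, then $P_{j+1}\sim Q_{k+1}$ by Lemma~\ref{lem:useful}.2, so every pole of $b$ lies in $\mathcal{O}(Q_0)\cup\mathcal{O}(Q_1)$; the orbit residue of each of $\mathcal{O}(Q_0)$ and $\mathcal{O}(Q_1)$ equals $\alpha+\beta_0+\beta_1=0$, so all orbit residues vanish and $b$ has a certificate. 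The remaining configurations ($P_0=P_1$; some $P_i$ equal to $\tau^{-1}(Q_k)$, which produces a double pole there; $Q_0\sim Q_1$; and their combinations) are treated by the same method: one writes $(b)_\infty$ and the residue table from \eqref{eq:b2}--\eqref{eq:disc}, controls orbit residues at double poles by Lemma~\ref{resthm}, and uses the same identity $\alpha+\beta_0+\beta_1=0$ from the residue theorem; the parasitic $\sim$-relations between involution-fixed points (which would make $\tau$ of finite order) are ruled out through $\iota_1\tau^n=\tau^{-n}\iota_1$ exactly as in the proof of Proposition~\ref{prop:fixedbytwoinvolutions}.

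The step I expect to be the main obstacle is the case bookkeeping: one must enumerate all pole configurations compatible with ``no $Q_j$ fixed, $P_i\neq Q_j$'' and, in each, determine from the orders of vanishing of the numerator and denominator of \eqref{eq:b2} how the pole orders change when $x$ ramifies or when a pole of $x$ collides with a pole of $\iota_1(y)$, so that the residue table (and the attendant orbit residues, possibly of order $2$) is correct. Once those tables are pinned down, both implications are immediate from Proposition~\ref{Prop2} and the relation $\alpha+\beta_0+\beta_1=0$.
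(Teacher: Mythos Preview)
Your approach is essentially the paper's: the same case-by-case reading of the polar divisor of $b$ from \eqref{eq:b2}, the same use of $\iota_1(b)=-b$ via Lemma~\ref{lem:horesidues}, and the same appeal to Proposition~\ref{Prop2}. Your upfront use of Lemma~\ref{lemma:symmetryxandy} to normalize to ``no $Q_j$ fixed by an involution'' is a clean touch; the paper instead keeps both sides and only invokes the symmetry to dispatch one case ($P_0=P_1$, its Case~c).

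There is, however, one genuine slip in your normalization. From ``no $Q_j$ is fixed by an involution'' you deduce $Q_i\neq(0,\infty)$, citing Lemma~\ref{lem:useful}.3. That lemma only gives the forward implication: if $Q_i$ is fixed by $\iota_1$ then $Q_i\in\{P_j,(0,\infty)\}$. The converse fails: $Q_i=(0,\infty)$ occurs whenever $d_{-1,1}=0$, but this point is $\iota_1$-fixed only if in addition $d_{-1,0}=0$. So the sub-case ``$Q_i=(0,\infty)$ and $Q_i$ not fixed by $\iota_1$'' survives your reduction; it is exactly the paper's Case~a.2 (equivalently~d.1), where $[0:1]$ is a double zero of both numerator and denominator of \eqref{eq:b2} and the poles at $Q_i$ and $\iota_1(Q_i)$ vanish, leaving $(b)_\infty=P_0+P_1+Q_{i+1}+\tau^{-1}(Q_i)$. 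This case is not on your list of ``remaining configurations'' and should be. Your own method handles it immediately once it is identified.

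A smaller point: in the configuration $P_0=P_1$ the residue theorem gives $\alpha+2\beta_0+2\beta_1=0$ (only one copy of $\alpha$), not $\alpha+\beta_0+\beta_1=0$; the orbit-residue argument still goes through, but the displayed identity is not literally ``the same'' across cases as your last paragraph suggests.
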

\begin{proof}
Since $d_{1,1}\neq 0$, the sets $\{P_0,P_1 \}$ and $\{Q_0,Q_1\}$ have empty intersection. \\

\textit{ Case a.: Assume the  six points $P_i, Q_i, \ita(Q_i), \ i=0,1$ are all distinct}\\

\begin{itemize}\item \textit{ Case a.1: and $Q_i \neq (0,\infty)$:} Then,  $(b)_\infty = P_0 + P_1 +Q_0 + \tau^{-1}(Q_1) + Q_1 + \tau^{-1}(Q_0).$\\[0.05in]
  Since $\ita(b) = -b$, Lemma~\ref{lem:horesidues} implies that  the residues are given by
  $$
\begin{array}{|c|c|c|c|c|c|c|}
\hline
\mbox{Points} &  P_0 &P_1& Q_0 & \tau^{-1}(Q_1) & Q_1 & \tau^{-1}(Q_0) \\ \hline
\mbox{Residues of order 1} &  \a & \a &\b&\b &\c &\c  \\ \hline
\end{array}
$$
with $2 \a +2 \b +2 \c=0$ and $\a,\b,\c \neq 0$.
 Assume that all  the orbit residue are  zero. Since $\a\neq 0$ the set $\{P_0,P_1\}$ cannot form a single $\tau$-orbit. Therefore $P_i \sim Q_j$ for some $i,j$. Conversely assume that $P_i \sim Q_j$.  Then,  by Lemma \ref{lem:useful}2.), we have  $P_{i+1} \sim Q_{j+1}$ .   We then have that either there are two $\tau$-orbits $\{P_{i +\epsilon}, Q_{j+ \epsilon}, \tau^{-1}(Q_{j+ \epsilon})\}, \epsilon =0,1$, each of whose orbit residues are $\a+\b+\c=0$ or there is one $\tau$-orbit $\{P_0,P_1, Q_0 , \tau^{-1}(Q_1) , Q_1 , \tau^{-1}(Q_0)\}$ whose orbit residue is $2\a+2\b+2\c=0$.  Thus the orbit residues are all zero. 

\item \textit{Case a.2: $Q_i = (0,\infty)$:} For simplicity assume $Q_1 = (0,\infty)$. In this case,$[0:1]$ is a double zero of both the numerator and denominator of \eqref{eq:b2} so $Q_1$ and $\ita(Q_1)$ are not poles. Therefore 
$(b)_\infty = P_0 + P_1 + Q_0 + \tau^{-1}(Q_1).$
  Since $\ita(b) = -b$, Lemma~\ref{lem:horesidues}.2 implies that  the residues are given by
  $$
\begin{array}{|c|c|c|c|c|}
\hline
\mbox{Points} &  P_0 &P_1 & Q_0 & \tau^{-1}(Q_1) \\ \hline
\mbox{Residues of order 1} &  \a &\a & \b & \b    \\ \hline
\end{array}
$$

 One easily modifies the argument above to prove the Proposition in this case.\\  
\end{itemize}

We now examine all of the cases when at least two of the putative poles coincide. Notice that we always have that $\tau^{-1}(Q_i)\neq Q_i$  since $\tau$ has infinite order (see the remarks following Lemma~\ref{lem:inforder}).\\

\textit{Case b: $Q_0 = Q_1$}\\

\begin{itemize}
\item \textit{Case b.1: and $P_0, P_1, Q_1, \ita(Q_1)$ distinct; $Q_1 \neq (0,\infty)$:}  $(b)_\infty = P_0 + P_1 + 2Q_1 + 2\tau^{-1} (Q_0).$
Lemma~\ref{lem:horesidues} implies that the configuration of residues is
$$
\begin{array}{|c|c|c|c|c|}
\hline
\mbox{Points} &  P_0 &P_1& 2Q_1 & 2\tau^{-1}(Q_0)   \\ \hline
\mbox{Residues of order 1} &  \alpha & \alpha &\beta&\beta   \\ \hline
\mbox{Residues of order 2} &     &    & \gamma & -\gamma \\ \hline  
\end{array}
$$
with $2 \alpha +2 \beta  =0$ and $\alpha, \gamma \neq 0$. 
If the orbit sums are zero then $\{P_0, P_1\}$ cannot be an orbit so for some $i,j$ some $P_i \sim Q_j$ ($P_i \neq Q_j$ by assumption). If $P_i\sim Q_j$ then, since $Q_0=Q_1$, Lemma~\ref{lem:useful} implies that all the poles must lie in the same orbit. Lemma~\ref{lem:horesidues} implies that all orbit sums are zero.

\item \textit{Case b.2: and $P_0, P_1, Q_1, \ita(Q_1)$ distinct; $Q_1 = (0,\infty)$:}  $(b)_\infty = P_0 + P_1 + Q_1 + \tau^{-1} (Q_1).$
$$
\begin{array}{|c|c|c|c|c|}
\hline
\mbox{Points} &  P_0 &P_1& Q_1 & \tau^{-1}(Q_1)   \\ \hline
\mbox{Residues of order 1} &  \alpha & \alpha &\beta&\beta  \\ \hline
\end{array}
$$
The argument is similar to Case a.1).\\

\end{itemize}

\textit{Case c :$P_0=P_1$} This case is obtained by symmetry exchanging $x$ and $y$ from Case b. Lemma \ref{lemma:symmetryxandy} allows to conclude. Note that the condition $P_i \sim Q_j$ becomes $Q_i \sim P_j$. That is, this condition remains unchanged by symmetry.  \\

\textit{Case d : $Q_i=(0, \infty)$} \\
\begin{itemize}
\item \textit{Case d.1: and nothing else:} This is a.2.
\item \textit{Case  d.2: and $Q_i$ fixed by $\iota_1$} The divisor is the same than in a.2. 
\item \textit{Case d.3: and $Q_{i+1}$ is fixed by $\ita$} This case can not occur. Indeed, Lemma \ref{lem:useful} implies that $Q_{i+1}=P_l$ or $Q_{i+1}=(0,\infty)$. The first case contradicts the assumption $d_{1,1} \neq 0$ whereas the second implies $\tau(Q_i)=Q_i$ which is in contradiction with the fact that the curve has genus $1$.
\item \textit{Case d.4: and $P_0$ is fixed by $\ita$}: Assume $Q_1 = (0,\infty)$. In this case $d_{-1,1} = 0$ so $b$ does not have a pole at $Q_1$.  $(b)_\infty = P_0 + Q_0+ \ita(Q_0)$. 
$$
\begin{array}{|c|c|c|c|c|}
\hline
\mbox{Points} &  P_0 & Q_0 &\ita(Q_0)   \\ \hline
\mbox{Residues of order 1} &  \alpha &\beta&\beta   \\ \hline
\end{array}
$$ 
with $\alpha\beta \neq 0$. Lemma~\ref{lem:horesidues} implies that $\alpha+2\beta = 0$. If the orbit residues are zero, then we must have all the poles in the same orbit, so $P_0 \sim Q_0$. If $P_0 \sim Q_0$, then $P_0 = P_1 \sim Q_1 \sim \tau^{-1}(Q_1) = \ita(Q_0)$, so all the poles are in the same orbit. If $P_0 \sim Q_1$, then $P_0 \sim \tau^{-1}(Q_1) = \ita(Q_0)$ so all the poles are in the same orbit and the orbit sum is zero.
\end{itemize}
{ Note that many cases disappear because we avoid having a $Q_i$ and a $P_j$ fixed simultaneously by two involution and also avoid one of the $Q_i$ equaling one of the $P_j$.}
\end{proof}

\section{Determining weights for which the generating series are  $D$-algebraic.}\label{sec:Dalg}

 In Section~\ref{sec:rescrit}, we show that either $b$ has no certificate or that the existence of a certificate is equivalent to two special points being in the same $\tau$-orbit. In this section we will describe an algorithm and its refinements to decide the question of two such points being in the same orbit. 
 
 The algorithm and its refinement are based on well known tools developed in arithmetic algebraic geometry to study  { elliptic surfaces, that is, families of elliptic curves}. In particular the Neron-Tate height $\hat{h}$ on elliptic curves $E$ over function fields $k$\footnote{ for instance $k =\C(t)$}, is the crucial ingredient.  This is a function $\hhat: E(k) \rightarrow {\R}$ one of whose properties is that if $P,Q \in E(k)$ and $Q = nP, n\in \ZX$ (which means that $Q$ is the $n$-multiple of $P$ with respect to the group law defined on $E(k)$) then $\hhat (Q) = n^2 \hhat(P)$.  In Section~\ref{sec:algo}, we describe how  the question of determining if points $P$ and $Q$ lie in the same $\tau$-orbit can be reduced to deciding if some point is a multiple of another point.

  {For fixed values of the weights, {\mfss  the Sage Package {\tt comb\_walks} (see \cite{JPBCL}) allows one to calculate $nP$ for fixed integers $n$ and points $P\in E$ as well as the necessary ancillary objects. In addition an implemented algorithm in MAGMA computes exactly the height of a point $P$ and so, for fixed weights, one can calculate if $P$ and $Q$ lie in the same orbit.}  However, our goal is to characterize the $D$-algebraicity of a weighted model in terms of a set of polynomial equations on the weights. Therefore, we need to  unravel the height computation. {The height of a point $P$ is given by a formula \eqref{NT} involving  certain numerical data associated with $E$, $P$ and $Q$. In Section~\ref{sec:algo} we show how one can determine $\hhat (Q), \hhat(P)$ up to a finite number of possibilities by  estimating these numerical data using  the celebrated Tate algorithm (calculating the  Weierstrass equation equation for $\Etproj$ and deducing certain properties from  tables produced by Tate)   as well as estimating the other numerical data by further consulting   tables produced by Kodaira, N\'eron, Oguiso and Shioda.  From the possible values of $\hhat (Q), \hhat(P)$, we can determine a finite set of possible $n$ with $\hhat (Q) = n^2 \hhat(P)$. A computation then allows one to determine which values of $n$ (if any) imply $Q = \tau^n(P)$ for some integer $n$. We emphasize that thanks to the deep work of those authors, once the Weierstrass equation is determined, only simple arithmetic is required to carry out this algorithm.}}
  
 {The key object lying behind these calculations is an elliptic surface associated with $E$.
 In Section~\ref{sec:refine},  we}  {construct this elliptic surface by blowing up the base points of the pencil of elliptic curves attached to $\Etproj$ and use it to refine the algorithm of Section~\ref{sec:algo}. This point of view emphasizes the importance of the relative position of the base points in the study of the $D$-algebraicity of the weighted model and also allows one 
 one to reduce drastically the number of possible values of $n$ as well as other information related to the mapping $\tau$.}
 
 
\subsection{An algorithm}\label{sec:algo}  So far, we have considered the kernel of the walk as defining, for a fixed  $t\in \CX$, transcendental over $\QX$ a curve  $\Etproj \subset \PX^1(\CX) \times \PX^1(\CX)$. The algorithm described in this and the next section depends on another object associated with the kernel. We now consider $t$ as a variable and consider $\Etproj$ as an elliptic curve defined over the field $\CX(t)$. The group law of this elliptic curve is defined over $\CX(t)$ and  and we consider the maps $\ita,\itb,\tau$ as automorphisms of $\Etproj$. We will make use of the {\it Kodaira-N\'eron {model}} $\Scal$ associated to $\Etproj$ {(see \cite[Def. 5.18 and 5.2 and Proposition 5.4]{SchuttShiodaBook} for the most recent reference on the subject but also \cite{DuistQRT,OgSh91, Shioda90,Silverman94} as general references)}. {In Section~\ref{sec:refine} we will give a description of the construction of  $\Scal$ as well as a more precise explanation of its properties  but for this algorithm we will only need the following properties:}

 \begin{enumerate}
 \item $\Scal$ is a smooth projective rational surface defined over $\CX$ with a surjective morphism $\pi:\Scal \rightarrow \PX^1(\CX)$ ;
 \item {Almost all fibers are isomorphic to $\Etproj$}, that is, they  are nonsingular elliptic curves.
 
 \item The remaining  finite number of fibers are called singular fibers and  are singular (reduced) curves. The fiber over $0$ is singular. 
 \item {There exists a section $\sigma_0: \PX^1(\CX) \rightarrow \Scal \, (\pi\circ \sigma_0= {\rm id}_\Scal)$ } and
there is a bijection between $\CX(t)$-points {$P$} of $\Etproj$ and sections { $\sigma_P:\PX^1 \rightarrow \Scal \ (\pi\circ \sigma_P= {\rm id}_\Scal)$ so that $\sigma_0$ corresponds to the origin of the elliptic curve $\Etproj$.}
\end{enumerate}

Let us denote by {$\Pcal$ the image in $\Scal$,  of the section $\sigma_P$ corresponding to a  $\CX(t)$-point $P$ of $\Etproj$.  $\Pcal$ is a curve in the surface $\Scal$. Abusing terminology, we shall call $\mathcal{P}$ the section associated to $P$}. The N\'eron-Tate height of a point $P$ is defined in terms of a numerical invariant of $\Scal$, how the section $\Pcal$ intersects $\Ocal$, the section corresponding to the origin $O$ of $\Etproj$, and how $\Pcal$ intersects some of the singular fibers. The (at first intimidating) formula defining the N\'eron-Tate height is
\begin{align}\label{NT}
\hhat(P) = 2\chi(\Scal) + 2(\Pcal.\Ocal) - \sum_{v\in R}{\rm contr}_v(P)
\end{align}
The term $\chi(\Scal)$ is the arithmetic genus of $\Scal$.. { By \ref{lem:rationalellipticsurface}, the surface $\Scal$ is rational  so that its arithmetic genus  is $1$ (\cite[Proposition 7.1]{SchuttShiodaBook})}. The term $(\Pcal.\Ocal)$ is the intersection number of  $\Pcal$ and  $\Ocal$, where $\Ocal$ is the section corresponding to the origin of $\Etproj$.  In our applications, these sections are disjoint so, for us, $(\Pcal.\Ocal)=0$. For the remaining sum, $R$ is the finite set of  {singular} fibers $v$ and ${\rm contr}_v(P)$ is a rational number determined by how $\Pcal$ intersects the components of $v$. Much is known about $R$ and the numbers ${\rm contr}_v(P)$.

Kodaira~\cite{Kodaira1,Kodaira2} and N\'eron~\cite{Neron} classified the types of fibers which can occur { in such a fibration} (see also\cite[Ch.IV,\S9, Table 4.1]{Silverman94}). Based on the configuration of the intersections of the components of such a fiber $v$, one associates a root lattice $T_v$ of type $A,D,$ or $E$. Up to a finite number of possibilities, ${\rm contr}_v(\Pcal)$ is determined by the root lattice of the fiber $T_v$. This information is summarized in Table~\ref{table:contr} (see~{\cite[Table 6.1]{SchuttShiodaBook}}, \cite[(8.16)]{Shioda90},\cite[Lemma 7.5.3]{DuistQRT}).

\begin{table}[h!]
\centering
\begin{tabular} { c c c c c c c}
\hline\\
Kodaira Fiber Type &$III$ & $III^*$ &$IV$ &$IV^*$ &  $I_n(n>1) $ & $I_n^*$\\[0.1in]
\hline\\
Root Lattice $T_v$ of Fiber & $A_1$& $E_7$ & $A_2$& $E_6$ & $A_{n-1}$ & $D_{n+4}$ \\[0.1in]
\hline\\
Possible ${\rm contr}_v(P)$& $1/2$& $3/2$ & $2/3$& $ 4/3$  & \begin{minipage}{1in} $i(n-i)/n$ \\ $0\leq i \leq n-1$\end{minipage} & 
 $\begin{cases} 1, & i=1\\ 1+n/4 ,& {i>1} \end{cases}$\\[0.2in]
 \hline\\
\end{tabular}

\caption[Caption for LOF]{This table gives the range of possibilities for ${\rm contr}_v(P)$. In Section~\ref{sec:refine} we show how $i$ can be determined exactly based on the explicit construction of $\Scal$ and the specific $\Pcal$ but for now we are only concerned with knowing the finite set of possibilities\footnotemark. }\label{table:contr}
\end{table}

\footnotetext{Kodaira's classification of fiber types included an additional fiber referred to as type $II^*$. It is not included in this table since in this situation any point $P \in \Etproj(\C(t))$ has finite order and the group of the walk is finite (see \cite[Table 8.2 ]{SchuttShiodaBook}).}

 The direct sum $T = \oplus_{v\in R}T_v$ is defined to be the {\it root lattice} associated with the the {singular} fibers. In \cite{OgSh91}, Oguiso and Shiota give a finite list of the possible root lattices which can occur (there are 74).  This implies that if one can determine $T_v$ for at least one fiber, then seeing which root lattices contain $T_v$ allows one to determine the term $\sum_{v\in R}{\rm contr}_v(P)$ in \eqref{NT} up to a finite set of possibilities. 
 
 {\begin{rmk}\label{rmk:finiteheight}
 By \cite[Theorem 6.20]{SchuttShiodaBook}, a point $P \in \Etproj(\C(t))$ has height zero if and only $P$ is a torsion point. 
 Choosing some point $O$ to be  the origin of $\Etproj$, one remarks that $\tau^n(O)=O$ if and only if $n \tau(O)=O$ if and only if $\tau(O)$ is a torsion point. Therefore the group of the walk is finite if and only   $\hhat(\tau(O))=0$. In that situation, the order of the group    is $2n$ where $n$ is the order of torsion of $\tau(O)$.  If one knows the root lattice of the singular fibers, \cite[Table 8.2]{SchuttShiodaBook} gives the torsion subgroup and thereby an upper-bound for  the order of the group of the walk.  By \cite[Cor. 8.21]{SchuttShiodaBook}, the order of the torsion is bounded by $6$ and therefore the order of the group is bounded by $12$. Note that we are considering  the group of the walk acting on a generic fiber. If one considers its action on an arbitrary fiber, its order might be bigger than $12$ but less than $24$ by Mazur Theorem ({\mfs assuming that the fiber is defined over $\QX$}; see \cite{JiangTavakoliZhao}). 
\end{rmk}}

  An algorithm due to Tate~\cite{Tate}  allows us to determine {the type of any fiber.  We shall use it only to determine the type of the fiber  above $0$. The Tate algorithm relies on the Weierstrass model of $\Etproj$ (see \cite[Sections 5.7 and 5.8]{SchuttShiodaBook} and  also \cite[Ch. IV, \S9]{Silverman94}, \cite[Lemma 6.3.1]{DuistQRT}).} This leads to the following algorithm. \\[0.1in]
 
  \noindent\underline{\it Algorithm.} {\mfs As noted in Remark~\ref{rmk:finiteheight}, if the $\tau$ has finite order, then its order is bounded by $6$. Calculating $\tau^n, 1\leq n \leq 6$ will give polynomial conditions on the $d_{i,j}$ equivalent to $\tau$ being of finite order, c.f.~\cite{KauersYatchak} (in  Section~\ref{sec:refine} we will see that a more careful examination of $\mathcal{S}$ and its Mordell-Weil Lattice  will yield such equations directly).  We can therefore assume that $\tau$ is of infinite order and that we are given a kernel $K$ whose associated curve satisfies the conditions of Proposition~\ref{prop:sameorbitQiequalsPj} or Proposition~\ref{prop:sameorbitQjneqP_i}. } These propositions say that $b$ has a certificate if and only if two distinct $\CX$-points (which we will denote by $N$ and $M$) of the curve are in the same $\tau$-orbit. {By Lemma \ref{lem:intersectionsection}, the curves 
  $\mathcal M$ and $\tau(\mathcal{N})$ do not intersect $\mathcal N$ in $\Scal$}.  
We will show how to decide if $\tau^n(N) = M$ for some $n \in \ZX$.  We have freedom to select the point of $\Etproj$ that will be the origin $O$ of the associated group and so we will let $O = N$. Recall that { $\tau(P) = P\oplus \tau(N)$ for any point $P$}, so we have that if $\tau^n(N) = M$, then   $M = n\tau(N)$. In particular, $\hhat(M) = n^2 \hhat(N)$. We will first find a finite set $H$ of rational numbers, depending on $K$, such that if $Q$ is any point of $\Etproj$ such that the corresponding {curve} $\Qcal$ does not intersect $\Ocal$, then $\hh(Q) \in  H$.   Since this hypothesis holds for $M$ and {$\tau(N)$}, we can compare all pairs of values $r_1,r_2$ in $H$ and determine all integers $n$ such that $n^2 = r_1/r_2$. For these integers,  a computation will check if $\tau^n(N) = M$. \\

\noindent \ul{\it Step 1: Find the Kodaira type of the fiber above  $0$ and its associated root lattice $T_0$.} This can be done using the algorithm of Tate mentioned above.  Tate's algorithm determines, in all characteristics,  the Kodaira type of a singular fiber (assumed to be above $0$) of {an elliptic surface whose generic fiber is given by a Weierstrass equation $y^2=4x^3-g_2x-g_3$ with $g_2,g_3 \in \C(t)$}. In characteristic $0$,  the algorithm shows that the  type is determined  by the order of vanishing of the discriminant $\Delta$ and  the invariants $g_2$ and $g_3$ at $0$. Formulas to express $\Delta,g_2,g_3$  in terms of the coefficients of $K$ are given in~\cite[Section 2.3.5, Proposition 2.4.3, Corollary 2.5.10]{DuistQRT}. Restricting to the fiber types in Table~\ref{table:contr},   Table \ref{table:Tate}  gives the type of the fiber in terms of the order of vanishing of $\Delta, g_2,$ and $g_3$ (See also \cite[Table 1]{SchuttShioda}, \cite[Lemma 6.3.1]{DuistQRT}). Note that the table does not deal with the cases when the order of $g_2 \geq 4$ and the order of $g_2\geq 6$.  When this is the case a successive  changes of variables of the for $x\mapsto t^2x, y\mapsto t^3 y$ will ensure that this condition is met since with this transformation, the order of $\Delta$ drops by $12$ and this can happen only a finite number of times. 
\begin{table}[h!]\
\centering
\begin{tabular} { c | c c c}
Type & $g_2$ & $g_3$ & $\Delta$\\
\hline
$I_n, n\geq 1$ & $0$ & $0$ & $n$\\
$I_0^*$ & $\geq 2$ & $ \geq 3$& $6$ \\
$I_n^*, n\geq 1$& $2$& $3$ & $n+6$\\
$III$ & $1$ & $\geq 2$ & $3$\\
$III^*$ & $3$ & $\geq 5$& $9$\\
$IV$ & $\geq 2$ & $2$ & $4$\\
$IV^*$ & $\geq 3$ & $4$ & $8$\\
\end{tabular}
\caption{Local contributions of the singular fibers }\label{table:Tate}
\end{table}
One now uses Table~\ref{table:contr}  to find the associated root lattice~$T_0$.
 \begin{exa}\label{exa:algo} Consider the weighted model:
 $$
{\begin{tikzpicture}[scale=.4, baseline=(current bounding box.center)]
\foreach \x in {-1,0,1} \foreach \y in {-1,0,1} \fill(\x,\y) circle[radius=2pt];
\draw[thick,->](0,0)--(0,1);
\draw[thick,->](0,0)--(1,1);
\draw[thick,->](0,0)--(-1,0);
\draw[thick,->](0,0)--(-1,-1);
\draw[thick,->](0,0)--(0,-1);
\end{tikzpicture}}
$$
with nonzero weights $d_{1,1}, d_{0.-1}, d_{-1,-1}, d_{-1,0}, d_{0,1},d_{0,0}$. {When unweighted, this model was called  $w_{IIC.2}$ and we shall keep this notation for the weighted model.} The associated kernel is

\begin{align*}&K(x_0,x_1,y_0,y_1,t_0,t_1) = x_{{0}}x_{{1}}y_{{0}}y_{{1}}- \\
&t \left( d_{{-1,-1}}{x_{{1}}}^{2}{y_{{1}}
}^{2}+d_{{-1,0}}{x_{{1}}}^{2}y_{{0}}y_{{1}}+d_{{0,-1}}x_{{0}}x_{{1}}{y
_{{1}}}^{2}+d_{{0,0}}x_{{0}}x_{{1}}y_{{0}}y_{{1}}+d_{{0,1}}x_{{0}}x_{{
1}}{y_{{0}}}^{2}+d_{{1,1}}{x_{{0}}}^{2}{y_{{0}}}^{2} \right).\\
\end{align*}

The polar divisor of $b$  is $(b)_\infty = P_1 + Q_0 + \ita(Q_0)$, where
\begin{itemize}
\item $P_1 = P_0  = ([1:0],[0:1])$
\item $Q_0 = ([-d_{0,1}:d_{1,1}], [1:0])$
\item $\ita(Q_0) = ([-d_{0,1}:d_{1,1}], [t(d_{-1,-1}d_{1,1}- d_{0,-1} d_{0,1}) : -(d_{0,1} +t(d_{-1.0}d_{1,1} - d_{0,0}d_{0,1})]$
\end{itemize}
Furthermore, $Q_1 = ([0:1]:[1:0])$.  This means we are in Case d.4 of Proposition~\ref{prop:sameorbitQjneqP_i} and we must decide if $P_1$ and $Q_0$ are in the same $\tau$-orbit.\\[0.1in]
A {\sc Maple} calculation   shows that the orders of $g_2$ and $g_3$ are $0$ and the order of $\Delta$ is $7$ (see \cite{HSurl}).// 

Therefore Table 5.2 impies that the associated fiber is $I_7$ and Table 5.1 implies that the root lattice $T_0$ is $A_6$.
\end{exa}

\noindent\underline{\it Step 2: Determine $T$.} Once one has found the reducible fiber $v$ above $0$, use Table~\ref{table:contr}   to determine its associated root lattice $T_0$. Consult the table of all possible root lattices in \cite[Table 8.2]{SchuttShiodaBook} {or the table in \cite{OgSh91}} to find all possible $T$ of which $T_0$ is a summand.\\[0.1in]
\noindent {\it Example~\ref{exa:algo}(bis):}  Since  $T_0 = A_6$,  the possibilities for $T$ {listed in these tables} are $A_6$ and $A_6\oplus A_1$.  This implies that there are one or two singular fibers.\\[0.1in]
\noindent \underline{\it Step 3: Determine possible ${\rm contr}_v(Q)$ and possible $\hhat(Q)$.}   For each of the possible $T$ found in Step 2 and each of the summands $T_v$, determine the set of possible {values of ${\rm contr}_v(Q)$} from Table~\ref{table:contr} and then determine the possible values of $\hhat(Q)$. Our assumption on $P$ and $\Scal$ imply that \eqref{NT} simplifies to
\begin{align}\label{NTbis}
\hhat(Q) = 2 - \sum_{v\in R}{\rm contr}_v(Q)
\end{align}

\noindent {\it Example~\ref{exa:algo}(bis):}   If $T = A_6$, then there is only one reducible fiber  $v_0$ and Table~\ref{table:contr} implies that ${\rm contr}_{v_0}(Q) \in \{ 0, 6/7, 10/7, 12/7\}$ and $\hhat(Q) = 2 - {\rm contr}_{v_0}(Q) \in \{2,8/7,4/7,2/7\}$. If $T = A_6 \oplus A_1$ then there are two fibers: $v_0$ as before and $v_1$.  We have ${\rm contr}_{v_1}(Q) \in \{0,1/2\}$. Therefore $\hhat(Q) = 2 - {\rm contr}_{v_0}(Q)  -{\rm contr}_{v_1}(Q) \in \{2,8/7,4/7,2/7,3/2, 9/14/,1/14\} = H$.\\[0.1in]
\noindent \ul{\it Step 4: Determine possible values of $n$ such that  $\hhat(M) = n^2\hhat(N)$ and test if $M = \tau^n(N)$ for these values.} This involves determining if $r_1/r_2$ is a square for $r_1, r_2 \in H$ and then using the definitions of $\ita$ and $\itb$ to calculate $\tau^n(N)$ and compare this with $M$.  For weighted models this will yield polynomial conditions that are necessary and sufficient for $\tau^n(N) = M$.\\[0.1in]
\noindent {\it Example~\ref{exa:algo}(bis):} One finds that the possible values of $n$ are $-4,-3,-2,-1,0,1,2,3,4$.  The entries in the coordinates of $\tau^n(P_1)$ and $Q_0$ are polynomials in $t$ and the weights.    In all cases, except $n = -1,$ we show  via a {\sc Maple} calculation (see \cite{HSurl}) that $Q_0 \neq \tau^{n}(P_1)$  For $n = -1$, we have 
\begin{align*}
\tau^{-1}(P_1) &= \ita(\itb(([1:0],[[0:1]) ) = \ita(([-d_{-1,-1}:d_{0,-1}],[0:1]) )\\
  &= ([-d_{-1,-1}:d_{0,-1}],[y_0:y_1])\\
 &{\rm where}\\
 y_0 &= d_{{0,-1}} \left( td_{{-1,-1}}d_{{0,0}}-td_{{-1,0}}d_{{0,-1}}-d_{{-1,-1}} \right) {\rm and}\\
 y_1 &= td_{{-1,-1}} \left( d_{{-1,-1}}d_{{1,1}}-d_{{0,-1}}d_{{0,1}} \right).
\end{align*}
Since $Q_0  = ([-d_{0,1}:d_{1,1}],[1:0])$ we have that  $Q_0= \tau^{-1} (P_1)$ if and only if \[d_{{-1,-1}}d_{{1,1}}-d_{{0,-1}} d_{0,1}= 0.\]
This implies that this weighted model has an $x$- and $y$-$D$-algebraic generating series if and only if this latter condition holds.
{Note that this condition is automatically satisfied if the model is unweighted so that the unweighted model $w_{IIC.2}$ was
$x$-and $y$-$D$-algebraic.}

\subsection{Refinements}\label{sec:refine} In this section we shall give a more detailed description of the {\it Kodaira-N\'eron model} associated to $\Etproj$  and the computation of the numbers ${\rm contr}_v(P)$.  This will allow us to refine the algorithm described in the previous section.  We will assume a familiarity with several concepts from the algebraic geometry of surfaces {with a particular emphasis on  intersection theory   and  resolution of singularities via blowups (see for instance \cite[Chap 4]{Shafa1}).}

\subsubsection{The geometric objects}\label{subsec:geometricobjects}
One attaches to the kernel polynomial some geometric objects. We denote by $S([x_0: x_1],[ y_0:y_1])$ the homogeneous biquadratic polynomial defined by $x_1^2y_1^2S(\frac{x_0}{x_1},\frac{y_0}{y_1})$ in the notation of Section\ref{sec:ker}.
First, one can consider the pencil $\mathfrak C$  of biquadratic  curves $C_{[\lambda:\mu]}$ in $\P1(\C) \times \P1(\C)$ defined by 
$C_{[\lambda: \mu ]} =\{ ([x_0:x_1],[y_0:y_1]) \in \P1(\C) \times \P1(\C) |  \mu x_0x_1y_0y_1 -\lambda S([x_0:x_1],[y_0:y_1]) =0 \}$ whose base points, that are the common zeros of $ x_0x_1y_0y_1$ and 
$S([x_0: x_1],[ y_0:y_1])=0$ are represented in the  figure \ref{fig:positionbasepoints}.

\begin{figure}[h!]

  \begin{tikzpicture}[scale=1.5]
\coordinate (O) at (0:0);
\foreach \i in {1,...,4}{
\coordinate (A\i) at (\i * 90 -4 -45:2);
\coordinate (B\i) at (\i * 90 +94 -45 :2);
 \tkzDefBarycentricPoint(A\i=1,B\i=2)
  \tkzGetPoint{I\i};
   \tkzDefBarycentricPoint(A\i=2,B\i=1)
  \tkzGetPoint{J\i};
}
\coordinate (C1) at (0, 2);
\coordinate (C2) at (-2, 0);
\coordinate (C3) at (0, -2);
\coordinate (C4) at (2, 0);
 
\node[label=above: $Q_0$]  at (I1) {$\circ$};
   \node[label=above: $R_0$]  at (I2) {$\circ$};
    \node[label=above: $S_0$]  at (I3) {$\circ$};
     \node[label=above: $P_0$]  at (I4) {$\circ$};
     
      \node[label=above: $Q_1$]  at (J1) {$\circ$};
   \node[label=above: $R_1$]  at (J2) {$\circ$};
    \node[label=above: $S_1$]  at (J3) {$\circ$};
     \node[label=above: $P_1$]  at (J4) {$\circ$};
     \draw[red] plot[smooth cycle] coordinates {(C1) (I1) (J2) (C2) (I2) (J3) (C3) (I3) (J4) (C4) (I4) (J1)}node[pos=0.5,  left ]{\small{$S=0$}}  ;
     
\draw[blue] (A1)  -- (B1)  node[pos=0.5, below  ] {\textcolor{blue}{$y_1=0$}};
\draw[violet ](A2) --  (B2) node[pos=0.5, left ] {\textcolor{violet}{$x_0=0$}};
\draw[blue] (A3) --(B3) node[pos=0.5, below ] {\textcolor{blue}{$y_0=0$}} ;
\draw[violet] (A4) --(B4)  node[pos=0.5,  right ] {\textcolor{violet}{$x_1=0$}};
\end{tikzpicture}

\caption{Position of the base points } \label{fig:positionbasepoints}
\end{figure}
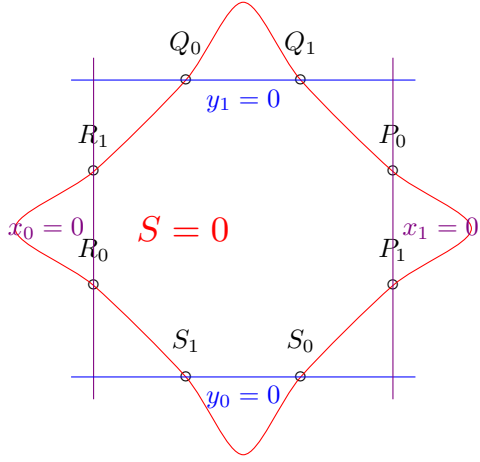 

  Any member of the pencil $\mathfrak{C}$  passes through  $\{P_0,P_1,Q_0,Q_1,R_0,R_1,S_0,S_1\}$.  There are $8$ of these base points counted with multiplicities. 

For any  pair of elements  $t_1,t_2 \in \C$, each transcendental over $\Q$, the curves $C_{[t_1:1]}$ and  $C_{[t_2:1]}$ are isomorphic over $\Q$. These curves are general members of the pencil. They are isomorphic to $\Etproj$ over $\C$.   The following Lemma shows how 
to construct a Kodaira-N\'{e}ron model  $\mathcal S$ attached to $\Etproj$, that is a relatively minimal fibration over $\P1(\C)$ with a rational  section and  whose general fiber is $\Etproj$. 

\begin{prop}[Cor. 3.3.10 and \S3.3.5 in \cite{DuistQRT}]\label{prop:ellipticsurfaceobtainedbybowingup}
Let $\mathcal{S}$ be the surface obtained by successively blowing up $\P1 \times \P1$ at the eight base points of the pencil $\mathfrak C$ counted with multiplicities.  Write $\pi=\pi_1\circ \hdots \circ \pi_8: \mathcal{S} \mapsto \P1\times \P1$. Then the space $W$ of holomorphic
$2$-vector fields  on $\Scal$ is two dimensional and $\Scal$ together with the  mapping $\kappa :\mathcal{S} \mapsto \P1(W), s \mapsto  \{w \in W |w(s)=0\}$ is a Kodaira-N\'{e}ron model for $\Etproj$. 
Moreover, the following holds
\begin{itemize}
\item  a member $C$ of the pencil $\mathfrak C$ is smooth if and only if its strict transform $\pi'(C)$ is a smooth fiber of $\kappa$ when $\pi|_{\pi'(C)} $ is an isomorphism from $\pi'(C)$ to $C$; In particular the general fiber of $\mathcal{S}$ is $\Etproj$. 
\item  $\kappa$ coincides with $\phi \circ \pi$ with $\phi: \P1 \times \P1 \dashrightarrow \P1, ([x_0:x_1],[y_0:y_1]) \mapsto (x_0x_1y_0y_1, S([x_0: x_1],[ y_0:y_1])$
on the open dense subset of $\Scal$ where $\phi  \circ \pi$ is defined.
\end{itemize}
\end{prop}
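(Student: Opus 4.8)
This is the statement recorded as Cor.~3.3.10 and \S3.3.5 of \cite{DuistQRT}; the plan is to reproduce that argument, structured around resolving the indeterminacy of the pencil $\mathfrak C$ and then identifying the surface one obtains.

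\emph{First} I would record the numerical facts about $\mathfrak C$. Its members are the curves of bidegree $(2,2)$ on $\P1\times\P1$, i.e.\ the elements of the complete linear system $|-K_{\P1\times\P1}|$, which is an $8$-dimensional projective space; the pencil $\mathfrak C$ spanned by $x_0x_1y_0y_1$ and $S([x_0:x_1],[y_0:y_1])$ is a line in it. Two distinct members meet in $(2H_1+2H_2)^2=8$ points counted with multiplicity, where $H_1,H_2$ are the two rulings, and this common intersection (which for a pencil is the same for any two distinct members) is the base scheme $Z$, of length $8$; the rational map $\phi$ attached to $\mathfrak C$ is undefined exactly along $Z$. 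Since the genus-one hypothesis makes the general member of $\mathfrak C$ a smooth curve, and every member contains $Z$, the subscheme $Z$ lies inside a smooth curve and is therefore curvilinear --- this is what lets one blow it up cleanly, passing to infinitely near points when two base points coincide, which is what ``counted with multiplicities'' encodes.

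\emph{Next} I would carry out the resolution: $\pi=\pi_1\circ\cdots\circ\pi_8\colon\Scal\to\P1\times\P1$ is the blow-up of $Z$, and it separates the strict transforms of the members of $\mathfrak C$, so $\phi\circ\pi$ extends to a morphism $\kappa\colon\Scal\to\P1$ whose fibres are exactly those strict transforms; this gives at once the identity $\kappa=\phi\circ\pi$ of the last displayed bullet. The general fibre is the strict transform of the general member, which is isomorphic to $\Etproj$ because $Z$ is curvilinear and contained in it, so each blow-up restricts to an isomorphism of the strict transform near the centre; the same argument gives the first bullet, in both directions: $C\in\mathfrak C$ is smooth if and only if $\pi$ restricts to an isomorphism of its strict transform onto $C$, in which case that strict transform is a smooth fibre of $\kappa$, and conversely a smooth fibre $F$ is the strict transform of its image $\pi(F)$, necessarily a smooth member. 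A section $\sigma_0$ is furnished by the exceptional curve over any one of the simple base points at which the two distinguished members $\{x_0x_1y_0y_1=0\}$ and $\{S=0\}$ cross transversally: such a curve meets every fibre of $\kappa$ in exactly one point.

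\emph{Then} comes the identification of $\Scal$ as a Kodaira--N\'eron model and of its holomorphic $2$-vector fields. A holomorphic $2$-vector field on a surface is a global section of $\bigwedge^2 T_\Scal=\mathcal O_\Scal(-K_\Scal)$, and $-K_\Scal=\pi^*(-K_{\P1\times\P1})-\sum_{i=1}^8 E_i$ (total exceptional divisor), so $W=H^0(\Scal,-K_\Scal)$ is the space of bidegree $(2,2)$ forms vanishing on $Z$, which is precisely the pencil $\mathfrak C$; hence $\dim W=2$ and $\mathbb{P}(W)$ is canonically the base $\P1$ of $\mathfrak C$. Under this identification the assignment $s\mapsto\{w\in W\mid w(s)=0\}$ picks out the unique member of $\mathfrak C$ through $\pi(s)$, which is $\kappa(s)=(\phi\circ\pi)(s)$ wherever $\phi\circ\pi$ is defined. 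It remains to check that $(\Scal,\kappa)$ is relatively minimal, i.e.\ that no fibre of $\kappa$ contains a $(-1)$-curve; since $\pi$ blows up exactly the base scheme of $\mathfrak C$ and nothing more, this holds (a numerical consistency check is $e(\Scal)=e(\P1\times\P1)+8=12$, the Euler characteristic of a rational elliptic surface), and together with $\sigma_0$ this makes $\Scal$ a Kodaira--N\'eron model of $\Etproj$. The main obstacle is precisely this last step: proving relative minimality --- and with it the claim $\dim W=2$, equivalently that $Z$ imposes only $7$ conditions on bidegree $(2,2)$ forms --- requires a genuine local analysis of how $Z$ and its infinitely near points sit inside the members of the pencil rather than a formal manipulation, and it is here that ``counted with multiplicities'' does its work; everything else is bookkeeping with blow-ups, linear systems and intersection numbers, for which I would transcribe the corresponding passages of \cite{DuistQRT}.
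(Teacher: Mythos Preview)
The paper does not supply its own proof of this proposition: it is stated with the attribution ``Cor.~3.3.10 and \S3.3.5 in \cite{DuistQRT}'' and used as a black box, with no argument given in the text. Your proposal is therefore not to be compared against a proof in the paper but against the cited source, and what you have written is a faithful and correct outline of the standard argument one finds there: identify the pencil with the anticanonical system, blow up the length-$8$ base scheme to resolve the rational map, recognise $W=H^0(\Scal,-K_\Scal)$ as the two-dimensional space of sections cut out by the base conditions, and verify relative minimality. You have also correctly isolated the one step that carries real content, namely that the eight (possibly infinitely near) base points impose independent conditions so that $\dim W=2$ and no extraneous $(-1)$-curves appear in the fibres; this is exactly where Duistermaat's local analysis in \S3.3.5 does its work, and your acknowledgement that this is ``the main obstacle'' is appropriate.
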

Note that the indeterminacy locus of the  rational map $\phi$ is precisely the set of base points.
A straightforward  corollary of Proposition \ref{prop:ellipticsurfaceobtainedbybowingup} is the following.
\begin{lem}\label{lem:rationalellipticsurface}
The Kodaira-N\'{e}ron model  $\mathcal S$ of 
$\Etproj$ is rational elliptic surface. 
\end{lem}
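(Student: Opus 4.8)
The plan is to read the statement off directly from Proposition~\ref{prop:ellipticsurfaceobtainedbybowingup}, which already exhibits $\Scal$ simultaneously as a Kodaira-N\'eron model (hence carrying a relatively minimal elliptic fibration $\kappa:\Scal\to\PX^1$ with a section and general fibre $\Etproj$) and as an iterated blowup $\pi=\pi_1\circ\cdots\circ\pi_8$ of $\PX^1\times\PX^1$ at the eight base points of the pencil $\mathfrak C$. The only thing one must still verify is that $\Scal$ is a \emph{rational} surface; combining this with the elliptic fibration structure is exactly the definition of a rational elliptic surface.

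First I would note that $\PX^1(\CX)\times\PX^1(\CX)$ is a smooth projective rational surface. Then I would invoke the standard fact that the blowup of a smooth projective surface at a closed point is a birational morphism, so the blown-up surface has the same function field and is again smooth, projective, and rational. Applying this inductively to the eight morphisms $\pi_1,\dots,\pi_8$, one concludes that $\Scal$ is a smooth projective rational surface.

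Finally I would assemble the pieces: by Proposition~\ref{prop:ellipticsurfaceobtainedbybowingup} the map $\kappa:\Scal\to\PX^1$ is a relatively minimal elliptic fibration with section whose general fibre is $\Etproj$, and by the previous step $\Scal$ is rational; hence $\Scal$ is a rational elliptic surface. As a consistency check one may observe that the number of blowups is exactly the expected one: a relatively minimal rational elliptic surface has Euler number $12$, and since the Euler number of $\PX^1\times\PX^1$ is $4$ and each blowup of a point raises it by $1$, we land at $4+8=12$; equivalently $\chi(\Scal)=1$, the value used in~\eqref{NT}.

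There is essentially no obstacle here: all the substance lies in Proposition~\ref{prop:ellipticsurfaceobtainedbybowingup}, and what remains is the elementary observation that blowing up points preserves rationality. The only point deserving a sentence of care is spelling out which definition of ``rational elliptic surface'' is in force (a relatively minimal elliptic surface over $\PX^1$ with a section which is moreover a rational surface), so that one sees that both halves of the definition have indeed been supplied.
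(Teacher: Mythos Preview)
Your proof is correct and follows essentially the same approach as the paper: the paper's one-line argument is simply that $\Scal$ is birational to $\PX^1\times\PX^1$ via $\pi$ (and $\PX^1\times\PX^1$ is birational to $\mathbf{P}^2$), which is exactly your observation that the iterated blowup $\pi$ preserves rationality. Your additional remarks about the elliptic fibration from Proposition~\ref{prop:ellipticsurfaceobtainedbybowingup} and the Euler-number consistency check are sound but go beyond what the paper records.
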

\begin{proof}
Indeed, it is birational to $\P1 \times \P1$ via $\pi$ and $\P1 \times \P1$ is birational to $\bold{P}^2$.
\end{proof}

 Proposition 5.4 of \cite{SchuttShiodaBook} describes the correspondence between  $\C(t)$-points of $E$ and rational sections of $\kappa:\mathcal{S} \rightarrow \P1$.  The following Lemma shows how  one can make explicit this dictionary   in the special cases of   base points.

 \begin{lemma}\label{lem:basepointssections}
Let $P=(a,b) \in \P1 \times \P1$ be a base point. Then the multiplicity $m$ of  $P$ as base point is less than or equal to $3$.   Moreover,  the last exceptional divisor $\mathcal{E}_{(a,b)}$
obtained by blowing up $m$ times at $P$ is  the image of the  section of $\mathcal{S}$ that corresponds to  the point $(a,b)$ in  $E$. 
\end{lemma}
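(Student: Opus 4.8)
The plan is to analyze the sequence of blowups at a base point $P=(a,b)$ and identify the resulting exceptional curves with sections of $\kappa$. First I would recall the key mechanism from Proposition~\ref{prop:ellipticsurfaceobtainedbybowingup}: the Kodaira--N\'eron model $\Scal$ is obtained by resolving the indeterminacy of the pencil map $\phi : \P1\times\P1 \dashrightarrow \P1$, and the fibration $\kappa$ restricted to $\Scal$ coincides with $\phi\circ\pi$ where it is defined. Since the members $C_{[\lambda:\mu]}$ of the pencil are biquadratic curves (bidegree $(2,2)$), their intersection number in $\P1\times\P1$ is $C\cdot C' = 2\cdot 2 + 2\cdot 2 = 8$; as every member passes through the eight base points and two general members meet only there, the total multiplicity of the base locus is exactly $8$. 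The bound $m\le 3$ on the multiplicity of a single base point $P$ then follows from examining the local picture: the two generators of the pencil near $P$ are $x_0x_1y_0y_1$ (vanishing to order $1$ or $2$ along the coordinate lines through $P$) and $S$; since $P$ lies on at most two of the four coordinate lines (two from the $x$-factor pair $\{x_0=0\},\{x_1=0\}$ cannot both pass through $P$, similarly for $y$), the local intersection multiplicity at $P$ of two general pencil members is at most $\le 4$, and a finer analysis using that $S$ is reduced and the configuration in Figure~\ref{fig:positionbasepoints} pins it down to $m\le 3$. (Alternatively, one invokes Cor.~3.3.10 of \cite{DuistQRT} directly, which is the cited source.)

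Next I would carry out the identification of the last exceptional divisor with a section. The point is that each blowup $\pi_i$ either separates two local branches of the pencil or drops the common multiplicity by one; after $m$ blowups over $P$, the strict transforms of the pencil members no longer all pass through a common point of the last exceptional divisor $\mathcal{E}_{(a,b)}\cong \P1$. Concretely, on $\mathcal{E}_{(a,b)}$ the map $\kappa$ becomes a well-defined morphism (the indeterminacy at $P$ has been resolved), and I would show this morphism restricted to $\mathcal{E}_{(a,b)}$ is an isomorphism onto $\P1$: this is because the last exceptional divisor is a $(-1)$-curve meeting the final reduced configuration transversally in exactly one point of each strict-transformed fiber, so $\mathcal{E}_{(a,b)}$ meets every fiber of $\kappa$ in exactly one point. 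A curve on $\Scal$ meeting every fiber of $\kappa$ exactly once is, by definition (and via the correspondence of Proposition 5.4 of \cite{SchuttShiodaBook}), the image of a section $\sigma_Q$ of $\kappa$. It remains to identify the corresponding point $Q\in E(\C(t))$: on a general fiber $C = C_{[t:1]}$, its strict transform in $\Scal$ meets $\mathcal{E}_{(a,b)}$ at the point to which $P$ "lifts" on that fiber, and since $\pi$ contracts $\mathcal{E}_{(a,b)}$ precisely to $P=(a,b)$, the section $\sigma_Q$ corresponds to the $\C(t)$-point whose specialization on each smooth member passes through (the appropriate lift of) $(a,b)$; one checks this is the point $(a,b)$ itself viewed in $E(\C(t))$, i.e.\ the constant section with value $(a,b)$ — but since $P$ is a base point lying on the coordinate lines (where $x$ or $y$ takes value $0$ or $\infty$), this is genuinely a $\C(t)$-point of $E$ with constant coordinates.

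The main obstacle I anticipate is the bookkeeping in the multiplicity analysis and, relatedly, verifying carefully that the final exceptional divisor — as opposed to an intermediate one in the tower $\pi_1,\dots,\pi_m$ over $P$ — is the one that becomes a section. An intermediate exceptional divisor may still be a component of a singular fiber rather than a section; one must track at each stage whether the infinitely-near base point is a simple point of the pencil (blowup separates branches, producing the section at the next step) or still multiple (producing a fiber component). I would handle this by appealing to the structure of the local equation: writing the pencil locally as $\mu\cdot u + \lambda\cdot s = 0$ with $u$ a monomial and $s$ the local equation of $S$, the resolution is a standard toric/Newton-polygon computation, and the last exceptional divisor corresponds to the vertex of the resolution graph attached to the "generic" branch, hence a section. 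For the purposes of this Lemma I would lean on \cite[\S3.3.5, Cor.~3.3.10]{DuistQRT} for the precise local analysis rather than reproduce it, since the cited reference establishes exactly this correspondence in the QRT setting, which our pencil $\mathfrak{C}$ is a special case of.
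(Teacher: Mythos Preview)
Your proposal is correct in outline but differs from the paper's proof, which is far more terse. The paper dispatches both claims in two sentences: for $m\le 3$ it argues that if $m\ge 4$ then every member of the pencil would be singular at $P$, contradicting the smoothness of the genus-one curve $\Etproj$; for the section identification it simply cites \cite[Cor.~3.3.9]{DuistQRT}.

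Your route to $m\le 3$ via intersection numbers is sound but your ``finer analysis'' step is misattributed. Reducedness of $S$ is not the relevant hypothesis; what actually closes the gap between your bound of $4$ and the claimed $3$ is the smoothness of the general member $\Etproj$. At a corner, $C_{[0:1]}$ consists locally of two transversal coordinate lines $\ell_1,\ell_2$, and since $\Etproj$ is smooth at $P$ it has a unique tangent direction there, so it can be tangent (to order $2$) to at most one of $\ell_1,\ell_2$; hence $I_P(\Etproj,C_{[0:1]})\le 2+1=3$. This is exactly the contrapositive of the paper's one-line argument. Your sketch of the second assertion---that the last exceptional divisor is a $(-1)$-curve meeting each fiber transversally in one point and hence a section corresponding to the constant point $(a,b)\in E(\CX(t))$---is correct and supplies more detail than the paper, which defers entirely to Duistermaat.
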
 
\begin{proof}
The multiplicity is  less than or equal to $3$ because otherwise the base point would be singular for any member of the pencil contradicting the fact that $\Etproj$ is a genus one curve. {The second assertion is \cite[Cor.3.3.9]{DuistQRT}}. \end{proof}

In Section\ref{sec:algo}, we use Propositions \ref{prop:sameorbitQiequalsPj} and \ref{prop:sameorbitQjneqP_i} to implement an algorithm, which allows us to decide if the weighted model was decoupled or not.  We use the formula (\ref{NT}) defining the N\'{e}ron-Tate height and claimed that when we apply this formula in our situation, the term representing the  intersection multiplicity $(\Pcal.\Ocal)$ is zero.
 
The main purpose of the following lemma is to verify this claim.

\begin{lem}\label{lem:intersectionsection}
Assume that $\Etproj$ is a genus $1$ curve and that there is no $P_j$'s and $Q_k$'s that are simultaneously fixed by an involution. The following holds:
\begin{itemize}
\item \emph{ Case $1$: $P_j=Q_k$ for some $j$ and $k$} Then, the section $\mathcal{P}_{j+1}$ has empty intersection with $\mathcal{P}_j$ and $\mathcal{Q}_{k+1}$, which is the section corresponding to $\tau(P_{j+1})=Q_{k+1}$.
\item \emph{ Case $2$: $P_j \neq Q_k$ for any $j$ and $k$} Then, the section $\tau^{-1}(\mathcal{Q}_k)$ corresponding to the point $\tau^{-1}(Q_k)$ does not intersect the sections $\mathcal{Q}_k$ and $\mathcal{P}_j$. 
\end{itemize}
\end{lem}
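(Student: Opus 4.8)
The plan is to translate the claimed non-intersection of sections into a statement about the points of $\Etproj$ using the standard dictionary between intersection numbers on a rational elliptic surface and arithmetic on the generic fiber. Concretely, for two distinct $\C(t)$-points $A,B \in \Etproj$, the associated sections $\mathcal A$ and $\mathcal B$ in $\Scal$ intersect (over some fiber, i.e. for some specialization of $t$) if and only if $A$ and $B$ specialize to the same point in that fiber; in particular, if $A$ and $B$ are \emph{base points} of the pencil $\mathfrak C$, then by Lemma~\ref{lem:basepointssections} their sections are the final exceptional divisors $\mathcal E_A$, $\mathcal E_B$ obtained by blowing up $A$ and $B$, and two such exceptional divisors are disjoint as soon as $A\neq B$ as points of $\P1\times\P1$ (blowing up distinct points produces disjoint exceptional loci). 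So the first reduction is: it suffices to check that the relevant points are pairwise distinct \emph{as points of $\P1\times\P1$}, and, when one of the sections is not a base-point section (e.g. $\tau^{-1}(\mathcal Q_k)$ when $\tau^{-1}(Q_k)$ is not among the base points), to rule out a common specialization directly.

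First I would treat Case~1, where $P_j=Q_k=(\infty,\infty)$ (this forces $d_{1,1}=0$ by the discussion before Proposition~\ref{prop:sameorbitQiequalsPj}, and $P_j\neq P_{j+1}$, $Q_k\neq Q_{k+1}$ since no $P_i$ and $Q_l$ are simultaneously fixed by an involution). Here $P_{j+1}$, $P_j$, $Q_{k+1}$ are all among the eight base points $\{P_0,P_1,Q_0,Q_1,R_0,R_1,S_0,S_1\}$ of Figure~\ref{fig:positionbasepoints}. Since $P_{j+1}=(\infty,[b_{j+1,0}:b_{j+1,1}])$ with finite second coordinate while $P_j=(\infty,\infty)$, these are distinct points of $\P1\times\P1$; and $Q_{k+1}=([a_{k+1,0}:a_{k+1,1}],\infty)$ with finite first coordinate (again by $P_j\neq Q_k$ and $d_{1,1}=0$ forcing the $Q$'s off $x=\infty$ unless they coincide with a $P$), hence $Q_{k+1}\neq P_{j+1}$. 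By Lemma~\ref{lem:basepointssections} the sections $\mathcal P_{j+1}$, $\mathcal P_j$, $\mathcal Q_{k+1}$ are the last exceptional divisors over three distinct base points, so they are pairwise disjoint in $\Scal$; this gives the two required empty intersections. (One should also note $\tau(P_{j+1})=Q_{k+1}$ is exactly Lemma~\ref{lem:useful}.2 specialized here, which is why the statement phrases it that way.)

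Next I would treat Case~2, where $P_j\neq Q_k$ for all $j,k$, so $\{P_0,P_1\}\cap\{Q_0,Q_1\}=\emptyset$ and none of these four is $(\infty,\infty)$. The points $P_j$, $Q_k$ are base points, so their sections are disjoint exceptional divisors and $(\mathcal P_j.\mathcal Q_k)=0$. The new ingredient is $\tau^{-1}(Q_k)$: by Lemma~\ref{lem:useful}.1 this equals $\ita(Q_l)$ for $l\neq k$, which, inspecting Figure~\ref{fig:positionbasepoints}, is again one of the eight base points (it is the third coordinate-axis intersection on the fibre $x=a_l$). Thus $\tau^{-1}(\mathcal Q_k)=\mathcal E_{\ita(Q_l)}$ is also a base-point section, and I would check that $\ita(Q_l)$ differs, as a point of $\P1\times\P1$, from both $Q_k$ and $P_j$: it has first coordinate $a_l\neq a_k$ (the $Q$'s have distinct first coordinates once $Q_0\neq Q_1$, and if $Q_0=Q_1$ the argument is adapted using that $\ita(Q_0)\neq Q_0$ since no $Q$ is fixed by $\ita$ in the relevant configuration) and its first coordinate $a_l$ is finite, hence $\neq \infty$, the first coordinate of any $P_j$ that could cause trouble — and more carefully $\ita(Q_l)=P_j$ would force, applying $\ita$, $Q_l=P_{j'}$, contradicting the Case~2 hypothesis. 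Hence all three exceptional divisors are over distinct base points and pairwise disjoint.

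The main obstacle I anticipate is the bookkeeping of degenerate sub-configurations: when some of the base points collide (e.g. $Q_0=Q_1$, or a base point has multiplicity $2$ or $3$), one must argue that the \emph{last} exceptional divisors still separate, which requires knowing the order in which the infinitely near points are blown up and that two genuinely distinct sections of a relatively minimal elliptic surface never meet a common exceptional locus — here Lemma~\ref{lem:basepointssections} (via \cite[Cor.~3.3.9]{DuistQRT}) does the heavy lifting, but one has to be careful that the points in question are honestly distinct base points and not the same infinitely near point. A secondary point to handle cleanly is the case where $\tau^{-1}(Q_k)$ is \emph{not} a base point at all (which can occur in some configurations of Case~2): there one cannot use Lemma~\ref{lem:basepointssections} and must instead argue that $\tau^{-1}(Q_k)$, $Q_k$, $P_j$ have pairwise distinct specializations for every $t$ transcendental over $\Q$ — equivalently that the corresponding points of $\Etproj(\C(t))$ are distinct and no two of them become equal in any fibre, which follows because they differ by the infinite-order translation $\tau$ or lie on different coordinate axes, using the remarks following Lemma~\ref{lem:inforder} that $\tau^n$ fixes no point for $n\neq 0$.
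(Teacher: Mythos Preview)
Your treatment of Case~1 is fine and matches the paper's argument: $P_{j+1}$, $P_j$, $Q_{k+1}$ are three distinct base points, so their sections are contained in the fibers of $\pi$ over three distinct points of $\P1\times\P1$ and cannot meet.

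The gap is in Case~2. Your central claim there, that $\tau^{-1}(Q_k)=\ita(Q_{k+1})$ is ``again one of the eight base points (it is the third coordinate-axis intersection on the fibre $x=a_l$)'', is false. The base points of the pencil are precisely the common zeros of $x_0x_1y_0y_1$ and $S$; they all lie on one of the four coordinate lines. The point $\ita(Q_{k+1})=(a_{k+1},c_{k+1})$ has first coordinate $a_{k+1}$, which is finite and generically nonzero, and its second coordinate $c_{k+1}$ is a $t$-dependent quantity which is generically neither $0$ nor $\infty$. So $\ita(Q_{k+1})$ is typically \emph{not} a base point, and Lemma~\ref{lem:basepointssections} does not apply to it. This is the generic situation, not the exceptional one you relegate to the final paragraph.

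Your proposed fallback (``$\tau^{-1}(Q_k)$, $Q_k$, $P_j$ have pairwise distinct specializations for every $t$ transcendental over $\Q$ \ldots\ using \ldots\ that $\tau^n$ fixes no point for $n\neq 0$'') does not close the gap either. The intersection number $(\tau^{-1}(\mathcal Q_k).\mathcal Q_k)$ on the surface $\Scal$ counts intersections over \emph{all} fibers, including the singular ones at $t=0$ and possibly other algebraic values of $t$; the infinite-order statement for $\tau$ concerns only the generic fiber. The paper handles this by working directly with the blowup $\pi:\Scal\to\P1\times\P1$: it writes down the curve $C\subset\P1\times\P1$ (the vertical line $x=a_{k+1}$, suitably parametrized) whose strict transform is the section $\tau^{-1}(\mathcal Q_k)$, observes that $P_j\notin C$ so $\mathcal P_j\cap\tau^{-1}(\mathcal Q_k)=\emptyset$, and then, when $Q_k=Q_{k+1}$ so that $C$ does pass through $Q_k$, carries out an explicit tangent-direction computation at the first blowup to show that the strict transform of $C$ separates from the infinitely near point $Q_k^{(1)}$ at which the second blowup (producing $\mathcal Q_k$) is centered. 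That local computation is the missing ingredient in your plan.
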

\begin{proof}
In the first case, we have $P_0 \neq P_1$ and $Q_0 \neq Q_1$ by assumption. For simplicity, let us assume that $P_0=Q_0$. By Lemma \ref{lem:basepointssections}, the section $\mathcal{P}_1$ (resp.~$\mathcal{Q}_1$, $\mathcal{P}_0$) is the last  exceptional divisor obtained by blowing up at $P_1$ (resp.~$Q_1$,$P_0$). Then, $\mathcal{P}_1 \subset \pi^{-1}(P_1)$, $\mathcal{P}_0 \subset \pi^{-1}(P_0)$ and $\mathcal{Q}_1 \subset \pi^{-1}(Q_1)$. Since $P_1 \neq Q_1$ and $P_1 \neq P_0$, we conclude that $\mathcal{P}_1$ has empty intersection with $\mathcal{Q}_1$
and $\mathcal{P}_0$.

In the second case, let $\alpha \in \C$ such that $Q_{k+1}=(\alpha, \infty)$. Then,$\tau^{-1}(Q_k)$ is the point $(\alpha , [-t(\sum d_{i,-1}\alpha^{i+1}): \alpha-t(\sum d_{i,0}\alpha^{i+1})])$. Let us now consider the curve $C$ in $\P1 \times \P1$ defined by 
$C=\{ (\alpha , [-t_0(\sum d_{i,-1}\alpha^{i+1}): t_1\alpha-t_0(\sum d_{i,0}\alpha^{i+1})]\mbox { with } [t_0:t_1] \in \P1 \}$.  The strict transform of $C$ by $\pi$ corresponds to the section $\tau^{-1}(\mathcal{Q}_k)$. Then, it is easily seen that $\mathcal{P}_j$ does not intersect $\tau^{-1}(\mathcal{Q}_k)$ because $P_j$ does not belong to $C$. If $Q_k \neq Q_{k+1}$ then $Q_k$ does not belong to $C$ so that $\mathcal{Q}_k$ does not intersect $\tau^{-1}(\mathcal{Q}_k)$. If $Q_k=Q_{k+1}$ then the multiplicity of $Q_k$ is  $2$ if $\alpha \neq 0$ and $3$ if $\alpha = 0$.  Since the curve is non singular, the point $Q_{k+1}$ is not fixed by $\ita$. Thus we need to blow up at least two times at $Q_k$. At the  first blowup $\pi_1$ at $Q_k$, the strict transforms of the curve $y_1=0$ and $S([x_0:x_1],[y_0:y_1])=0)$ still intersect the  exceptional divisor at the same point $Q_k^{(1)}$ because they have the same tangent at $Q_k$. The second blowup will be performed at the $Q_k^{(1)}$. Since the curve $C$ has not the same tangent than $y_1=0$ at $Q_k$, it intersects the exceptional divisor at some point $P \neq Q_k^{(1)}$. Then, one can reason as above to conclude that the sections $\mathcal{Q}_k$ and $\tau^{-1}(\mathcal{Q}_k)$ do not intersect because the first one is contracted on $Q_k^{(1)}$ by $\pi_2 \circ \dots \circ \pi_8$ whereas the second is sent on a curve that does not pass through $Q_k^{(1)}$.
\end{proof}

\begin{rmk}\label{rmk:symmetryintersection}
Using some symmetry arguments as in Lemma \ref{lemma:symmetryxandy}, one can easily deduce from  Lemma \ref{lem:intersectionsection} that
\begin{itemize}
\item \emph{ Case $1$: $P_j=Q_k$ for some $j$ and $k$} Then, the section $\mathcal{Q}_{k+1}$ has empty intersection with $\mathcal{P}_j$ and $\mathcal{Q}_{k}$, which is the section corresponding to $\tau(P_{j+1})=Q_{k+1}$.
\item \emph{ Case $2$: $P_j \neq Q_k$ for any $j$ and $k$} Then, the section $\tau(\mathcal{P}_j)$  does not intersect the sections $\mathcal{Q}_k$ and $\mathcal{P}_j$. 
\end{itemize}

\end{rmk}

\subsubsection{The fiber above zero}\label{subsubsec:fiberzero}
The construction of $\pi$ {aims at separating} the members of the pencil $\mathfrak C$ so that they define an elliptic fibration. In order to understand the type of the fiber $F_0$ above zero of $\Scal$, one has to understand how the curve $C_{[0:1]}:=\{([x_0:x_1],[y_0:y_1]) \in \P1(\C) \times \P1(\C) | x_0x_1y_0y_1 =0 \}$ behaves after each blowup.

{In Example~\ref{exa:algo} of Section~\ref{sec:algo}, computing the Weierstrass form and applying the table related to the Tate algorithm, allows us to conclude that the Kodaira type of the fiber $F_0$ above $0$ is $I_n$ with $n=7$.  This is an instance of the following result which we prove in this section. In Section~\ref{sec:exs}, we show in two examples that by calculating $F_0$ one can furthermore determine the contribution ${\rm contr}_0(P)$ in a more exact manner, sharpening the computation described in  Section~\ref{sec:algo}.}   \\

 \begin{lem}
The type of $F_0$ is $I_{n}$ where the 
number $n$ of components of $F_0$ varies between $4$ and $9$ depending on the multiplicity and the position of the base points. \end{lem}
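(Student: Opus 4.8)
The plan is to analyze the fiber $F_0$ above $0$ by tracking the proper transforms of the four lines $x_0 = 0$, $x_1 = 0$, $y_0 = 0$, $y_1 = 0$ composing the curve $C_{[0:1]}$ through the sequence of eight blowups $\pi = \pi_1 \circ \cdots \circ \pi_8$ that produces the Kodaira--N\'eron model $\mathcal{S}$. Since $\Etproj$ has genus one, $\mathcal{S}$ is a rational elliptic surface (Lemma~\ref{lem:rationalellipticsurface}), and from the Weierstrass-equation computation of Example~\ref{exa:algo} one knows that for generic weights the Tate algorithm outputs a fiber of type $I_n$; the point is to bound $n$ geometrically. Recall that for a rational elliptic surface the Euler number is $12$, so $\sum_v e(F_v) = 12$; a fiber of type $I_n$ has Euler number $n$, and each other singular fiber contributes at least $1$ (in fact each $I_m^*,\, III^*,\dots$ contributes considerably more). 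Combined with the constraint that the base points, counted with multiplicity, number exactly $8$ and that $F_0$ is a cycle of rational curves (type $I_n$), this will pin $n$ between $4$ and $9$.

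First I would describe $C_{[0:1]}$ as the union of the four coordinate lines, which pairwise meet at the four ``corner'' points $([0{:}1],[0{:}1])$, $([0{:}1],[1{:}0])$, $([1{:}0],[0{:}1])$, $([1{:}0],[1{:}0])$ of $\P^1 \times \P^1$ and, in its original incarnation in $\P^1\times\P^1$, is a curve of arithmetic genus $1$ with four nodes at these corners (or rather, after the Noether-type normalization, a $4$-cycle of lines). The base points $\{P_0,P_1,Q_0,Q_1,R_0,R_1,S_0,S_1\}$ of the pencil $\mathfrak{C}$ (Figure~\ref{fig:positionbasepoints}) lie on the four coordinate lines, two on each, possibly with coincidences and multiplicities up to $3$ (Lemma~\ref{lem:basepointssections}). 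Each blowup at a base point lying on one of these lines either separates that line from the rest of $C_{[0:1]}$, introducing an exceptional component into $F_0$, or, when performed at a corner or at an infinitely near point, splits the local branches and again introduces a component. The resulting $F_0$ after all eight blowups is the disjoint-from-nothing connected cycle whose components are: the proper transforms of the four coordinate lines, together with the exceptional curves created at base points of multiplicity $\geq 2$ that are not reabsorbed. Carefully matching each blowup to whether it adds a chain link to the $I_n$-cycle gives the count.

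The key combinatorial bookkeeping runs as follows. The four coordinate lines always survive as four components of $F_0$. At each ordinary base point of multiplicity one sitting on a coordinate line, the blowup contributes $0$ extra components to $F_0$ (it merely resolves the indeterminacy of $\phi$ there). At base points of higher multiplicity or at corner points shared by two coordinate lines, one gets an additional exceptional component in $F_0$. Since there are $8$ base points counted with multiplicity and each coordinate line must carry total base-point multiplicity $2$ (because $x_0 x_1 y_0 y_1$ restricted to a coordinate line is a degree-$2$ form and the other generator $S$ of the pencil meets each line in $2$ points), a case analysis over the distributions of the eight base-point multiplicities among the four lines and corners yields that the number of extra components ranges from $0$ to $5$, hence $n = 4 + (\text{extra components})$ lies between $4$ and $9$. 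The upper bound $9$ is also forced by $\sum_v e(F_v) = 12$ together with the fact (used implicitly throughout, and guaranteed since the group of the walk is infinite and $\tau$ has infinite order, so the Mordell--Weil rank is positive, c.f.~Remark~\ref{rmk:finiteheight}) that there is at least one other reducible fiber contributing to the Euler number; more simply, $n=12$ would leave no room for the extra singular fibers the pencil manifestly possesses away from $0$, and the local shape of $C_{[0:1]}$ forbids $n \in \{10,11\}$ as well.

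The main obstacle I expect is the exhaustive verification that every admissible configuration of the eight base points---with their possible coincidences, infinitely near positions, and multiplicities up to $3$, subject to the non-degeneracy and genus-one hypotheses---produces exactly a cycle-type fiber $I_n$ with $4 \le n \le 9$, rather than some collapsed or exceptional configuration. This requires patiently going through the blowup tree at each corner and at each pair of base points lying on a common coordinate line, using Lemma~\ref{lem:basepointssections} to identify which exceptional divisors become sections (and hence are \emph{not} part of $F_0$) versus which remain fiber components, and checking the tangency conditions (as in the proof of Lemma~\ref{lem:intersectionsection}) that determine whether successive blowups are ``aligned'' or separate the branches. Once this local analysis is organized by the multiplicity partition of $8$ over the four lines, the Euler-number constraint $\sum_v e(F_v) = 12$ on the rational elliptic surface $\mathcal{S}$ closes the argument by ruling out the extreme values and confirming the range.
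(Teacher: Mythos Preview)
Your overall strategy---tracking the four coordinate lines of $C_{[0:1]}$ through the eight blowups and counting which exceptional divisors become fiber components---is precisely the paper's approach, and the lower bound $n \geq 4$ from the four surviving proper transforms is correct.

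However, your bookkeeping rule ``at base points of higher multiplicity\ldots one gets an additional exceptional component in $F_0$'' is wrong in one case, and this matters. When two base points coincide on a coordinate line \emph{away from a corner} (e.g.\ $Q_0=Q_1=(a,\infty)$ with $a\neq 0,\infty$), the first exceptional divisor $\mathcal{E}_1$ does \emph{not} land in $F_0$: a short computation shows it becomes a component of a \emph{different} singular fiber $F_{[\lambda_a:\mu_a]}$ of the pencil (the member that is singular at $(a,0)$). So a multiplicity-two base point off a corner contributes nothing to $F_0$, whereas a corner coincidence of two (resp.\ three) base points contributes one (resp.\ two) extra components. With this corrected rule the case analysis over the distributions of the eight base points gives $n\in\{4,\ldots,9\}$ directly.

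Your Euler-number detour is both unnecessary and insufficient: $\sum_v e(F_v)=12$ alone only gives $n\leq 12$, and your assertion that ``the local shape of $C_{[0:1]}$ forbids $n\in\{10,11\}$'' is exactly the combinatorial blowup analysis you are trying to avoid. The sharp bound $n\leq 9$ comes from the fact that each line carries total base-point multiplicity $2$, there are four corners, and a corner can absorb at most a triple (contributing $+2$); maximizing under these constraints gives $+5$. The paper obtains both bounds from the same local computation, so once you carry out the three local pictures (corner double, corner triple, line double) correctly, no global invariant is needed.
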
 

Then, according to \cite[Table 8.2]{SchuttShiodaBook}, there are precisely 
\begin{itemize}
\item one possible root lattice when $n=9$, 
\item two possible root lattices when $n=8$,
\item two possible root lattices when $n=7$, 
\item five  possible root lattices when $n=6$,
\item seven  possible root lattices when $n=5$,
\item $19$ possible root lattices when $n=4$.
\end{itemize}
All together, there are at worst  $28$ distinct root lattices, which can be associated to   $\Scal$. 
Thus, the number of possibilities for the local contributions of the singular fibers is quite low once one has determined the local contribution of the fiber above $0$.  In the rest of this section, we show how to determine the number of components $n$ of the fiber $F_0$  with respect to the multiplicity of the base points and their relative positions. {Knowing the relative position of these components allows us to decrease the number of cases considered in the algorithm.}\\

\paragraph{\emph{A.~No multiple base points}}

Then the multiplicity of $C_{[0:1]}$ at each base point is $1$. The \emph{strict transform } of $C_{[0:1]}$ is the fiber above $0$. It is  a cycle of $n=4$ projective lines. The sections corresponding to the base points are exactly the $8$ exceptional divisors  and their intersection with $F_0$  is similar to Figure \ref{fig:positionbasepoints}. \\


\paragraph{\emph{B.~Multiple base points}}

In this paragraph, we show how the multiplicity of a base point contributes to the number of components of $F_0$. There are three cases. \\

\subparagraph{\emph{B.1~Two base points in a corner}}

Assume that for instance $Q_0=R_0$ and $Q_0 \notin \{R_1,Q_1\}$. We  perform a first blowup at $Q_0=R_0$ and we choose the affine chart of  $\bold{A}^2 \subset \P1 \times \P1$ given by $x_1=1, y_0=0$. The coordinate of this chart are $x:=x_0$ and $y:=y_1$. By assumption, 
$S(x,y)=d_{-1,0}y +d_{0,1}x + R(x,y)$ with $R(x,y)$ with no monomials of degree less than or equal to $1$ and $d_{-1,0}d_{0,1}\neq 0$. 
In this chart, the blowup of $Q_0$ consists in considering the map $\pi_1: X \rightarrow \P1  (x,y)\times[u:v] \mapsto  (x,y)$ where $X= \{(x,y)\times [u:v] |ux=vy \} \subset \bold{A}^2 \times \P1$. In the chart $u=1$, the exceptional divisor $\mathcal{E}_1$ is given by $y=0$.  The total transform of a member $C_{[\lambda:\mu]}$ is given by the zero set of
\begin{align*}
\mu xy -\lambda S(x,y) =&\mu vy^2 - \lambda (d_{-1,0}y +d_{0,1}vy + R(vy,y))\\
=&y\left(\mu vy -\lambda(d_{-1,0} +d_{0,1}v + R'(v,y)) \right),
\end{align*}
where $R'(v,y)= R(vy,y)/y$. Thus,  the strict transform of a general member of the pencil is given by 
$\mu vy -\lambda(d_{-1,0} +d_{0,1}v + R'(v,y))=0$.  This defines a new pencil $\mathfrak D$. The member of $\mathfrak{D}$ over zero corresponds to $vy=0$ and is therefore equal to the union  of the  proper transform of $C_{[0:1]}$ and of the first exceptional divisor $\mathcal{E}_1$. Moreover,   one can easily see that all members of  $\mathfrak{D}$ intersect $\mathcal{E}_1$ at the point $Q_0^{(1)}$ with coordinates $v=-\frac{d_{0,1}}{d_{-1,0}}, y=0$. A second blowup at this point yields a separation of the members of the pencil and resolves the singularity of the rational map  $\phi$ defined in Proposition \ref{prop:ellipticsurfaceobtainedbybowingup}  at $Q_0$. One concludes that each time this case happens one has to add a new component at the proper transform of $C_{[0:1]}$. The last exceptional divisor $\mathcal{E}_2$ corresponds to the section $\mathcal{Q}_0$.  It intersects $F_0$ at some point $Q_0^{(2)}$ of $\mathcal{E}_1$.

\begin{figure}[h!]

\centering
\subfloat[$Q_0=R_1$]{\label{fig:in}

\begin{tikzpicture}[scale=1.5]
\coordinate (O) at (0:0);
\foreach \i in {1,...,4}{
\coordinate (A\i) at (\i * 90 -4 -45:2);
\coordinate (B\i) at (\i * 90 +94 -45 :2);
 \tkzDefBarycentricPoint(A\i=1,B\i=2)
  \tkzGetPoint{I\i};
   \tkzDefBarycentricPoint(A\i=2,B\i=1)
  \tkzGetPoint{J\i};
}

   \node[label=above: $R_0$]  at (I2) {$\circ$};
    \node[label=above: $S_0$]  at (I3) {$\circ$};
     \node[label=above: $P_0$]  at (I4) {$\circ$};
     
      \node[label=above: $Q_1$]  at (J1) {$\circ$};
    \node[label=above: $S_1$]  at (J3) {$\circ$};
     \node[label=above: $P_1$]  at (J4) {$\circ$};
\draw [name path=A1--B1] (A1) to[ swap] (B1);
\draw [name path=A2--B2](A2) to[ swap] (B2);
\draw (A3) to[ swap] (B3);
\draw (A4) to[ swap] (B4);
\path [name intersections={of=A1--B1 and A2--B2,by=E}];
\node  [label=above left: $Q_0$] at (E) {$\circ$};
\end{tikzpicture}
}
\quad
\subfloat[After two Blowups]{\label{fig:in}
\begin{tikzpicture}[scale=1.5]
\coordinate (O) at (0:0);
\foreach \i in {1,...,4}{
\coordinate (A\i) at (\i * 90 -4 -45:2);
\coordinate (B\i) at (\i * 90 +94 -45 :2);
 \tkzDefBarycentricPoint(A\i=1,B\i=2)
  \tkzGetPoint{I\i};
   \tkzDefBarycentricPoint(A\i=2,B\i=1)
  \tkzGetPoint{J\i};
}

   \node[label=above: $R_0$]  at (I2) {$\circ$};
    \node[label=above: $S_0$]  at (I3) {$\circ$};
     \node[label=above: $P_0$]  at (I4) {$\circ$};
     
      \node[label=above: $Q_1$]  at (J1) {$\circ$};
    \node[label=above: $S_1$]  at (J3) {$\circ$};
    \node[label=above: $P_1$]  at (J4) {$\circ$};
\draw (A3) to[ swap] (B3);
\draw (A4) to[ swap] (B4);
\tkzDefBarycentricPoint(A2=3,B2=1)
  \tkzGetPoint{C2};
\tkzDefBarycentricPoint(A1=1,B1=3)
  \tkzGetPoint{C1};
  \tkzDefBarycentricPoint(A2=3,B2=2)
  \tkzGetPoint{C22};
\tkzDefBarycentricPoint(A1=2,B1=3)
  \tkzGetPoint{C11};
\tkzDefBarycentricPoint(C22=-3,C11=1)
  \tkzGetPoint{C3};  
\tkzDefBarycentricPoint(C11=-3,C22=1)
  \tkzGetPoint{C4};  
  
  \tkzDefBarycentricPoint(C3=1,C4=1)
  \tkzGetPoint{Q0};
  \node[label=above: $Q_0^{(2)}$]  at (Q0) {$\circ$};

\draw (A1) to [swap] (C1);
\draw (B2) to [swap] (C2);
\draw [blue, label=$E_1$] (C3) --  (C4) node[pos=0.5, below left ] {$\mathcal{E}_1$};
\end{tikzpicture}
 } \caption{The fiber above $0$ when $Q_0=R_1$ }\label{fig:twopointscorner}
\end{figure}
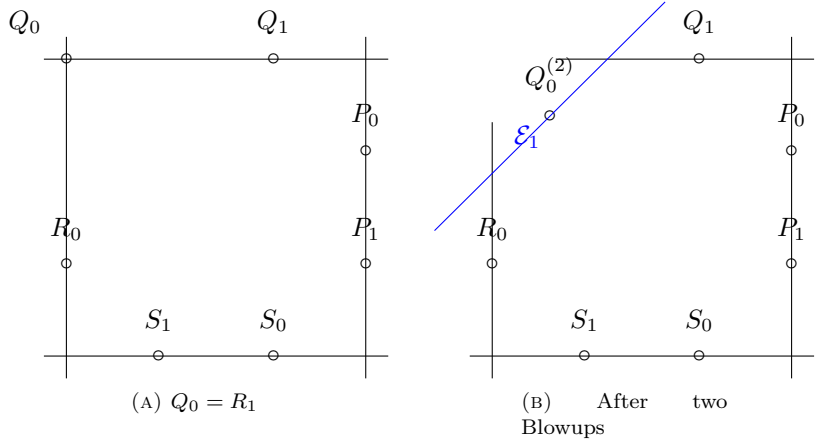

\vspace{1cm}
  
 \subparagraph{\emph{B.2~Two base points equal on a line}}
 Assume that for  instance $Q_0=Q_1=(a ,\infty)$ with $a \notin \{0, \infty\}$. We  perform a first blowup at $Q_0=R_0$ and we choose the affine chart of  $\bold{A}^2 \subset \P1 \times \P1$ given by $x_1=1, y_0=0$. The coordinate of this chart are $x:=x_0$ and $y:=y_1$.
 By assumption $S(x,y)=(x-a)^2  +A(x)y +B(x)y^2$. The  member $C_{[\lambda_a:\mu_a]}$  with $\mu_a a +\lambda_a A(a)=0$ of the pencil is singular at the point $(a,0)$.  The blowup at $Q_0$ is  the map $\pi_1: X \subset \bold{A}^2 \times \P1 (x,y)\times[u:v] \rightarrow  (x,y)$ where $X= \{(x,y)\times [u:v] |u(x-a)=vy \}$. In the chart $v=1$, the exceptional divisor $\mathcal{E}_1$ is given by $(x-a)=0$ and  a strict transform of a general member of the pencil $\mathfrak C$ is given $\mu ux- \lambda( (x-a) +A(x)u +B(x)u(x-a))$. This defines a new pencil $\mathfrak{D}$  whose member above zero is given by $ux=0$ that is by the proper transform of $C_{[0:1]}$. {\mfs All} members of the pencil $\mathfrak{D}$ meet on the point $Q_0^{(1)}$ given by  $u=0, x=a$ of the exceptional divisor $\mathcal{E}_1$. Thus one needs to blowup one more time at $Q_0^{(1)}$  to separate the members of the pencil $\mathfrak{D}$ and resolve the singularity of $\phi$ at $Q_0$. An easy computation shows that the exceptional divisor $\mathcal{E}_1$ is after the second blowup one of the components of  the fiber $F_{[\lambda_a:\mu_a]}$  with $\mu_a a +\lambda_a A(a)=0$. The last exceptional divisor $\mathcal{E}_2$ corresponds to the section $\mathcal{Q}_0$. It intersects $F_0$ at  some point $Q_0^{(2)}$ on the strict transform of $y_1=0$.

\begin{figure}[h!]
\centering
\subfloat[$Q_0=Q_1$]{\label{fig:in}
 \begin{tikzpicture}[scale=1.5]
\coordinate (O) at (0:0);
\foreach \i in {1,...,4}{
\coordinate (A\i) at (\i * 90 -4 -45:2);
\coordinate (B\i) at (\i * 90 +94 -45 :2);
 \tkzDefBarycentricPoint(A\i=1,B\i=2)
  \tkzGetPoint{I\i};
   \tkzDefBarycentricPoint(A\i=2,B\i=1)
  \tkzGetPoint{J\i};
}
\node[label=above: $Q_0$]  at (I1) {$\circ$};
   \node[label=above: $R_0$]  at (I2) {$\circ$};
    \node[label=above: $S_0$]  at (I3) {$\circ$};
     \node[label=above: $P_0$]  at (I4) {$\circ$};
     
   \node[label=above: $R_1$]  at (J2) {$\circ$};
    \node[label=above: $S_1$]  at (J3) {$\circ$};
     \node[label=above: $P_1$]  at (J4) {$\circ$};
\draw (A1) to[ swap] (B1);
\draw (A2) to[ swap] (B2);
\draw (A3) to[ swap] (B3);
\draw (A4) to[ swap] (B4);
\end{tikzpicture}}
\quad
\subfloat[After two Blowups]{\label{fig:in}
\begin{tikzpicture}[scale=1.5]
\coordinate (O) at (0:0);
\foreach \i in {1,...,4}{
\coordinate (A\i) at (\i * 90 -4 -45:2);
\coordinate (B\i) at (\i * 90 +94 -45 :2);
 \tkzDefBarycentricPoint(A\i=1,B\i=2)
  \tkzGetPoint{I\i};
   \tkzDefBarycentricPoint(A\i=2,B\i=1)
  \tkzGetPoint{J\i};
}
\node[label=above: $Q_0^{(2)}$]  at (I1) {$\circ$};
   \node[label=above: $R_0$]  at (I2) {$\circ$};
    \node[label=above: $S_0$]  at (I3) {$\circ$};
     \node[label=above: $P_0$]  at (I4) {$\circ$};
     
   \node[label=above: $R_1$]  at (J2) {$\circ$};
    \node[label=above: $S_1$]  at (J3) {$\circ$};
     \node[label=above: $P_1$]  at (J4) {$\circ$};
\draw (A1) to[ swap] (B1);
\draw (A2) to[ swap] (B2);
\draw (A3) to[ swap] (B3);
\draw (A4) to[ swap] (B4);
\end{tikzpicture}}

\caption{The fiber above zero when $Q_0=Q_1$ } \label{fig:q0equalQ1}
\end{figure}
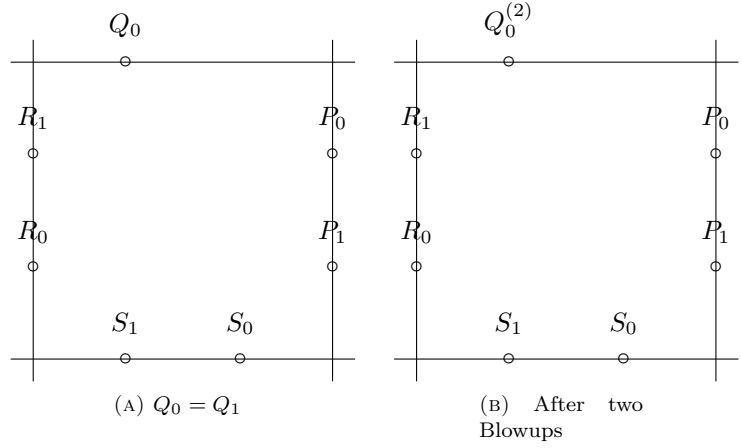

\vspace{1cm}
 \subparagraph{\emph{B.3~Three points  in a corner}}
 Assume that for instance $Q_0=R_0=Q_1$. In the coordinates  $x:=x_0$ and $y:=y_1$ of the affine chart  of  $\bold{A}^2 \subset \P1 \times \P1$ given by $x_1=1, y_0=0$, the polynomial $S(x,y)$ is of the form  $\alpha x^2 +A(x)y +B(x)y^2$ where $\alpha \neq 0$ because the general member of $\mathfrak{C}$ is non singular. In this chart, the blowup of $Q_0$ consists in considering the application $\pi_1: X \rightarrow \P1, (x,y)\times[u:v] \mapsto  (x,y)$ where $X= \{(x,y)\times [u:v] |ux=vy \}\subset \bold{A}^2 \times \P1$. In the chart $v=1$, the exceptional divisor $\mathcal{E}_1$ is given by $x=0$ and the strict transform of a general member of the pencil is given by 
 $$\mu ux +\lambda (\alpha x +A(x)u +B(x) u^2x^2).$$
 This allows to conclude that the member of $\mathfrak{D}$ above zero corresponds to $ux=0$ and is therefore the union of the  proper transform of $C_{[0:1]}$ and of the first exceptional divisor $\mathcal{E}_1$. Moreover all members of $\mathfrak{D}$ intersect at the point $Q_0^{(1)}$ given by $u=x=0$. Thus, one needs to perform a second blowup at the point $Q_0^{(1)}$. In the coordinates $u$ and $x$, this blowup is $\pi_2:  X \rightarrow \P1, (x,u)\times[c:d] \mapsto  (x,u)$ where $X= \{(x,u)\times [u:v] |uc=dx \} \subset \bold{A}^2 \times \P1$. In the chart $d=1$, the exceptional divisor $\mathcal{E}_2$ is given by $u=0$.  An easy computation shows that the total transform of a general member of $\mathfrak{D}$ is the zero set of 
 $$\mu cu+ \lambda(\alpha c +A(cu)+B(cu)uc).$$
 This defines a new pencil $\mathfrak{E}$ of curves.  The member above zero is given by $cu=0$ and is therefore the union of the proper transform of 
 $D_{[0:1]}$ and of the exceptional divisor $\mathcal{E}_2$. All the members of the pencil $\mathfrak{E}$ intersect on the point $Q_0^{(2)}$ given by $u=0, c=\frac{-A(0)}{\alpha}$. One needs to blowup once more at $Q_0^{(2)}$ to resolve the singularity of $\phi$ at $Q_0$. The fiber $F_0$ is thus the union of the strict transform of $C_{[0:1]}$, $\mathcal{E}_1$ and $\mathcal{E}_2$. The last exceptional divisor $\mathcal{E}_3$ corresponds to the section $\mathcal{Q}_0$ and intersects the fiber above zero   on $\mathcal{E}_2$. It intersects $F_0$ at 
$Q_0^{(3)}$ on $\mathcal{E}_2$. \\

\begin{figure}[h!]

\centering
\subfloat[$Q_0=R_1=Q_1$]{\label{fig:in}

\begin{tikzpicture}[scale=1.5]
\coordinate (O) at (0:0);
\foreach \i in {1,...,4}{
\coordinate (A\i) at (\i * 90 -4 -45:2);
\coordinate (B\i) at (\i * 90 +94 -45 :2);
 \tkzDefBarycentricPoint(A\i=1,B\i=2)
  \tkzGetPoint{I\i};
   \tkzDefBarycentricPoint(A\i=2,B\i=1)
  \tkzGetPoint{J\i};
}

   \node[label=above: $R_0$]  at (I2) {$\circ$};
    \node[label=above: $S_0$]  at (I3) {$\circ$};
     \node[label=above: $P_0$]  at (I4) {$\circ$};
     
    \node[label=above: $S_1$]  at (J3) {$\circ$};
     \node[label=above: $P_1$]  at (J4) {$\circ$};
\draw [name path=A1--B1] (A1) to[ swap] (B1);
\draw [name path=A2--B2](A2) to[ swap] (B2);
\draw (A3) to[ swap] (B3);
\draw (A4) to[ swap] (B4);
\path [name intersections={of=A1--B1 and A2--B2,by=E}];
\node  [label=above left: $Q_0$] at (E) {$\circ$};
\end{tikzpicture}
}
\quad
\subfloat[After one Blowup]{\label{fig:in}
\begin{tikzpicture}[scale=1.5]
\coordinate (O) at (0:0);
\foreach \i in {1,...,4}{
\coordinate (A\i) at (\i * 90 -4 -45:2);
\coordinate (B\i) at (\i * 90 +94 -45 :2);
 \tkzDefBarycentricPoint(A\i=1,B\i=2)
  \tkzGetPoint{I\i};
   \tkzDefBarycentricPoint(A\i=2,B\i=1)
  \tkzGetPoint{J\i};
}

   \node[label=above: $R_0$]  at (I2) {$\circ$};
    \node[label=above: $S_0$]  at (I3) {$\circ$};
     \node[label=above: $P_0$]  at (I4) {$\circ$};
     
    \node[label=above: $S_1$]  at (J3) {$\circ$};
    \node[label=above: $P_1$]  at (J4) {$\circ$};

\draw (A3) to[ swap] (B3);
\draw (A4) to[ swap] (B4);
\tkzDefBarycentricPoint(A2=3,B2=1)
  \tkzGetPoint{C2};
\tkzDefBarycentricPoint(A1=1,B1=3)
  \tkzGetPoint{C1};
  \tkzDefBarycentricPoint(A2=3,B2=2)
  \tkzGetPoint{C22};
\tkzDefBarycentricPoint(A1=2,B1=3)
  \tkzGetPoint{C11};
\tkzDefBarycentricPoint(C22=-3,C11=1)
  \tkzGetPoint{C3};  
\tkzDefBarycentricPoint(C11=-3,C22=1)
  \tkzGetPoint{C4};  
 \draw [name path=A1--C1] (A1) to[ swap] (C1);
\draw (B2) to [swap] (C2);
\draw [blue, label=$E_1$, name path=C3--C4] (C3) --  (C4) node[pos=0.5, below left ] {$\mathcal{E}_1$};
\path [name intersections={of=A1--C1 and C3--C4,by=F}];
\node  [label=above left: $Q_0^{(1)}$] at (F) {$\circ$};
\end{tikzpicture}
 }
 \quad
\subfloat[After three Blowups]{\label{fig:in}
\begin{tikzpicture}[scale=1.5]
\coordinate (O) at (0:0);
\foreach \i in {1,...,4}{
\coordinate (A\i) at (\i * 90 -4 -45:2);
\coordinate (B\i) at (\i * 90 +94 -45 :2);
 \tkzDefBarycentricPoint(A\i=1,B\i=2)
  \tkzGetPoint{I\i};
   \tkzDefBarycentricPoint(A\i=2,B\i=1)
  \tkzGetPoint{J\i};
}

   \node[label=above: $R_0$]  at (I2) {$\circ$};
    \node[label=above: $S_0$]  at (I3) {$\circ$};
     \node[label=above: $P_0$]  at (I4) {$\circ$};
     
    \node[label=above: $S_1$]  at (J3) {$\circ$};
    \node[label=above: $P_1$]  at (J4) {$\circ$};

\draw (A3) to[ swap] (B3);
\draw (A4) to[ swap] (B4);
\tkzDefBarycentricPoint(A2=3,B2=1)
  \tkzGetPoint{C2};
\tkzDefBarycentricPoint(A1=1,B1=3)
  \tkzGetPoint{C1};
  \tkzDefBarycentricPoint(A2=3,B2=2)
  \tkzGetPoint{C22};
\tkzDefBarycentricPoint(A1=2,B1=3)
  \tkzGetPoint{C11};
\tkzDefBarycentricPoint(C22=-3,C11=1)
  \tkzGetPoint{C3};  
\tkzDefBarycentricPoint(C11=-3,C22=1)
  \tkzGetPoint{C4};  
  \tkzDefBarycentricPoint(C3=1,C1=4)
  \tkzGetPoint{C5};  
    \tkzDefBarycentricPoint(C5=-6,C4=1)
  \tkzGetPoint{C6};  
 \tkzDefBarycentricPoint(C6=1,C4=3)
  \tkzGetPoint{C7};
  \tkzDefBarycentricPoint(C7=-3,A1=1)
  \tkzGetPoint{C8};
   \tkzDefBarycentricPoint(C6=1,C4=2)
  \tkzGetPoint{H};
\draw (B2) to [swap] (C2);
\draw [blue, label=$E_1$, name path=C3--C4] (C3) --  (C1) node[pos=0.5, below left ] {$\mathcal{E}_1$};
\draw [red, label=$E_2$] (C6)--(C4) node[pos=0.5, below  ] {$\mathcal{E}_2$};;
\draw (C8)--(A1);
\node[label=above: $Q_0^{(3)}$]  at (H) {$\circ$};
\end{tikzpicture}} \caption{The fiber above $0$ when $Q_0=Q_1=R_1$}\label{fig:threepointscorner}
\end{figure}
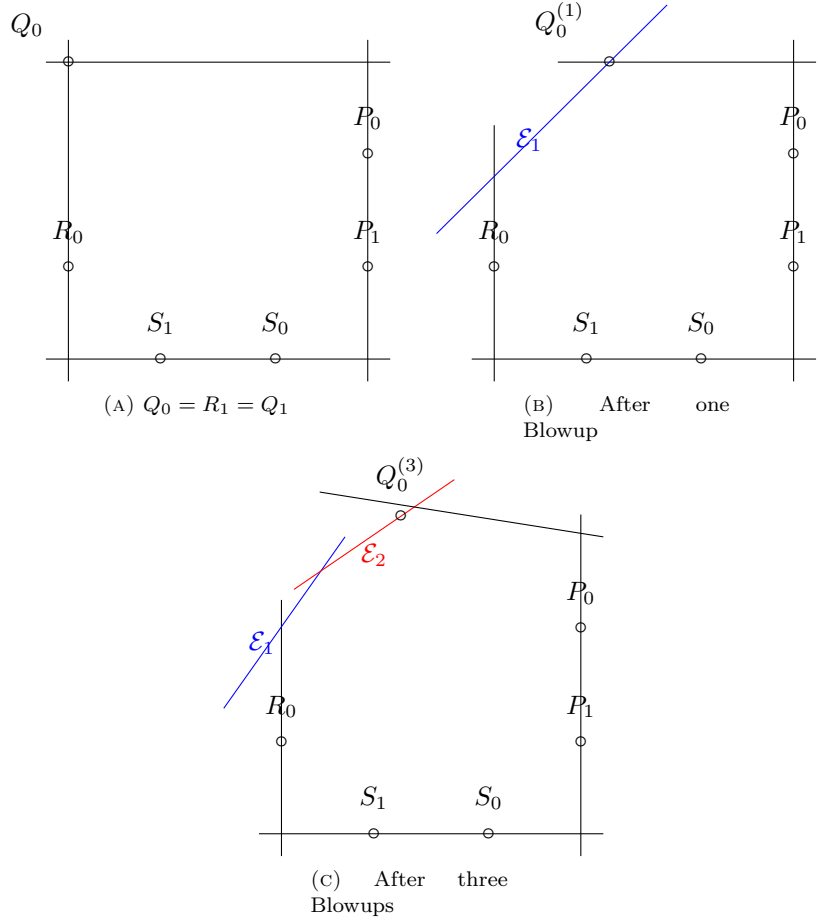

Since the curve $\Etproj$ is non singular, one can not have four points in a corner. The discussion above shows that the singular fiber above $0$ is an $I_n$ with 
\begin{itemize}
\item $n=4$ when all the base points are distinct or they are equal on a line,
\item $n=5$ when  for instance $Q_0=Q_1$,
\item $n=6$ when   for instance $Q_0=Q_1=R_1$,
\item $n=7$ when  for instance $Q_0=Q_1=R_1$ and $P_1=S_0$,
\item $n=8$ when for instance  $Q_0=Q_1=R_1$ and $P_1=S_0=P_0$,
\item $n=9$ when for instance $Q_0=Q_1=R_1$,  $P_1=S_0=P_0$ and $R_0=S_1$.
\end{itemize}
In this last case, one has $\tau^3(S_0)=S_0$ so that the group of the walk is finite. Indeed, the group of the walk will be always finite when $n=9$ because the root lattice is $A_8$ (see \cite[Table 8.2]{SchuttShiodaBook}).

 \subsubsection{Some examples}\label{sec:exs}
The fiber $F_0$ above zero is an $I_{n}$ and the contribution of this fiber to the height of a section $\mathcal{Q}$ is defined as follows.  Let $\mathcal{O}$ be the zero section. The fiber $F_0$ is a cycle of $n$ components $\Theta_i$ for $i=0,\dots,n-1$. The component of $F_0$ that meets the section $\mathcal{O}$ is denoted $\Theta_0$ and we number the components cyclically, that is, $\Theta_i$ meets $\Theta_j$ if and only if $|i-j| \equiv 1 \mbox{ mod } n$. The contribution of $F_0$ to the height of  a section $\mathcal{P}$  is equal to $\frac{i(n-i)}{n}$ when $\mathcal{P}$ meets $F_0$ on the component $\Theta_i$. With the process detailed in \ref{subsubsec:fiberzero}, one can easily determine the contribution of $F_0$ to the height of the section. This allows one to refine the algorithm presented in Section \ref{sec:algo} by lowering the number of possibilities for the height. In this section, we present this refinement via the study of \charl{three} weighted models.\\

\noindent{\emph{{\mfs Example~\ref{exa:algo} revisited}: The weighted model  $w_{IIC.2}$.}}\label{sec:exampleIIB6}\\
 In this paragraph, we show how the computation of the contribution of the fiber above zero allows to drastically simplify the algorithm  presented in \ref{sec:algo}. We will illustrate this on an example and we will   study the $D$-algebraicity of  the weighted model $w_{IIC.2}$, which corresponds to $d_{1,-1}=d_{1,0}=d_{-1,1}=0$. For this model, we have 

\begin{itemize}
\item $Q_1=R_1 \neq Q_0$,
\item $P_0=P_1=S_1$. 
\end{itemize}
Following the method detailed in Section \ref{subsubsec:fiberzero}, the fiber above zero  given by Figure \ref{fig:exampleweightedIIC2}.  

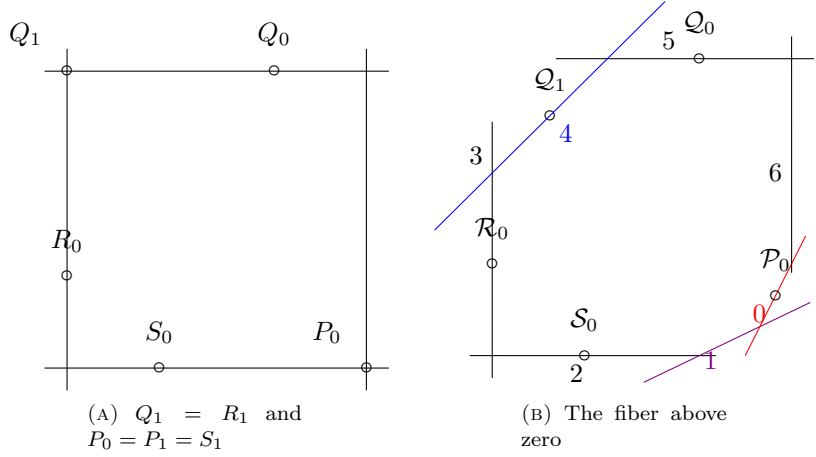
\begin{figure}[h!]

\centering
\subfloat[$Q_1=R_1$ and $P_0=P_1=S_1$]{\label{fig:in}

\begin{tikzpicture}[scale=1.5]
\coordinate (O) at (0:0);
\foreach \i in {1,...,4}{
\coordinate (A\i) at (\i * 90 -4 -45:2);
\coordinate (B\i) at (\i * 90 +94 -45 :2);
 \tkzDefBarycentricPoint(A\i=1,B\i=2)
  \tkzGetPoint{I\i};
   \tkzDefBarycentricPoint(A\i=2,B\i=1)
  \tkzGetPoint{J\i};
}

   \node[label=above: $R_0$]  at (I2) {$\circ$};
     
      \node[label=above: $Q_0$]  at (J1) {$\circ$};
    \node[label=above: $S_0$]  at (J3) {$\circ$};
\draw [name path=A1--B1] (A1) to[ swap] (B1);
\draw [name path=A2--B2](A2) to[ swap] (B2);
\draw[name path=A3--B3] (A3) to[ swap] (B3);
\draw[name path=A4--B4] (A4) to[ swap] (B4);
\path [name intersections={of=A1--B1 and A2--B2,by=E}];
\node  [label=above left: $Q_1$] at (E) {$\circ$};

\path [name intersections={of=A3--B3 and A4--B4,by=F}];
\node  [label=above left: $P_0$] at (F) {$\circ$};
\end{tikzpicture}
}
\quad
\subfloat[The fiber above zero]{\label{fig:in}
\begin{tikzpicture}[scale=1.5]
\coordinate (O) at (0:0);
\foreach \i in {1,...,4}{
\coordinate (A\i) at (\i * 90 -4 -45:2);
\coordinate (B\i) at (\i * 90 +94 -45 :2);
 \tkzDefBarycentricPoint(A\i=1,B\i=2)
  \tkzGetPoint{I\i};
   \tkzDefBarycentricPoint(A\i=2,B\i=1)
  \tkzGetPoint{J\i};
}

   \node[label=above: $\mathcal{R}_0$]  at (I2) {$\circ$};
     
      \node[label=above: $\mathcal{Q}_0$]  at (J1) {$\circ$};
    \node[label=above: $\mathcal{S}_0$]  at (J3) {$\circ$};

\tkzDefBarycentricPoint(A2=3,B2=1)
  \tkzGetPoint{C2};
\tkzDefBarycentricPoint(A1=1,B1=3)
  \tkzGetPoint{C1};
  \tkzDefBarycentricPoint(A2=3,B2=2)
  \tkzGetPoint{C22};
\tkzDefBarycentricPoint(A1=2,B1=3)
  \tkzGetPoint{C11};
\tkzDefBarycentricPoint(C22=-3,C11=1)
  \tkzGetPoint{C3};  
\tkzDefBarycentricPoint(C11=-3,C22=1)
  \tkzGetPoint{C4};  
  
  \tkzDefBarycentricPoint(C3=1,C4=1)
  \tkzGetPoint{Q0};
  \node[label=above: $\mathcal{Q}_1$]  at (Q0) {$\circ$};

 \tkzDefBarycentricPoint(A3=1,B3=2)
  \tkzGetPoint{R};
\tkzDefBarycentricPoint(B3=5,B4=1)
  \tkzGetPoint{R1};
  \tkzDefBarycentricPoint(R=-3,R1=1)
  \tkzGetPoint{R2};
   \tkzDefBarycentricPoint(A3=1,B3=4)
  \tkzGetPoint{S};
  \tkzDefBarycentricPoint(B3=5,B4=3)
  \tkzGetPoint{S1};
    \tkzDefBarycentricPoint(A3=2,B3=5)
  \tkzGetPoint{S3};
   \tkzDefBarycentricPoint(A4=9,B4=4)
  \tkzGetPoint{S2};
   \tkzDefBarycentricPoint(S=1,S1=1)
  \tkzGetPoint{S4};

\draw (A1)--(C1) node[pos=0.5, above  left ]{$5$};
\draw (B2)--(C2)node[pos=0.8, above left ]{$3$};
\draw [blue] (C3) --(C4) node[pos=0.5, below  right ]{$4$};
  \draw (A3)-- (S3)node[pos=0.5, below  left ]{$2$};
\draw[violet](R2)-- (R1)node[pos=0.5, below  left ]{$1$};
\draw[red](S) --(S1) node[pos=0.5, below  left ]{$0$};
\draw (B4)--(S2)node[pos=0.5, below  left ]{$6$};
 \node[label=above: $\mathcal{P}_0$]  at (S4) {$\circ$};
\end{tikzpicture}
 }\caption{Fiber above zero for  $w_{IIC.2}$}\label{fig:exampleweightedIIC2}
\end{figure}

In Figure \ref{fig:exampleweightedIIC2}, we abuse notation and denote by $\mathcal{Q}_i, \mathcal{P}_i$ the intersections of the sections
with the fiber $F_0$.

As detailed in Section \ref{sec:algo}, the model is decoupled if and only if there exists $n$ such that 
$Q_0=\tau^n(P_0)$ (Note that since $P_0$ is fixed by $\ita$, one has $P_0 \sim Q_0$ if and only if $P_0 \sim Q_1$. Choosing $P_0$ as the zero of $\Etproj$, we must decide if there exists an integer $n$ such that 
$Q_0=n\tau(P_0)=nS_0$. The fiber above zero is an $I_7$, which corresponds to a root lattice $A_6$. By \cite[Table 8.2]{SchuttShiodaBook}, the root lattice $T$ is either $A_6$ or $A_6 \oplus A_1$.   Numbering the components of the fiber above zero as in Figure \ref{fig:exampleweightedIIC2},   we find that   the height of the points $Q_0$ and $S_0$ are given by
\begin{itemize}
\item $\hhat(Q_0)=2 -\frac{5(7-5)}{7} -\frac{\epsilon_1}{2}$,
\item $\hhat(S_0)=2 -\frac{2(7-2)}{7}  -\frac{\epsilon_2}{2}$,
\end{itemize}
where $\epsilon_1,\epsilon_2 \in \{0,1\}$ depending on the intersection of $\mathcal{Q}_0$ and $\mathcal{S}_0$ with a putative singular fiber of  root lattice $A_1$. Note that the height of $S_0$ is never zero so that the point $\tau(P_0)$ is not torsion and the group of the walk is infinite (see the remarks following Lemma~\ref{lem:inforder} and Remark \ref{rmk:finiteheight}). Then, $\hhat(Q_0)=n^2\hhat(S_0)$ is equivalent to $8 -7 \epsilon_1=n^2(8 -7 \epsilon_1)$ and the only solution is $n^2=1$ that is $n=\pm 1$. Since $\tau(P_0)= S_0 \neq Q_0$, the integer  $n$ must be equal to $-1$. For the weighted model $w_{IIC.2}$, the condition $Q_0=\tau^{-1}(P_0)$ is equivalent to  
\begin{equation}\label{eq:decoupledwIIC2}
d_{0,1}d_{0,-1} -d_{1,1}d_{-1,-1}=0.
\end{equation}  When the model $w_{IIC.2}$ is unweighted, the condition \eqref{eq:decoupledwIIC2} is satisfied so that the unweighted $w_{IIC.2}$ is $D$-algebraic.

Once one knows that the weighted model is decoupled, it is quite easy to find the certificate for $b$. Indeed, thanks to the orbit residue criteria, one knows  the distribution of the poles of $b$ on $\tau$-orbits. Finding the certificate of $b$ is just a question of finding an elliptic function with prescribed set of poles and residues. 

The weighted model $w_{IIC.2}$ is decoupled if and only if $d_{0,1}d_{0,-1} -d_{1,1}d_{-1,-1}=0$  if and only if $Q_0=\ita(S_0)$. In that situation, the residues and poles of $b$ are as follows:
$$
\begin{array}{|c|c|c|c|}
\hline
\mbox{Points} &  S_0=\tau( P_0 )&P_{0} & \tau^{-1}(P_0)=Q_0   \\ \hline
\mbox{Residues of order 1}& \a &  -2\a & \a  \\ \hline

\end{array}
$$
In $\C(\Etproj)$, the function $h=\frac{1}{y}$ has the following residues and poles
$$
\begin{array}{|c|c|c|}
\hline
\mbox{Points} &  S_0=\tau( P_0 )&P_{0}   \\ \hline
\mbox{Residues of order 1}& -\b &  \b   \\ \hline

\end{array}
$$
 so that for any $\lambda \in \C^*$, the function $\tau(\lambda h) -\lambda h$ has 
  the following residues and poles
 $$
\begin{array}{|c|c|c|c|}
\hline
\mbox{Points} &  S_0=\tau( P_0 )&P_{0} & \tau^{-1}(P_0)=Q_0   \\ \hline
\mbox{Residues of order 1}& \lambda \b &  -2\lambda\b& \lambda \b  \\ \hline

\end{array}
$$ 
Then  $\tau(\frac{\a}{\b y} )-\frac{\a}{\b y}  $ and $b$ have same poles and residues so that  there exists $c \in \C$ such that $b= \tau(\frac{\a}{\b y} ) -(\frac{\a}{\b y} ) +c$. It is easily seen that $c$ must be zero since $\ita(b)=-b$ and 
$\ita\left( \tau(\frac{\a}{\b y} ) -(\frac{\a}{\b y} )   \right) = -(\tau(\frac{\a}{\b y} ) -(\frac{\a}{\b y} ))$.  Therefore, the function $\frac{\a}{\b y}$ is a certificate for $b$. To compute the residues $\a$ and $\b$, we generalize  \cite{BBMR15} to the decoupled  weighted case and, using \eqref{eq:decoupledwIIC2}, we  note that  
\begin{equation}\label{eq:yitay}
y \ita(y) =\frac{(d_{-1,-1} +d_{0,-1}x)}{ d_{0,1} x + d_{1,1}x^2}=\frac{d_{-1,-1}}{d_{0,1}} \frac{1}{x}. 
\end{equation}
Then, one finds that 
\begin{align*}
\a=\Res_{Q_0}(b)=& -\Res_{Q_0}(xy)=-\frac{d_{-1,-1}}{d_{0,1}}\Res_{Q_0}(\frac{1}{\ita(y)})\\
=&\frac{d_{-1,-1}}{d_{0,1}}\Res_{\ita(Q_0)}(\frac{1}{y})=- \frac{d_{-1,-1}}{d_{0,1}}\b,
\end{align*}
where we use $\Res_{\ita(P)}(f)=-\Res_P(\ita(f))$ for any $P \in \Etproj, f \in \C(\Etproj)$ and $\ita(Q_0)=S_0$. This proves that the function $\frac{-d_{0,1}}{d_{-1,-1}y}$ is a certificate for $b$. \\
\begin{exa}{\emph{The weighted model $IB.6$.}} 
 This weighted model  corresponds to $d_{1,-1}=d_{1,0}=0$. When unweighted, it was called $IB.6$ and we keep this terminology for the weighted model.  In that situation,  $P_0=P_1=S_1$. The fiber above zero is 

\begin{figure}[h!]

\centering
\subfloat[ $P_0=P_1=S_1$]{\label{fig:in}

\begin{tikzpicture}[scale=1.5]
\coordinate (O) at (0:0);
\foreach \i in {1,...,4}{
\coordinate (A\i) at (\i * 90 -4 -45:2);
\coordinate (B\i) at (\i * 90 +94 -45 :2);
 \tkzDefBarycentricPoint(A\i=1,B\i=2)
  \tkzGetPoint{I\i};
   \tkzDefBarycentricPoint(A\i=2,B\i=1)
  \tkzGetPoint{J\i};
}

  \node[label=above: $Q_1$]  at (I1) {$\circ$};
   \node[label=above: $R_0$]  at (I2) {$\circ$};
     \node[label=above: $R_1$]  at (J2) {$\circ$};
      \node[label=above: $Q_0$]  at (J1) {$\circ$};
    \node[label=above: $S_0$]  at (J3) {$\circ$};
\draw [name path=A1--B1] (A1) to[ swap] (B1);
\draw [name path=A2--B2](A2) to[ swap] (B2);
\draw[name path=A3--B3] (A3) to[ swap] (B3);
\draw[name path=A4--B4] (A4) to[ swap] (B4);

\path [name intersections={of=A3--B3 and A4--B4,by=F}];
\node  [label=above left: $P_0$] at (F) {$\circ$};
\end{tikzpicture}
}
\quad
\subfloat[The fiber above zero]{\label{fig:in}
\begin{tikzpicture}[scale=1.5]
\coordinate (O) at (0:0);
\foreach \i in {1,...,4}{
\coordinate (A\i) at (\i * 90 -4 -45:2);
\coordinate (B\i) at (\i * 90 +94 -45 :2);
 \tkzDefBarycentricPoint(A\i=1,B\i=2)
  \tkzGetPoint{I\i};
   \tkzDefBarycentricPoint(A\i=2,B\i=1)
  \tkzGetPoint{J\i};
}
 \node[label=above: $\mathcal{R}_1$]  at (J2) {$\circ$};
 
   \node[label=above: $\mathcal{R}_0$]  at (I2) {$\circ$};
     
      \node[label=above: $\mathcal{Q}_0$]  at (J1) {$\circ$};
  \node[label=above: $\mathcal{Q}_1$]  at (I1) {$\circ$};
    \node[label=above: $\mathcal{S}_0$]  at (J3) {$\circ$};

\tkzDefBarycentricPoint(A2=3,B2=1)
  \tkzGetPoint{C2};
\tkzDefBarycentricPoint(A1=1,B1=3)
  \tkzGetPoint{C1};
  \tkzDefBarycentricPoint(A2=3,B2=2)
  \tkzGetPoint{C22};
\tkzDefBarycentricPoint(A1=2,B1=3)
  \tkzGetPoint{C11};
\tkzDefBarycentricPoint(C22=-3,C11=1)
  \tkzGetPoint{C3};  
\tkzDefBarycentricPoint(C11=-3,C22=1)
  \tkzGetPoint{C4};  
  
  \tkzDefBarycentricPoint(C3=1,C4=1)
  \tkzGetPoint{Q0};

 \tkzDefBarycentricPoint(A3=1,B3=2)
  \tkzGetPoint{R};
\tkzDefBarycentricPoint(B3=5,B4=1)
  \tkzGetPoint{R1};
  \tkzDefBarycentricPoint(R=-3,R1=1)
  \tkzGetPoint{R2};
   \tkzDefBarycentricPoint(A3=1,B3=4)
  \tkzGetPoint{S};
  \tkzDefBarycentricPoint(B3=5,B4=3)
  \tkzGetPoint{S1};
    \tkzDefBarycentricPoint(A3=2,B3=5)
  \tkzGetPoint{S3};
   \tkzDefBarycentricPoint(A4=9,B4=4)
  \tkzGetPoint{S2};
   \tkzDefBarycentricPoint(S=1,S1=1)
  \tkzGetPoint{S4};

\draw (A1)--(B1) node[pos=0.5, above  left ]{$4$};
\draw (B2)--(A2)node[pos=0.8, above left ]{$3$};
  \draw (A3)-- (S3)node[pos=0.5, below  left ]{$2$};
\draw[violet](R2)-- (R1)node[pos=0.5, below  left ]{$1$};
\draw[red](S) --(S1) node[pos=0.5, below  left ]{$0$};
\draw (B4)--(S2)node[pos=0.5, below  left ]{$5$};
 \node[label=above: $\mathcal{P}_0$]  at (S4) {$\circ$};
\end{tikzpicture}
 }\caption{Fiber above zero for  $w_{IB.6}$}\label{fig:exampleweightedIB6}
\end{figure}
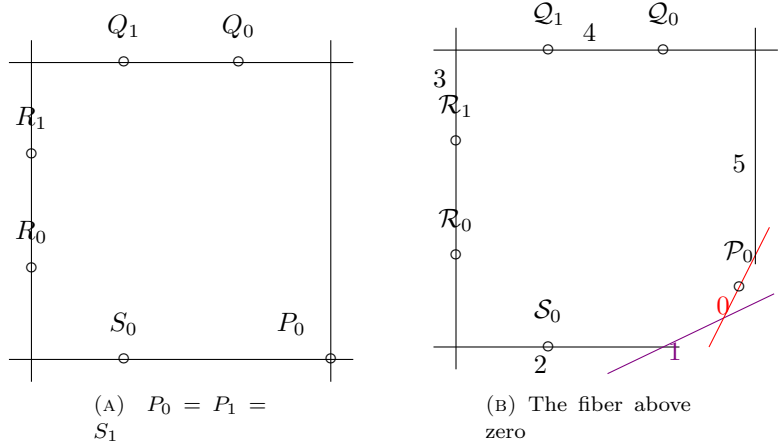


As described in Section \ref{sec:algo}, the model is decoupled if and only if there exists $n$ such that 
$Q_0=\tau^n(P_0)$ (Note that since $P_0$ is fixed by $\ita$, one has $P_0 \sim Q_0$ if and only if $P_0 \sim Q_1$. Choosing $P_0$ as the zero of $\Etproj$, we must decide if there exists an integer $n$ such that 
$Q_0=n\tau(P_0)=nS_0$. The fiber above zero is an $I_6$, which corresponds to a root lattice $A_5$. By \cite[Table 8.2]{SchuttShiodaBook}, the root lattice $T$ is either $A_5,A_5 \oplus A_1, A_5 \oplus A_1^2, A_5 \oplus A_2,A_5 \oplus A_2 \oplus A_1 $.   Numbering the components of the fiber above zero as in Figure \ref{fig:exampleweightedIB6}, we find that   the heights of the points $Q_0$ and $S_0$ are given by
\begin{itemize}
\item $\hhat(Q_0)=2 -\frac{4(6-4)}{6} -\frac{\epsilon_1}{2}-\frac{2\epsilon_2}{3}$,
\item $\hhat(S_0)=2 -\frac{2(6-2)}{6}  -\frac{\eta_1}{2}-\frac{2\eta_2}{3} $,
\end{itemize}
where $\epsilon_i, \eta_i \in \{0,1\}$ except for the root lattice $A_5 \oplus A_1^2$, where  the height of the points $Q_0$ and $S_0$ are given by
\begin{itemize}
\item $\hhat(Q_0)=2 -\frac{4(6-4)}{6} -\frac{\epsilon_1}{2}-\frac{\epsilon_2}{2}$,
\item $\hhat(S_0)=2 -\frac{2(6-2)}{6}  -\frac{\eta_1}{2}-\frac{\eta_2}{2} $,
\end{itemize}
where $\epsilon_i, \eta_i \in \{0,1\}$. {Note that $\hhat(S_0)$ might be equal to zero if $\eta_1=0,\eta_2=1$. In that case, the group of the walk is finite and the generating series are holonomic. If $\hhat(S_0) \neq 0$ then,}
it is easily seen that if  $\hhat(Q_0)=n^2\hhat(S_0)$  then $n^2$ equals $1$ or  $4$. Since $Q_0 \neq S_0$ and $\itb(Q_0)=Q_1)\neq \ita(S_0)=S_1$, it is easily seen that $n$ must be equal to $-1$ or $-2$.  A simple computation (see \cite{HSurl}) shows that 
$Q_0=\tau^{-1}(P_0)$ if and only if 
\begin{equation}\label{eq:decoupledIB6}
d_{-1,1}d_{0,-1}^2 -d_{0,1}d_{-1,-1}d_{0,-1} +d_{1,1}d_{-1,-1}^2=0.
\end{equation}

The condition $Q_0=\tau^{-2}(P_0)$ is impossible  (see \cite{HSurl}). 
Nonetheless, it is easily seen that if the walk is unweighted then the condition \eqref{eq:decoupledIB6} {does not hold}. Therefore, the unweighted model $IB.6$ has a $D$-transcendental generating series. \end{exa} 

\begin{rem} In \cite[Proposition 5.1]{DHRS19}, the authors show that if  $\delta^x=d_{1,0}^2-4d_{1,-1}d_{1,1}$  or $\delta_y=d_{0,1}^2-4d_{-1,1}d_{1,1}$ is not a square in $\Q(d_{i,j})$ then the  generating series are differentially  hypertranscendental. For the unweighted model $IB.6$, one has
 $\delta^x=0$ and $\delta_y=-3$ so that the  generating series is differentially transcendental in that case. \cite[Theorem 35]{DHRS19} shows that \cite[Proposition 5.1]{DHRS19} remains valid in the weighted case. If condition \ref{eq:decoupledIB6} is satisfied then $\delta_x=0$ and 
 $\delta_y=  \left(\frac{(d_{0,1}d_{0,-1} -2d_{1,1}d_{-1,-1})}{d_{0,-1}}\right)^2$  is a square in $\Q(d_{i,j})$. Thus, our computation  gives a necessary and sufficient condition  for the $D$-algebraicity weighted model $IB.6$ and generalize  \cite[Theorem 35]{DHRS19} for this model.
 \end{rem}

\vspace{.1in}

\begin{exa}{\emph{The weighted Gouyou-Beauchamps model}}\label{sec:gouyoubeauchamp}

{In \cite{CourtielMelczerMishnaRaschel}, the authors adapt some probabilistic notions such as the drift to define subfamilies of weighted models, which they call \emph{universality classes} since they met common algebraic behaviour. They consider the \emph{generic central weighting of the Gouyou-Beauchamps model} given by Figure~\ref{fig:gouyoubeauchamp}.}

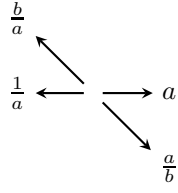
\begin{figure}

{\begin{tikzpicture}
\node (A) at (-1,1) {$\frac{b}{a}$} ;
\node (B) at (0,0) {};
\node (C) at (-1,0) {$\frac{1}{a}$}; 
\node (D) at (1,0) {$a$};
\node (E) at (1,-1) {$\frac{a}{b}$};
\draw[thick,->] (B) -- (A) ;
\draw[thick,->] (B) -- (C);
\draw[thick,->] (B) -- (D);
\draw[thick,->] (B) -- (E) ;

\end{tikzpicture}}
\caption{Generic central weighting of the Gouyou-Beauchamps model}\label{fig:gouyoubeauchamp}
\end{figure}

In \cite{CourtielMelczerMishnaRaschel}, the authors showed that the group of the models of Figure \ref{fig:gouyoubeauchamp}  was the dihedral group $D_8$ and they study the asymptotics of the combinatorial sequence. In this section, we weight the Gouyou-Beauchamps model with arbitrary weights $d_{-1,1},d_{1,-1},d_{0,1},d_{1,0}$ and prove the following proposition. 

\begin{prop}
The generating function of the weighted Gouyou-Beauchamps model is differentially algebraic if and only if 
\begin{equation}\label{eq:gouyoubeauchamp}
d_{1,0}d_{-1,0}- d_{-1,1}d_{1,-1}=0.
\end{equation} 
If \eqref{eq:gouyoubeauchamp} is satisfied, the group of the walk is either   $D_4$ or  $D_8$ and the generating function is $D$-finite.
\end{prop}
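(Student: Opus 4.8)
The plan is to run the machinery of Sections~\ref{sec:rescrit}--\ref{sec:refine} for the step set $\{(-1,1),(-1,0),(1,0),(1,-1)\}$, i.e.\ the steps NW, W, E, SE of Figure~\ref{fig:gouyoubeauchamp} with weights $d_{-1,1},d_{-1,0},d_{1,0},d_{1,-1}$. This step set appears in the list~\eqref{stepgenus1}, so, the kernel being assumed irreducible throughout, the kernel curve $\Etproj$ has genus one. Here $d_{1,1}=d_{-1,-1}=d_{0,1}=d_{0,-1}=d_{0,0}=0$; since $d_{-1,-1}=0$, the functional equation~\eqref{fnceqn} is $K\,Q=xy-F^1-F^2$, so $Q$ is differentially algebraic if and only if $F^1(x,t)$ and $F^2(y,t)$ are. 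By the criterion of Section~\ref{sec:certificates}, for infinite group this holds if and only if $b=x(\ita(y)-y)$ has a certificate in $\CX(\Etproj)$.

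First I would locate the candidate poles of $b$. Homogenizing $K$, the points of $\Etproj$ above $y=\infty$ are the zeros of $d_{-1,1}x_1^2+d_{0,1}x_0x_1+d_{1,1}x_0^2=d_{-1,1}x_1^2$, which coincide at $(\infty,\infty)$; thus $Q_0=Q_1=(\infty,\infty)$, and $(\infty,\infty)$ is a ramification point of the projection to $y$, hence is fixed by $\itb$. The points above $x=\infty$ are the zeros of $d_{1,-1}y_1^2+d_{1,0}y_0y_1+d_{1,1}y_0^2=y_1(d_{1,0}y_0+d_{1,-1}y_1)$, so $P_0=(\infty,\infty)$ and $P_1=(\infty,[-d_{1,-1}:d_{1,0}])$. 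In particular the point $P_0=Q_0=Q_1=(\infty,\infty)$ is simultaneously one of the $P_i$'s and one of the $Q_j$'s and is fixed by the involution $\itb$, so whenever the group of the walk is infinite Proposition~\ref{prop:fixedbytwoinvolutions} applies and $b$ has no certificate; then $F^1,F^2$ are neither $x$- nor $y$-$D$-algebraic, and by \cite{dreyfus2019length} $Q$ is moreover $t$-$D$-transcendental. Hence for infinite group $Q$ is not differentially algebraic.

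It remains to treat the finite-group case and to show it is exactly the locus~\eqref{eq:gouyoubeauchamp}. Here I would invoke the Kodaira--N\'eron model $\Scal$ of $\Etproj$ (Proposition~\ref{prop:ellipticsurfaceobtainedbybowingup}) and Remark~\ref{rmk:finiteheight}: the group of the walk is finite if and only if $\tau(O)$ is a torsion point, equivalently $\hhat(\tau(O))=0$, and the order of any torsion point is bounded by the root lattice of the singular fibres. Computing the Weierstrass form of $\Etproj$ and the fibre $F_0$ above $0$ as in Section~\ref{subsubsec:fiberzero} bounds this order --- hence the order of $\tau$ --- by a small integer; one then evaluates $\tau^n$ on a generic point of $\Etproj$ for the few relevant $n$ (using the formulas for $\ita,\itb$ and the computations of \cite{HSurl}), and the resulting polynomial conditions on the $d_{i,j}$ show that $\tau$ has finite order if and only if~\eqref{eq:gouyoubeauchamp} holds, with the order then equal to $2$ or $4$, so that the group of the walk is $D_4$ or $D_8$. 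On this locus \cite[Theorem 42]{DreyRasc} gives that $Q$ is $D$-finite, in particular differentially algebraic, which completes the equivalence: \eqref{eq:gouyoubeauchamp} holds $\iff$ the group is finite $\iff$ $Q$ is differentially algebraic.

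I expect the main obstacle to be this last step: pinning down the torsion of $\tau(O)$ and verifying that the whole finite-group locus of the weighted family is the single hypersurface~\eqref{eq:gouyoubeauchamp} --- not merely that it contains the central weighting of \cite{CourtielMelczerMishnaRaschel} and the unweighted model --- as well as ruling out torsion of order $3$. By contrast, the infinite-group case is immediate from Proposition~\ref{prop:fixedbytwoinvolutions} once one records the coincidence $P_0=Q_0=Q_1=(\infty,\infty)$ and its invariance under $\itb$, and no height computation is needed there.
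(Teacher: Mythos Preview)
Your treatment of the infinite-group case is correct and coincides with the paper's: once you observe that $Q_0=Q_1=P_0=(\infty,\infty)$ is fixed by $\itb$, Proposition~\ref{prop:fixedbytwoinvolutions} (Case~b, the $\itb$-symmetric version of Case~a.1) applies and $b$ has no certificate.

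Where you diverge from the paper is in identifying the finite-group locus and proving it equals~\eqref{eq:gouyoubeauchamp}. You propose to bound the torsion order via Remark~\ref{rmk:finiteheight} and then test $\tau^n=\mathrm{id}$ for $n\le 6$ directly, extracting polynomial conditions. This would work, but it is exactly the computation you yourself flag as the obstacle, and you do not carry it out (the \textsc{Maple} files in~\cite{HSurl} cover the discriminant calculation for this model, not iterated $\tau^n$). The paper bypasses this entirely. From the base-point configuration $P_0=Q_0=Q_1$ and $S_0=S_1=R_0$ (two ``three points in a corner'') it reads off that $F_0$ is of type $I_8$, so $A_7\subset T$, and by \cite[Table~8.2]{SchuttShiodaBook} the only possibilities are $T=A_7$ or $T=A_7\oplus A_1$. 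It then computes the discriminant of the Weierstrass model and applies Tate's algorithm: an extra $I_2$ fibre---hence $T=A_7\oplus A_1$---occurs precisely when the discriminant of the residual quartic factor vanishes, and that vanishing is exactly~\eqref{eq:gouyoubeauchamp}. In the $A_7\oplus A_1$ case the Mordell--Weil group is $\ZX/4\ZX$, so every section is torsion of order dividing $4$; this gives the $D_4$/$D_8$ dichotomy structurally and rules out order $3,5,6$ without any computation of $\tau^n$. In the $A_7$ case the Mordell--Weil group has rank~$1$ and is torsion-free, so $\tau$ is automatically of infinite order, and Proposition~\ref{prop:fixedbytwoinvolutions} finishes as you said.

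So your plan is sound, but the paper's route via the discriminant and the Oguiso--Shioda table is what actually dispatches the step you anticipated as the main difficulty; it replaces a family of iterated-composition checks by a single resultant computation and a table lookup.
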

\begin{proof}
In that situation, $S_1=S_0=R_0$, $P_0=Q_1=Q_0$ and the fiber above zero is as follows

\begin{figure}[h!]

\centering
\subfloat[ $P_0=Q_1=Q_0$ and $S_1=S_0=R_0$ ]{\label{fig:in}

\begin{tikzpicture}[scale=1.5]
\coordinate (O) at (0:0);
\foreach \i in {1,...,4}{
\coordinate (A\i) at (\i * 90 -4 -45:2);
\coordinate (B\i) at (\i * 90 +94 -45 :2);
 \tkzDefBarycentricPoint(A\i=1,B\i=2)
  \tkzGetPoint{I\i};
   \tkzDefBarycentricPoint(A\i=2,B\i=1)
  \tkzGetPoint{J\i};
}

     \node[label=above: $R_1$]  at (J2) {$\circ$};
     \node[label=above: $P_1$]  at (J4) {$\circ$};
\draw [name path=A1--B1] (A1) to[ swap] (B1);
\draw [name path=A2--B2](A2) to[ swap] (B2);
\draw[name path=A3--B3] (A3) to[ swap] (B3);
\draw[name path=A4--B4] (A4) to[ swap] (B4);

\path [name intersections={of=A1--B1 and A4--B4,by=F}];
\node  [label=above left: $P_0$] at (F) {$\circ$};

\path [name intersections={of=A3--B3 and A2--B2,by=F}];
\node  [label=above left: $S_0$] at (F) {$\circ$};
\end{tikzpicture}
}
\quad
\subfloat[The fiber above zero]{\label{fig:in}
\begin{tikzpicture}[scale=1.5]
\coordinate (O) at (0:0);
\foreach \i in {1,...,4}{
\coordinate (A\i) at (\i * 90 -4 -45:2);
\coordinate (B\i) at (\i * 90 +94 -45 :2);
 \tkzDefBarycentricPoint(A\i=1,B\i=2)
  \tkzGetPoint{I\i};
   \tkzDefBarycentricPoint(A\i=2,B\i=1)
  \tkzGetPoint{J\i};
}
 \node[label=above: $\mathcal{R}_1$]  at (J2) {$\circ$};
 
  
     \node[label=below: $\mathcal{P}_1$]  at (I4) {$\circ$};


\tkzDefBarycentricPoint(A4=1,B4=3)
  \tkzGetPoint{C4};

\tkzDefBarycentricPoint(A1=8,B1=1)
  \tkzGetPoint{C1};
  \tkzDefBarycentricPoint(C1=-3,C4=1)
  \tkzGetPoint{C14};
   \tkzDefBarycentricPoint(C4=-3,C1=1)
  \tkzGetPoint{C41};
\draw[blue ](C41)--(C14) node[pos=0.3, below  left ]{$7$};
\tkzDefBarycentricPoint(A4=1,B4=8)
  \tkzGetPoint{D4};
\tkzDefBarycentricPoint(A1=3,B1=1)
  \tkzGetPoint{D1};
  \tkzDefBarycentricPoint(D1=-3,D4=1)
  \tkzGetPoint{D14};
   \tkzDefBarycentricPoint(D4=-3,D1=1)
  \tkzGetPoint{D41};
    \tkzDefBarycentricPoint(D41=1,D14=1)
  \tkzGetPoint{Q41};
\draw[red] (D41)--(D14)  node[pos=0.3, below  left ]{$0$};
\node[label=above: $\mathcal{P}_0$]  at (Q41) {$\circ$};
\tkzDefBarycentricPoint(D4=1,C4=1)
  \tkzGetPoint{E4};

\tkzDefBarycentricPoint(D1=1,C1=1)
  \tkzGetPoint{E1};

\tkzDefBarycentricPoint(A2=1,B2=2)
  \tkzGetPoint{H2};

\tkzDefBarycentricPoint(A3=5,B3=1)
  \tkzGetPoint{H3};
  \tkzDefBarycentricPoint(H2=-3,H3=1)
  \tkzGetPoint{H23};
   \tkzDefBarycentricPoint(H3=-3,H2=1)
  \tkzGetPoint{H32};
\draw[violet ](H23)--(H32) node[pos=0.3, below  left ]{$3$};

  \tkzDefBarycentricPoint(B3=1,A3=2)
  \tkzGetPoint{K3};
\tkzDefBarycentricPoint(B2=3,A2=1)
  \tkzGetPoint{K2};
  \tkzDefBarycentricPoint(K3=-3,K2=1)
  \tkzGetPoint{K32};
   \tkzDefBarycentricPoint(K2=-3,K3=1)
  \tkzGetPoint{K23};
    \tkzDefBarycentricPoint(K23=1,K32=1)
  \tkzGetPoint{R32};
\draw[green] (K32)--(K23)  node[pos=0.3, below  left ]{$4$};
\node[label=above: $\mathcal{R}_0$]  at (R32) {$\circ$};

  \tkzDefBarycentricPoint(H3=1,K3=1)
  \tkzGetPoint{F3};

\tkzDefBarycentricPoint(K2=1,H2=1)
  \tkzGetPoint{F2};

\draw (F3)--(B3)node[pos=0.5, above left ]{$5$};
  
\draw (A4)--(E4) node[pos=0.5, below  left ]{$6$};
\draw (E1)--(B1) node[pos=0.3, below  left ]{$1$};
\draw (F2)--(A2)node[pos=0.5, above left ]{$2$};
\end{tikzpicture}
 }\caption{Fiber above zero for the weighted Gouyou-Beauchamps}\label{fig:exampleweightedGB}
\end{figure}
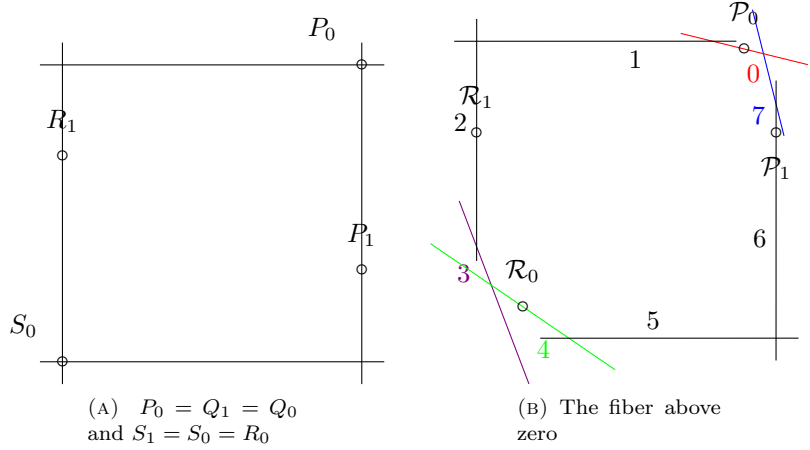

The fiber above zero is an $I_8$, which corresponds to a root lattice $A_7$. By \cite[Table 8.2]{SchuttShiodaBook}, the root lattice $T$ is either $A_7$ or $A_7 \oplus A_1$. In the latter case, the Mordell Weil group is $\Z/4\Z$ which shows that any point of the kernel curve
is  of order less than or equal to  $4$. This proves that the group of the walk is either $D_4$ or $D_8$. Following \cite[Lemma 3.3]{Tsuda}, one can compute the discriminant of the Kernel curve and one finds (see the {\sc Maple} calculation at \cite{HSurl} for this calculation and the ones that follow):

\begin{align}\label{eq:discrimGouyouBeauchamps}
\Delta:=&d_{1, 0}^2 d_{1, -1}^2 d_{-1, 1}^2 d_{-1, 0}^2 t^8(16t^4 d_{1, 0}^2 d_{-1, 0}^2-32 t^4 d_{1, 0}d_{-1, 0}d_{1, -1}d_{-1, 1}\\
&+16t^4d_{1, -1}^2 d_{-1, 1}^2-8t^2 d_{1, 0}d_{-1, 0}-8t^2d_{1, -1}d_{-1, 1}+1) \notag
\end{align}

By Tate's algorithm, the existence of a singular fiber of type $I_2$ which  would give a contribution $A_1$ to the lattice  is equivalent to the vanishing of the discriminant $\delta$ of $16t^4 d_{1, 0}^2 d_{-1, 0}^2-32 t^4 d_{1, 0}d_{-1, 0}d_{1, -1}d_{-1, 1}+16t^4d_{1, -1}^2 d_{-1, 1}^2-8t^2 d_{1, 0}d_{-1, 0}-8t^2d_{1, -1}d_{-1, 1}+1$. A {\sc Maple} computation yields  
\begin{align}
\delta=&16777216(d_{1, 0}^2 d_{-1, 0}^2-2 d_{1, 0} d_{-1, 0}d_{1, -1}d_{-1, 1}+d_{1, -1}^2 d_{-1, 1}^2) d_{1, 0}^2 d_{-1, 0}^2 d_{1, -1}^2 d_{-1, 1}^2. 
\end{align}

Since the weights are nonzero, the vanishing of $\delta$ is equivalent to 
$ (\frac{ d_{1,0}d_{-1,0}}{d_{1,-1}d_{-1,1}} -1)^2=0$ that is to $d_{1,0}d_{-1,0}=d_{1,-1}d_{-1,1}$. If $d_{1,0}d_{-1,0} \neq d_{1,-1}d_{-1,1}$, the group of the walk is infinite and the model is not decoupled because $P_0$ and $Q_0$ are fixed by an involution (see Proposition \ref{prop:fixedbytwoinvolutions}). This ends the proof.
\end{proof}

Note that  Condition \eqref{eq:gouyoubeauchamp} is automatically fulfilled by the generic central weightings of the Gouyou-Beauchamps model.
These examples illustrate how the $D$-algebraicity of a model does depend on the configuration of the base points. It is conditioned by certain algebraic relations on the weights of the model and the classification of unweighted models in terms of $D$-algebraic and $D$-transcendental ones is in a certain sense accidental since the $D$-algebraic models corresponds to the cases where the algebraic relations are satisfied when all the weights are equal. \end{exa}

\appendix
\section{Poles and Residues}\label{appendix:PR} In this section we collect various technical facts concerning the poles and residues of rational functions on $\Etproj$, that is, elements of $\CX(\Etproj)$. We will assume throughout this section that $\Etproj$ is an elliptic curve {endowed with two involutions $\ita,\itb$. We denote by  $P$ the point of $\Etproj$ such that $\tau=\itb\circ \ita $ is the translation by $P$}. In our discussions below, we need to expand elements of $\CX(\Etproj)$ in power series at points of $\Etproj$ and compare the expansions at various points.  In order to do this in a consistent way  the following was introduced in \cite{DHRS}

\begin{define}\label{def:coherentlocparam} Let $\mathcal{S} = \{ u_Q \ | \ Q\in \Etproj\}$ be a set of local parameters at the points of $\Etproj$.  We say $S$  is a {\em coherent set of local parameters} if for any $Q \in \Etproj$, 
\[ u_{\tau^{-1}(Q)} = \tau(u_Q).\]
Note that $\tau^{-1}(Q) = Q\ominus P$, where $\ominus$ is subtraction in the group structure of the elliptic curve.
\end{define}
A coherent set of local parameters always exits. To see this, Let $O$ be the origin of the group law on the  elliptic curve $\Etproj$ and, for any $Q \in \Etproj(\CX)$ let $\tau_Q$ be the translation by $Q$. The map $\tau_Q$ induces and isomorphism $\tau_Q: \CX(\Etproj) \rightarrow  \CX(\Etproj)$ (here we abuse notation and use the same symbol). Let $t$ be a local parameter at $O$. The set of local paramters $\{ u_Q = \tau_{-Q}(t) \ | \ Q\in \Etproj\}$ is a coherent set of local parameters.

%

\begin{define}\label{def:residue}Let $u_Q$ be a local parameter at a point $Q \in \Etproj$ and let $v_Q$ be the valuation corresponding to the valuation ring at $Q$.  If $f \in k(\Etproj)$ has a pole at $Q$ or order $n$, we may write

\[f = \frac{c_{Q,n}}{u_Q^n} + \ldots + \frac{c_{Q,2}}{u_Q^2} + \frac{c_{Q,1}}{u_Q} + \tilde{f}\]
where $v_Q(\tilde{f}) \geq 0$. We shall refer to $c_{Q,i}$ as the {\rm residue of order $i$ at $Q$.}\end{define}

 In the usual presentation of Riemann surfaces, one speaks of residues of meromorphic differential forms.  These do not depend on the local parameters whereas any discussion of a powerseries expansion of a function at a point does depend on the local parameter. Fixing a set of local parameters allows the notion of residue of order $i$ to be well defined. 

%

\vspace{.1in}

The following definition is similar to Definition 2.3 of \cite{Chen_Singer12}.

\begin{define} Let $f \in k(\Etproj)$ and $S =\{ u_Q \ | \ Q\in \Etproj\}$ be a coherent set of local parameters and $Q\in \Etproj$. For each $j \in \NX_{>0}$ we define the {\rm orbit residue of order $j$ at $Q$} to be
\[ \ores_{Q,j}(f) = \sum_{i \in \ZX} c_{Q\oplus iP, j.}\]
\end{define}

Note that if  $Q' = Q \sim P$, then $ \ores_{Q',j}(f) =  \ores_{Q,j}(f)$ for any $j \in \NX_{>0}$. Furthermore $\ores_{Q,j}(f) = \ores_{Q,j}(\tau(f))$. The following refines   Proposition B.8 in \cite{DHRS} and is the reason for defining the orbit residue.

\begin{prop}\label{Prop2} Let $b \in k(\Etproj)$ and $S =\{ u_Q \ | \ Q\in \Etproj\}$ be a coherent set of local parameters.  The following are equivalent.
\begin{enumerate}
\item There exists  $g\in k(\Etproj)$  such that
\[b = \tau(g) - g.\]
\item For any $Q \in \Etproj$ and $j \in \NX_{>0}$
\[ \ores_{Q,j}(b) = 0.\]
\end{enumerate}
\end{prop}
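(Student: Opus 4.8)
The statement is a characterization of solvability of the "twisted difference equation" $b=\tau(g)-g$ on the function field of an elliptic curve, with $\tau$ a translation of infinite order, in terms of the vanishing of all orbit residues. The direction $(1)\Rightarrow(2)$ is the easy one and I would do it first: if $b=\tau(g)-g$, then for every point $Q$ and order $j$, the residue of order $j$ of $b$ at a point $R$ is $c_{\tau^{-1}(R),j}(g)-c_{R,j}(g)$ — here one uses that $S$ is a coherent set of local parameters, so the local expansion of $\tau(g)$ at $R$ is governed by the expansion of $g$ at $\tau^{-1}(R)$ via $u_{\tau^{-1}(R)}=\tau(u_R)$. Summing $c_{Q\oplus iP,j}(b)$ over $i\in\ZX$ telescopes: the sum $\sum_i\bigl(c_{Q\oplus(i-1)P,j}(g)-c_{Q\oplus iP,j}(g)\bigr)$ is a finite sum (only finitely many poles) that vanishes. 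Hence $\ores_{Q,j}(b)=0$.

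\textbf{The converse.} For $(2)\Rightarrow(1)$ the plan is to build $g$ pole-by-pole, working one $\tau$-orbit at a time and, within an orbit, from the highest pole order downward. Group the finitely many poles of $b$ into $\tau$-orbits $\mathcal O_1,\dots,\mathcal O_m$; it suffices to solve $b_k=\tau(g_k)-g_k$ for the "partial fraction part" $b_k$ of $b$ supported on $\mathcal O_k$ (plus possibly a constant, which I will absorb at the end), since then $g=\sum g_k$ works up to a constant — and a constant $c$ is always of the form $\tau(\gamma)-\gamma$? No: on an elliptic curve constants are $\tau$-invariant, so $\tau(\gamma)-\gamma$ is never a nonzero constant for $\gamma$ rational. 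I must instead check that the construction produces exactly $b$, not $b$ plus a constant; this is ensured because I will match all residues of all orders including order... actually constants contribute no residues, so one more argument is needed: compare $b$ and $\tau(g)-g$, which now have the same polar parts hence differ by a constant $c$; then apply the sum-over-a-full-period or an averaging trick. The cleanest fix, which I would use, is the one already deployed in the proof of Lemma~\ref{cor:genus1b}: exploit the extra symmetry $\ita(b)=-b$ — but wait, Proposition~\ref{Prop2} is stated for a \emph{general} $b$, so I cannot assume that here. So instead: having matched all polar parts, $\tau(g)-g-b=c\in\CX$; now evaluate the "sum of all residues of order $1$" of both sides over $\Etproj$ — the total sum of order-$1$ residues of any single-valued rational function arranged along a fundamental domain must vanish appropriately, forcing $c=0$. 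Actually the honest route is: redo the orbit-by-orbit construction so that it is exact. Within one orbit $\mathcal O$, pick a base point $Q_0$; the poles of $b$ on $\mathcal O$ are at $Q_0\oplus i P$ for $i$ in some finite set $I$. For each pole order $j$ from the maximum $N$ down to $1$, I want to choose a rational function with poles only on $\mathcal O$ realizing a prescribed collection of order-$j$ coefficients, modulo lower order. The key subroutine is: \emph{given numbers $(\alpha_i)_{i\in\ZX}$, finitely many nonzero, I can find $g$ with order-$j$ residues at $Q_0\oplus iP$ whose telescoped differences reproduce $(\alpha_i)$ iff $\sum_i\alpha_i=0$} — and $\sum_i\alpha_i$ is exactly the orbit residue, which is $0$ by hypothesis (after subtracting off contributions of higher-order terms already handled, which do not affect the order-$j$ orbit residue). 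Concretely: solve the finite linear recursion $\beta_i-\beta_{i-1}=\alpha_i$ for $(\beta_i)$, which is possible with $(\beta_i)$ eventually $0$ on both ends precisely because $\sum\alpha_i=0$; then by Riemann–Roch choose $g$ with prescribed order-$j$ coefficients $\beta_i$ at $Q_0\oplus iP$ and no higher-order poles there — here one needs a function on the elliptic curve with a prescribed principal part of order exactly $j$ at each of a finite set of points, which exists since the space $\calL(D)$ for $D=\sum j\cdot(Q_0\oplus iP)$ has the expected dimension and the principal-part map is surjective onto the appropriate quotient (the only obstruction, for a single simple pole, being that there is no function with exactly one simple pole on a curve of genus $\geq 1$ — but we have at least two points when there are simple poles with nonzero orbit residue-contributions, and when only one simple pole would be needed its coefficient sum is zero so we need nothing; this edge case must be handled, and it is where I expect to spend care). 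Then $\tau(g)-g$ has order-$j$ coefficients $\beta_{i-1}-\beta_i=-\alpha_i$, or with a sign adjustment $\alpha_i$, matching $b$ in order $j$ on $\mathcal O$; subtract and induct downward in $j$.

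\textbf{Main obstacle.} The genuinely delicate point is the simple-pole ($j=1$) case and the constant ambiguity: on a curve of positive genus one cannot prescribe an arbitrary collection of simple-pole residues — the global sum must vanish (residue theorem for the differential $g\,\frac{du}{u}$-type argument), and moreover one cannot realize a principal part consisting of a single simple pole. I would handle this by noting that within a $\tau$-orbit the hypothesis $\ores_{Q,1}(b)=0$ gives exactly the vanishing of the relevant residue sum, so a function with the required simple poles \emph{on that orbit} exists (the obstruction to realizing simple residues $(\gamma_P)_{P\in T}$ with $\sum\gamma_P=0$ on an elliptic curve is void, by Riemann–Roch / Mittag–Leffler on curves), and the telescoping recursion $\beta_i-\beta_{i-1}=\alpha_i$ has a finitely-supported-difference solution precisely under $\sum\alpha_i=0$. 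Finally, after matching all polar parts I get $\tau(g)-g-b=c\in\CX$; to kill $c$ I would either observe that $\tau(g)-g$ and $b$ both have zero "global residue" in a suitable sense and so does any honest rational function, or — more simply — note the whole construction can be normalized so that $g$ vanishes at the origin $O$ and at $\tau^{-1}(O)$, pinning down $c$ by evaluation. I expect the bookkeeping of these normalizations, together with the genus-$1$ Riemann–Roch input guaranteeing existence of functions with prescribed principal parts along a $\tau$-orbit, to be the crux; everything else is the formal telescoping already sketched and the coherence of the local parameters. This refines Proposition~B.8 of \cite{DHRS} essentially by tracking the contributions order-by-order rather than only at order $1$.
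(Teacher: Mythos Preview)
Your direction $(1)\Rightarrow(2)$ is correct. The converse, however, has a genuine gap at the simple-pole step, and you cannot repair it because the proposition as stated (for arbitrary $b$) is actually false. Take any nonconstant $h\in\calL(Q+\tau(Q))$: it has simple poles at $Q$ and $\tau(Q)$ with order-$1$ residues $\alpha,-\alpha$, so all orbit residues vanish; yet $h=\tau(g)-g$ would force (by your own telescoping) $g$ to have a \emph{single} simple pole at $\tau(Q)$ with residue $\alpha$, which is impossible on a curve of genus~$1$. In your recursion $\beta_{i+1}-\beta_i=\alpha_i$ this is exactly the case where the unique finitely supported solution has $\sum_i\beta_i=\alpha\neq 0$; your parenthetical ``when only one simple pole would be needed its coefficient sum is zero so we need nothing'' is simply mistaken. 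The constant obstruction you worry about is real for the same reason: a nonzero constant $b$ satisfies~(2) but not~(1), since $\tau(g)-g$ can never be a nonzero constant when $\tau$ has infinite order (a nonconstant $g$ has finitely many poles, which cannot all cancel along an infinite $\tau$-orbit).

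The paper does not attempt your direct construction. It quotes Proposition~B.8 of~\cite{DHRS}, which gives the equivalence of~(2) with the \emph{weaker} statement $b=\tau(g)-g+h$ for some $h\in\calL(Q+\tau(Q))$ --- this is precisely what your argument would prove once the order-$1$ step is done honestly --- and then invokes Lemma~\ref{cor:genus1b} to absorb the extra $h$. That lemma, however, carries the hypothesis $\ita(b)=-b$; so the paper's proof of Proposition~\ref{Prop2} tacitly uses this antisymmetry as well, which holds in every application in the paper (where $b=x(\ita(y)-y)$). Your instinct midway through --- to use $\ita(b)=-b$ --- was exactly right: it is the missing ingredient, and without it the equivalence fails.
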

\begin{proof} Proposition B.8 in \cite{DHRS} implies that $(2)$ is equivalent to {\it There exists $Q\in \Etproj$,  $h \in \calL(Q + \tau(Q))$ and $g \in \Etproj$. such that
$b = \tau(g) - g + h.$}. Lemma~\ref{cor:genus1b} implies that this latter condition is equivalent to $(1)$. \end{proof}

When applying Proposition~\ref{Prop2} we would like to verify the second condition using the fact that on a compact Riemann surface one has that the sum of the residues of a differential form is zero.  Denoting by ${\rm Res}_Q\omega$ the usual residue at a point $Q$  of a differential form $\omega$, we want to compare ${\rm Res}_Q(f\omega)$ with $c_{Q,1}$ where $f$ is as in Definition~\ref{def:residue}. To do this we need to make a more careful selection of a coherent family of local parameters. For this we will use the following lemma whose proof is similar to \cite[Theorem 14, p.~127]{chevalley}.

\begin{lem}\label{lem:chevlem} Let $C$ be a nonsingular curve and $K = \CX(C)$ its function field.
 Given a point $Q \in C$, a differential form $\omega$ regular  and nonzero at $Q$,  and  integer $n \in \NX$, there exists a local parameter $t_n \in K$ at $Q$ such $\omega = (1+f)dt_n$ where $v_Q(f)>n$. 
 \end{lem}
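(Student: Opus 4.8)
\textbf{Proof plan for Lemma~\ref{lem:chevlem}.}

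The plan is to construct $t_n$ by successively correcting an arbitrary local parameter so that the expansion of $\omega$ in the new parameter matches $dt_n$ to higher and higher order. First I would fix any local parameter $s$ at $Q$. Since $\omega$ is regular and nonzero at $Q$, we may write $\omega = u\, ds$ with $u \in \calO_{C,Q}$ and $u(Q) \neq 0$; dividing by the unit $u(Q)$ we may assume $u(Q) = 1$, so $\omega = (1+g)\,ds$ with $v_Q(g) \geq 1$. The goal is to find $t_n = s + (\text{higher order terms})$ such that the multiplier becomes $1 + f$ with $v_Q(f) > n$.

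The key step is an inductive improvement. Suppose we have a local parameter $t$ at $Q$ with $\omega = (1 + f)\,dt$ and $v_Q(f) = m$ for some $1 \leq m \leq n$; write the leading term of $f$ as $a t^m + (\text{order} > m)$ with $a \in \CX^*$. I would set $t' = t + \frac{a}{m+1} t^{m+1}$, which is again a local parameter at $Q$ since the coefficient of $t$ is $1$. Then $dt' = (1 + a t^m)\,dt$, so
\[
\omega = (1+f)\,dt = \frac{1+f}{1 + a t^m}\,dt' = \bigl(1 + (f - a t^m) + (\text{order} > m)\bigr)\,dt'.
\]
Here I am using that $(1+at^m)^{-1} = 1 - at^m + O(t^{2m})$ and that $f - a t^m$ has valuation $> m$ by the choice of $a$; the cross terms $f\cdot(-at^m + \cdots)$ also have valuation $> m$. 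Hence $\omega = (1 + f')\,dt'$ with $v_Q(f') \geq m+1$. Iterating this correction finitely many times (from $m=1$ up to $m=n$, or stopping earlier if the valuation already exceeds $n$) produces the desired $t_n$ with $\omega = (1+f)\,dt_n$, $v_Q(f) > n$. The only subtlety to check is that each correction $t' = t + \frac{a}{m+1} t^{m+1}$ does not spoil the improvements already achieved at lower orders, which is immediate since $t' \equiv t \pmod{t^{m+1}}$ and $m \geq 1$; I would remark on this but not belabor it. There is no serious obstacle here — this is the standard Chevalley-style argument (\cite[Theorem 14, p.~127]{chevalley}) adapted to the formal/local setting on a curve, and the induction terminates after at most $n$ steps.

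One should also note that the characteristic-zero hypothesis (we work over $\CX$) is what makes the division by $m+1$ legitimate, so no care about small residue characteristics is needed. $\square$
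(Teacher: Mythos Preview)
Your proof is correct and rests on the same idea as the paper's --- building a truncated formal antiderivative of the coefficient of $\omega$ --- but the paper carries it out in a single step rather than inductively: starting from $\omega = (a_0 + a_1 t + \cdots + a_n t^n + f_n)\,dt$ with $v_Q(f_n) > n$, it simply sets $t_n = a_0 t + \tfrac{a_1}{2}t^2 + \cdots + \tfrac{a_n}{n+1}t^{n+1}$, so that $dt_n = (a_0 + \cdots + a_n t^n)\,dt$ and hence $\omega = dt_n + f_n\,dt = (1 + f)\,dt_n$ with $v_Q(f) > n$. Unrolling your induction recovers exactly this formula, so the two arguments are equivalent; the paper's version is just more concise. (One tiny quibble: your phrase ``dividing by the unit $u(Q)$'' should be read as rescaling the parameter $s \mapsto u(Q)\,s$, not as rescaling $\omega$.)
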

 \begin{proof} Let $t \in K$ be any local parameter at $Q$ and let 
 \[ \omega = (a_0 + a_1t + \ldots +a_nt^n + f_n) dt.\]
 where $f_n \in K$ and  $v_Q(f_n) > n$. Let 
 \[t_n = a_0t + \frac{a_1}{2} t^2 + \ldots + \frac{a_n}{n+1} t^{n+1}.\]
 We then have that
 \[ \omega - dt_n = (a_0 + a_1t + \ldots +a_nt^n + f_n - \frac{dt_n}{dt} dt =f_n dt.\]\end{proof}
   Let $\Omega$ be a fixed regular differential form on $\Etproj$. The maps $\ita, \itb, \tau = \itb\ita$ induce maps $\ita^*, \itb^*, \tau^* $ on the space of differential forms. From \cite[Lemma 2.5.1 and Proposition 2.5.2]{DuistQRT}, we have that $\iota^*_i(\Omega) = -\Omega$ for $i=1,2$ and $\tau^*(\Omega) = \Omega$. 
 
 \begin{defn} Let $n \in \NX$. We say that a coherent set $\{u_Q \ | \ Q\in \Etproj\}$ of local parameters is {\rm n-coherent} if for each $Q \in \Etproj$, $\Omega = (1 + f_Q)du_Q$ where $v_Q(f_Q) > n$.\end{defn}
There always exists an $n$-coherent set of local parameters. To see this one modifies    the construction following Definition~\ref{def:coherentlocparam} by starting  with a local parameter $t_n$ at $O$ satisfying the conclusion of Lemma~\ref{lem:chevlem} with respect to $\Omega$, that is, the order of $\Omega - dt_n$ at $O$ is  greater than $n$.\\

\noindent \emph{{\bf Fixed Assumption:} Through the  paper,  we assume that when the kernel curve $\Etproj$ is of genus one, we  fix  a $3$-coherent set of local parameters ${\{u_{Q} \ | \ Q \in \Etproj \}}$. The various elements  that we consider will have poles of order at most $3$ so we can always apply  Lemmas~\ref{resthm} and \ref{lem:horesidues}.}\\

Having an  $n$-coherent set of local parameters allows us to use the usual Residue Theorem. 
\begin{lemma}\label{resthm} Let $h\in\CX(\Etproj)$ and assume that $h$ has poles of order at most $n$ at any point of $\Etproj$. If $\{u_Q\}$ is an $n$-coherent set of local parameters, then  for each $Q \in \Etproj$, ${\rm Res}_{Q}(b\Omega)=c_{Q,1}$. Therefore, $\sum_{Q \in \Etproj} c_{Q,1}=0$.
\end{lemma}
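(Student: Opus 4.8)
\textbf{Proof plan for Lemma~\ref{resthm}.}

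The statement is essentially the classical Residue Theorem for the compact Riemann surface $\Etproj$, together with the observation that the $n$-coherence of the chosen local parameters makes the intrinsic residues of the differential form $h\Omega$ agree with the ``residues of order $1$'' from Definition~\ref{def:residue}. The plan is as follows. First, recall that on any compact Riemann surface the sum $\sum_{Q}\operatorname{Res}_Q(\eta)$ of the residues of a meromorphic differential form $\eta$ over all its poles is zero; applied to $\eta = h\Omega$, which has only finitely many poles since $h\in\CX(\Etproj)$ and $\Omega$ is regular, this gives $\sum_{Q\in\Etproj}\operatorname{Res}_Q(h\Omega)=0$. So the only real content is the local identity $\operatorname{Res}_Q(h\Omega)=c_{Q,1}$ at each $Q$.

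To establish this local identity, fix $Q\in\Etproj$ and let $u=u_Q$ be the given local parameter. Since $\{u_Q\}$ is an $n$-coherent set and $h$ has a pole of order at most $n$ at $Q$, we may write $\Omega = (1+f_Q)\,du$ with $v_Q(f_Q)>n$, and
\[
h = \frac{c_{Q,n}}{u^{n}} + \cdots + \frac{c_{Q,1}}{u} + \tilde h, \qquad v_Q(\tilde h)\ge 0,
\]
by Definition~\ref{def:residue}. Then
\[
h\Omega = \Bigl(\frac{c_{Q,n}}{u^{n}} + \cdots + \frac{c_{Q,1}}{u} + \tilde h\Bigr)(1+f_Q)\,du.
\]
The residue $\operatorname{Res}_Q(h\Omega)$ is by definition the coefficient of $u^{-1}$ in the Laurent expansion of $h\Omega/du$ in the parameter $u$. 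Expanding the product: the terms coming from $\tilde h\cdot(1+f_Q)$ contribute no negative powers of $u$ since $v_Q(\tilde h)\ge 0$ and $v_Q(f_Q)\ge 0$; the terms $\frac{c_{Q,j}}{u^j}\cdot f_Q$ for $1\le j\le n$ contribute only powers $u^{k}$ with $k \ge -j + v_Q(f_Q) \ge -n + (n+1) = 1$, hence no $u^{-1}$ term; and the terms $\frac{c_{Q,j}}{u^j}\cdot 1$ contribute a $u^{-1}$ term only when $j=1$, namely $c_{Q,1}u^{-1}$. Therefore the coefficient of $u^{-1}$ is exactly $c_{Q,1}$, i.e.\ $\operatorname{Res}_Q(h\Omega)=c_{Q,1}$. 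Combining with the Residue Theorem yields $\sum_{Q\in\Etproj}c_{Q,1}=0$, which is the claim.

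I do not anticipate a genuine obstacle here: the statement is a bookkeeping consequence of the design of the $n$-coherent family (which was precisely engineered in the preceding paragraphs so that $\Omega$ differs from $du_Q$ by something vanishing to high order). The one point worth stating carefully is the inequality $-n + v_Q(f_Q) \ge 1$, which is exactly why ``$n$-coherent'' is defined with the strict inequality $v_Q(f_Q)>n$ rather than $\ge n$, and why the Fixed Assumption takes $n\ge 3$ to cover all functions with poles of order at most $3$ that appear in the paper. No deeper input (e.g.\ about the involutions or the group structure) is needed for this particular lemma.
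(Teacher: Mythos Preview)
Your proof is correct and follows essentially the same approach as the paper: write $\Omega=(1+f_Q)\,du_Q$ with $v_Q(f_Q)>n$, expand $h\Omega$ in the local parameter, observe that the cross terms $\frac{c_{Q,j}}{u_Q^j}\cdot f_Q$ contribute only nonnegative powers, and then invoke the classical Residue Theorem. The paper's version is more terse (it simply states the resulting expansion and appeals to the Residue Theorem), but the argument is the same.
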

 \begin{proof} Since $\Omega = (1 + f_Q) du_Q$ with $v_Q(f_Q) > n$ we have 
\[b\Omega = (\frac{c_{Q,n}}{u_Q^n} + \ldots + \frac{c_{Q,2}}{u_Q^2} + \frac{c_{Q,1}}{u_Q} + \tilde{f_Q}) du_Q\]
where $v_Q(\tilde{f_Q}) > 0$.  One now applies the usual Residue Theorem.\end{proof}

 \begin{rmk} 1. In \cite{DHRS19}, the authors introduced the notion of a coherent set of analytic local parameters and showed that such a set exists  on the universal cover of $\Etproj$ and using these to induce such a set on $\Etproj$. Alternatively,  one can always find a coherent set of local parameters $\{u_Q \ | \ Q\in \Etproj\}$ such that for each $Q$, $\Omega = du_Q$. One does this in the following way. If $t$ is an analytic local parameter at $O$, we write $\Omega = \sum_{i = 0}^\infty a_i t^i dt$, $a_0 \neq 0$. The analytic function $u_0 = \sum_{i = 0}^\infty \frac{a_i}{i+1} t^{i+1}$ is an analytic local parameter at $0$ and one can propagate this to become a coherent local family as above. Nonetheless, the $u_Q$ gotten in this way need not be in the function field of the curvve since they are only defined locally. We introduce the notion of $n$-coherence to be able to stay in the algebraic setting.   
 
 2.~{In \cite{dreyfus2019length}, the authors uniformize the kernel curve $E$ as a Tate curve, that is, as $C^*/q^\Z$ where $C$ is an algebraically closed field extension of $\Q(t)$. In that setting, the field $C(E)$ corresponds 
to the field $\mathcal{M} er (C^*)$ of meromorphic function over $C^*$ fixed by the automorphism $f(z) \mapsto f(qz)$ of $\mathcal{M} er (C^*)$. The first involution corresponds to $f(z) \mapsto f(1/z)$ and the automorphism $\tau$ to 
$f(z) \mapsto f(\tilde{q}z)$. The regular differential form on $C^*/q^\Z$ is $\frac{dz}{z}$ and the coherent set of local parameters given by the $ u_{\overline{\alpha}}: \overline{z } \mapsto ln(\frac{z}{\alpha})$ for $z$ close to $\alpha$ satisfies all the required properties.}\end{rmk}

 The following summarizes useful properties of the $c_{Q,i}$ and the $ \ores_{Q,j}(f)$ .

 \begin{lemma}\label{lem:horesidues} Let $n>1$ and $\{u_Q\}$ be an $n$-coherent set of local parameters. Assume $b \in \CX(\Etproj)$ satisfy $\ita(b) = -b$. \\
1. For each $Q \in \Etproj$,  $\ita(u_Q) = -u_{\ita(Q)} +g_{\ita(Q)}$ where $v_{\ita(Q)}(g_{\ita(Q)}) > n+1$.\\
2.  If  
\begin{align} \label{eq:laurentb}
b =& \frac{c_{Q,n}}{u_Q^n} + \ldots + \frac{c_{Q,2}}{u_Q^2} + \frac{c_{Q,1}}{u_Q} + \tilde{f}
\end{align}
where $v_Q(\tilde{f}) \geq 0$,  then
\[
b  =\frac{c_{\iota_{1}(Q),n}}{u_{\iota_{1}(Q)}^n} + \ldots + \frac{c_{\iota_{1}(Q),2}}{u_{\iota_{1}(Q)}^2} + \frac{c_{\iota_{1}(Q),1}}{u_{\iota_{1}(Q)}} + \tilde{g}
\]
where $v_{\iota_{1}(Q)}(\tilde{g}) \geq 0$ and  $c_{\iota_{1}(Q),j}=(-1)^{j+1}c_{Q,j}$ for any $j$. If follows that, if all the poles of $b$ belong to the same $\tau$-orbit, then, for any even number $j$, we have $\ores_{Q,j}(b)=0$.\\

\end{lemma}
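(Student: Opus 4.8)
The final statement to prove is Lemma~\ref{lem:horesidues}, specifically items 1 and 2 about how the involution $\ita$ acts on a $3$-coherent (or $n$-coherent) set of local parameters and on the Laurent coefficients $c_{Q,j}$ of a function $b$ with $\ita(b)=-b$.

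\textbf{Approach to item 1.} The plan is to exploit the defining property of $n$-coherence, namely $\Omega=(1+f_Q)du_Q$ with $v_Q(f_Q)>n$ at every point, together with the fact (quoted from \cite[Lemma 2.5.1, Prop 2.5.2]{DuistQRT}) that $\ita^*\Omega=-\Omega$. First I would apply $\ita^*$ to the equation $\Omega=(1+f_{\ita(Q)})\,du_{\ita(Q)}$ at the point $\ita(Q)$; since $\ita$ is an involution, pulling back lands us at $Q$, and we get $-\Omega=(1+\ita(f_{\ita(Q)}))\,d(\ita(u_{\ita(Q)}))$. On the other hand $-\Omega=-(1+f_Q)\,du_Q=d(-u_Q)+(\text{higher order})\,du_Q$. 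Set $w:=\ita(u_{\ita(Q)})$. This is a local parameter at $Q$ (as $\ita$ is an isomorphism sending $\ita(Q)$ to $Q$ and $u_{\ita(Q)}$ vanishes to order $1$ at $\ita(Q)$), and comparing the two expressions for $-\Omega$ shows $dw+\cdots = -du_Q+\cdots$ where the omitted terms have valuation $>n$ relative to $du_Q$. Integrating (i.e.\ comparing antiderivatives of $1$-forms that agree to order $>n$, which forces the functions to agree to order $>n+1$ once we match the constant term, which is $0$ since both vanish at $Q$) gives $w=-u_Q+g$ with $v_Q(g)>n+1$, which is exactly the claim with $g_{\ita(Q)}:=g$. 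The only subtlety is keeping track of valuations under the change between the local parameter $u_Q$ and $w$, but since both are genuine uniformizers they differ by a unit, so $v_Q$ computed either way agrees and the order bounds transfer cleanly.

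\textbf{Approach to item 2.} Here I would simply substitute item 1 into the Laurent expansion \eqref{eq:laurentb} of $b$ at $Q$. Applying $\ita$ to that expansion and using $\ita(b)=-b$ gives
$-b = \dfrac{c_{Q,n}}{(\ita(u_Q))^n}+\cdots+\dfrac{c_{Q,1}}{\ita(u_Q)}+\ita(\tilde f)$,
and now $\ita(u_Q)=-u_{\ita(Q)}+g_{\ita(Q)}$ with $v_{\ita(Q)}(g_{\ita(Q)})>n+1$. For each $j\le n$ one expands $(\ita(u_Q))^{-j}=(-1)^j u_{\ita(Q)}^{-j}\bigl(1-g_{\ita(Q)}/u_{\ita(Q)}\bigr)^{-j}=(-1)^j u_{\ita(Q)}^{-j}+(\text{terms of valuation}\ge -j+2)$, since $g_{\ita(Q)}/u_{\ita(Q)}$ has valuation $>n\ge j$, hence the correction terms never reach order $-j$ or higher in the principal part once summed against lower-order $c_{Q,k}$'s — more precisely, the correction to $(\ita(u_Q))^{-j}$ has valuation $\ge -j+ (n+1) - 1 > -j$... wait, I should be careful: the correction has valuation $\ge -j + (v(g)-1) \ge -j + n+1 > 0 \ge$ anything we care about for principal parts up to order $n$. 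So negating, $-b$ has principal part $\sum_j (-1)^{j+1} c_{Q,j} u_{\ita(Q)}^{-j}$ at $\ita(Q)$, i.e.\ $c_{\ita(Q),j}=(-1)^{j+1}c_{Q,j}$. Finally, if all poles of $b$ lie in one $\tau$-orbit, then for even $j$ the orbit residue $\ores_{Q,j}(b)=\sum_i c_{Q\oplus iP,j}$ pairs up each term $c_{R,j}$ with $c_{\ita(R),j}=-c_{R,j}$ (using that $\ita$ preserves this single orbit, by Lemma~\ref{lem:useful}.1 which gives $\ita(\tau^k(R))=\tau^{-k-1}(\ita(R))$ so $\ita$ maps the orbit to itself), and these cancel, giving $0$; one just checks the fixed point $R=\ita(R)$ case, where $c_{R,j}=(-1)^{j+1}c_{R,j}=-c_{R,j}$ forces $c_{R,j}=0$ for even $j$ directly.

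\textbf{Main obstacle.} The genuinely delicate part is item 1: making the passage from "two regular $1$-forms agree modulo order $>n$" to "their local antiderivatives, the two uniformizers, agree modulo order $>n+1$" fully rigorous, including verifying that $w=\ita(u_{\ita(Q)})$ really is a local parameter and that all valuation bounds are insensitive to which uniformizer one measures with. Everything in item 2 is then a mechanical binomial-expansion bookkeeping argument, and the cancellation statement for even $j$ is a short combinatorial consequence of $\ita$ acting on the orbit together with the sign $(-1)^{j+1}=-1$.
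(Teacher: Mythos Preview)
Your approach to items 1 and 2 is essentially the same as the paper's: for item~1 you compare the two expressions for $-\Omega$ coming from $n$-coherence and from $\ita^*\Omega=-\Omega$, then read off that $\ita(u_Q)$ and $-u_{\ita(Q)}$ agree to order~$>n+1$ (the paper phrases this as ``write $\ita(u_Q)=c\,u_{\ita(Q)}+g$, differentiate, and compare'', which is the same as your ``integrate''); for item~2 you substitute item~1 into the Laurent expansion and expand binomially, exactly as the paper does.

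Two small corrections in your final paragraph on the orbit-residue claim (which the paper itself leaves as an unproven ``It follows''). First, the identity you write as $\ita(\tau^k(R))=\tau^{-k-1}(\ita(R))$ is off by one: from $\tau=\itb\ita$ on points one gets $\ita\tau=\tau^{-1}\ita$, hence $\ita\tau^k=\tau^{-k}\ita$. Second, Lemma~\ref{lem:useful}.1 concerns the specific base points $Q_i$ and does not give the general orbit-preservation you need. The clean reason $\ita$ sends each pole into the single $\tau$-orbit is simply that $\ita(b)=-b$ forces $\ita$ to permute the poles of $b$, and by hypothesis all poles lie in that one orbit. With these fixes your pairing argument (including the fixed-point case $R=\ita(R)$, where $c_{R,j}=-c_{R,j}$ for even $j$) goes through.
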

\begin{proof} 1.  We have $\Omega = (1+f_Q) du_Q = (1+f_{\ita(Q)})d(u_{\ita(Q)})$ where $v_Q(f_Q) >n, v_{\ita(Q)}(f_{\ita(Q)}) >n$. Applying $\ita^*$ to the first equality we have 
\[-\Omega = \ita^*(\Omega) = (1 + \ita(f_Q))\ita^*(du_Q) = (1 + \ita(f_Q))d(\ita(u_Q)).\]

Since $\ita(u_Q)$ is again a local parameter at $\ita(Q)$ we have $\ita(u_Q) = cu_{\ita(Q)} + g_{\ita(Q)}$ where  $c \neq 0$ and $v_{\ita(Q)}(g_{\ita(Q)}) > 1$. Therefore 
\[ d(\ita(u_Q)) = (c+ \frac{d g_{\ita(Q)}}{du_{\ita(Q)}})du_{\ita(Q)}\]
and
\[-\Omega = (-1 - f_{\ita(Q)}) du_{\ita(Q)}  = (1 +  \ita(f_Q))(c +\frac{d g_{\ita(Q)}}{du_{\ita(Q)}})du_{\ita(Q)}.\]
Expanding the final product, one sees that $c = -1 $ and $v_{\ita(Q)}(g_{\ita(Q)}) > n+1$.\\[0.1in]
2. This statement and proof are similar to \cite[Lemma C.1]{DHRS}. Applying $\ita$ to \eqref{eq:laurentb}, we have 
)
\begin{align*}
-b = \ita(b) &=  \frac{c_{Q,n}}{\ita(u_Q)^n} + \ldots + \frac{c_{Q,2}}{\ita(u_Q)^2} + \frac{c_{Q,1}}{\ita(u_Q)} + \ita(\tilde{f})\\
    & = \frac{(-1)^nc_{Q,n}}{u_{\ita(Q)}^n}(1+g_n) + \ldots + \frac{(-1)^2c_{Q,2}}{u_{\ita(Q)}^2}(1+g_2) + \frac{(-1)^1c_{Q,1}}{u_{\ita(Q)}}(1+g_1) + \ita(\tilde{f})
\end{align*}
where $v_{\ita(Q)}(g_\ell) > n, n\geq \ell \geq 1$.  This follows from the fact that $\ita(u_Q) = u_{\ita(Q)} +g_{\ita(Q)}, v_{\ita(Q)}(g_{\ita(Q)}) >n+1$ and so $\ita(u_Q)^{-\ell} = (-1)^\ell u_{\ita(Q)}^{-\ell} (1 + g_\ell)$ for some $g_\ell$ with $v_{\ita(Q)}(g_\ell) > n$. Equating negative powers of $u_{\ita(Q)}$ yield the result.\end{proof}


\begin{thebibliography}{BBMKM16}

\bibitem[BBMKM16]{BoBMKaMe-16}
A.~Bostan, M.~Bousquet-M\'elou, M.~Kauers, and S.~Melczer, \emph{On
  3-dimensional lattice walks confined to the positive octant}, Ann. Comb.
  \textbf{20} (2016), no.~4, 661--704. \MR{3572381}

\bibitem[BBMR15]{BBMR15}
Olivier Bernardi, Mireille Bousquet-M\'elou, and Kilian Raschel, \emph{Counting
  quadrant walks via {Tutte}'s invariant method}, An extended abstract to
  appear in {\it Proceedings of {FPSAC} 2016}, Discrete Math. Theor. Comput.
  Sci. Proc., arXiv:1511.04298, 2015.

\bibitem[BBMR17]{BBMR17}
\bysame, \emph{Counting quadrant walks via {Tutte}'s invariant method},
  Preprint, arXiv:1708.08215, 2017.

\bibitem[BCCL10]{BCCL10}
Alin Bostan, Shaoshi Chen, Fr\'{e}d\'{e}ric Chyzak, and Ziming Li,
  \emph{Complexity of creative telescoping for bivariate rational functions},
  I{SSAC} 2010---{P}roceedings of the 2010 {I}nternational {S}ymposium on
  {S}ymbolic and {A}lgebraic {C}omputation, ACM, New York, 2010, pp.~203--210.
  \MR{2920555}

\bibitem[BCJPL20]{JPBCL}
Alin Bostan, Fr\'ed\'eric Chyzac, Antonio Jim\'enez-Pastor, and Pierre Lairez,
  \emph{The {Sage Package} {\tt comb\_walks} for walks in the quarter plane},
  {ACM Communications in Computer Algebra} \textbf{54} (2020), no.~2, 30--37.

\bibitem[BF02]{BanderierFlajolet}
Cyril Banderier and Philippe Flajolet, \emph{Basic analytic combinatorics of
  directed lattice paths}, vol. 281, 2002, Selected papers in honour of Maurice
  Nivat, pp.~37--80.

\bibitem[BMM10]{BMM}
Mireille Bousquet-M{\'e}lou and Marni Mishna, \emph{Walks with small steps in
  the quarter plane}, Algorithmic probability and combinatorics, Contemp.
  Math., vol. 520, Amer. Math. Soc., Providence, RI, 2010, pp.~1--39.

\bibitem[BRS14]{BRS}
Alin Bostan, Kilian Raschel, and Bruno Salvy, \emph{Non-{D}-finite excursions
  in the quarter plane}, J. Combin. Theory Ser. A \textbf{121} (2014), 45--63.
  \MR{3115331}

\bibitem[BvHK10]{BostanKauersTheCompleteGenerating}
Alin Bostan, Mark van Hoeij, and Manuel Kauers, \emph{The complete generating
  function for {G}essel walks is algebraic}, Proc. Amer. Math. Soc.
  \textbf{138} (2010), no.~9, 3063--3078.

\bibitem[Che63]{chevalley}
Claude Chevalley, \emph{Introduction to the theory of algebraic functions of
  one variable}, Mathematical Surveys, No. VI, American Mathematical Society,
  Providence, R.I., 1963. \MR{0181641}

\bibitem[CMMR17]{CourtielMelczerMishnaRaschel}
J.~Courtiel, S.~Melczer, M.~Mishna, and K.~Raschel, \emph{Weighted lattice
  walks and universality classes}, J. Combin. Theory Ser. A \textbf{152}
  (2017), 255--302.

\bibitem[CS12]{Chen_Singer12}
Shaoshi Chen and Michael~F. Singer, \emph{Residues and telescopers for
  bivariate rational functions}, Adv. in Appl. Math. \textbf{49} (2012), no.~2,
  111--133. \MR{2946428}

\bibitem[DH19]{dreyfus2019length}
Thomas Dreyfus and Charlotte Hardouin, \emph{Length derivative of the
  generating series of walks confined in the quarter plane}, arXiv:1902.10558
  (2019).

\bibitem[DHRS18]{DHRS}
Thomas Dreyfus, Charlotte Hardouin, Julien Roques, and Michael~F. Singer,
  \emph{On the nature of the generating series of walks in the quarter plane},
  Inventiones mathematicae (2018), 139--203.

\bibitem[DHRS19]{DHRS19}
Thomas Dreyfus, Charlotte Hardouin, Julien Roques, and Michael~F Singer,
  \emph{Walks on the quarter plane, genus zero case}, to appear in Journal of
  Combinatorial Theory A, 2019.

\bibitem[DHRS20]{DHRS20}
Thomas Dreyfus, Charlotte Hardouin, Julien Roques, and Michael~F. Singer,
  \emph{On the kernel curves associated with walks in the quarter plane},
  preprint, arXiv:2004.01035, 2020.

\bibitem[DR19]{DreyRasc}
Thomas Dreyfus and Kilian Raschel, \emph{Differential transcendence and
  algebraicity criteria for the series counting weighted quadrant walks},
  Publications mathematiques de Besancon (2019), no.~1, 41--80.

\bibitem[Dui10]{DuistQRT}
J.~Duistermaat, \emph{Discrete integrable systems: {QRT} maps and elliptic
  surfaces}, Springer Monographs in Mathematics, vol. 304, Springer-Verlag, New
  York, 2010.

\bibitem[FIM17]{FIM}
Guy {Fayolle}, Roudolf {Iasnogorodski}, and Vadim {Malyshev}, \emph{{Random
  walks in the quarter plane. Algebraic methods, boundary value problems,
  applications to queueing systems and analytic combinatorics. 2nd edition,
  previously published with the subtitle Algebraic methods, boundary value
  problems and applications.}}, vol.~40, Cham: Springer, 2017 (English).

\bibitem[HS20]{HSurl}
Charlotte Hardouin and Michael~F. Singer, \emph{Maple code for {IB.6} and
  {IIC.2} linked to {On differentially algebraic generating series for walks in
  the quarter plane}}, \url{https://singer.math.ncsu.edu/ms_papers.html},
  Spetember 2020.

\bibitem[Hum10]{KH2010}
Katherine Humphreys, \emph{A history and a survey of lattice path enumeration},
  J. Statist. Plann. Inference \textbf{140} (2010), no.~8, 2237--2254.
  \MR{2609483}

\bibitem[Ish98]{Ishi}
Katsuya Ishizaki, \emph{Hypertranscendency of meromorphic solutions of a linear
  functional equation}, Aequationes Math. \textbf{56} (1998), no.~3, 271--283.
  \MR{1639233}

\bibitem[JTZ17]{JiangTavakoliZhao}
R.~Jiang, J.~Tavakoli, and Y.~Zhao, \emph{An upper bound and criteria for the
  galois groupof weighted walks with rational coefficients in thequarter
  plane}, arXiv:2008.11101 (2017).

\bibitem[Kod64]{Kodaira1}
K.~Kodaira, \emph{On the structure of compact complex analytic surfaces. {I}},
  Amer. J. Math. \textbf{86} (1964), 751--798. \MR{187255}

\bibitem[Kod66]{Kodaira2}
\bysame, \emph{On the structure of compact complex analytic surfaces. {II}},
  Amer. J. Math. \textbf{88} (1966), 682--721. \MR{205280}

\bibitem[KR12]{KurkRasch}
Irina Kurkova and Kilian Raschel, \emph{On the functions counting walks with
  small steps in the quarter plane}, Publ. Math. Inst. Hautes \'Etudes Sci.
  \textbf{116} (2012), 69--114. \MR{3090255}

\bibitem[KY15]{KauersYatchak}
Manuel Kauers and Rika Yatchak, \emph{Walks in the quarter plane with multiple
  steps}, Proceedings of {FPSAC} 2015, Discrete Math. Theor. Comput. Sci.
  Proc., Assoc. Discrete Math. Theor. Comput. Sci., Nancy, 2015, pp.~25--36.

\bibitem[MM14]{MelcMish}
Stephen Melczer and Marni Mishna, \emph{Singularity analysis via the iterated
  kernel method}, Combin. Probab. Comput. \textbf{23} (2014), no.~5, 861--888.
  \MR{3249228}

\bibitem[MR09]{MR09}
Marni Mishna and Andrew Rechnitzer, \emph{Two non-holonomic lattice walks in
  the quarter plane}, Theoret. Comput. Sci. \textbf{410} (2009), no.~38-40,
  3616--3630. \MR{2553316}

\bibitem[N\'64]{Neron}
Andr\'{e} N\'{e}ron, \emph{Mod\`eles minimaux des vari\'{e}t\'{e}s
  ab\'{e}liennes sur les corps locaux et globaux}, Inst. Hautes \'{E}tudes Sci.
  Publ. Math. (1964), no.~21, 128. \MR{179172}

\bibitem[OS91]{OgSh91}
Keiji Oguiso and Tetsuji Shioda, \emph{The {M}ordell-{W}eil lattice of a
  rational elliptic surface}, Comment. Math. Univ. St. Paul. \textbf{40}
  (1991), no.~1, 83--99. \MR{1104782}

\bibitem[{Sha}13]{Shafa1}
Igor~R. {Shafarevich}, \emph{{Basic algebraic geometry. 1: Varieties in
  projective space. Transl. from the Russian by Miles Reid. 3rd ed.}}, 3rd ed.
  ed., Berlin: Springer, 2013.

\bibitem[Shi90]{Shioda90}
Tetsuji Shioda, \emph{On the {M}ordell-{W}eil lattices}, Comment. Math. Univ.
  St. Paul. \textbf{39} (1990), no.~2, 211--240. \MR{1081832}

\bibitem[Sil94]{Silverman94}
Joseph~H. Silverman, \emph{Advanced topics in the arithmetic of elliptic
  curves}, Graduate Texts in Mathematics, vol. 151, Springer-Verlag, New York,
  1994. \MR{1312368}

\bibitem[SS10]{SchuttShioda}
Matthias Sch\"{u}tt and Tetsuji Shioda, \emph{Elliptic surfaces}, Algebraic
  geometry in {E}ast {A}sia---{S}eoul 2008, Adv. Stud. Pure Math., vol.~60,
  Math. Soc. Japan, Tokyo, 2010, pp.~51--160. \MR{2732092}

\bibitem[SS19]{SchuttShiodaBook}
\bysame, \emph{Mordell-{W}eil lattices}, Ergebnisse der Mathematik und ihrer
  Grenzgebiete. 3. Folge. A Series of Modern Surveys in Mathematics [Results in
  Mathematics and Related Areas. 3rd Series. A Series of Modern Surveys in
  Mathematics], vol.~70, Springer, Singapore, 2019.

\bibitem[Tat75]{Tate}
J.~Tate, \emph{Algorithm for determining the type of a singular fiber in an
  elliptic pencil}, Modular functions of one variable, {IV} ({P}roc.
  {I}nternat. {S}ummer {S}chool, {U}niv. {A}ntwerp, {A}ntwerp, 1972), 1975,
  pp.~33--52. Lecture Notes in Math., Vol. 476. \MR{0393039}

\bibitem[Tsu04]{Tsuda}
Teruhisa Tsuda, \emph{Integrable mappings via rational elliptic surfaces}, J.
  Phys. A \textbf{37} (2004), no.~7, 2721--2730.

\bibitem[WZ90]{WZ90}
Herbert~S. Wilf and Doron Zeilberger, \emph{Rational functions certify
  combinatorial identities}, J. Amer. Math. Soc. \textbf{3} (1990), no.~1,
  147--158. \MR{1007910}

\end{thebibliography}
\end{document}